\documentclass[11pt,a4paper]{amsart}
\usepackage{amssymb}
\usepackage{mathrsfs}
\usepackage{microtype}

\usepackage{xcolor}
\definecolor{linkblue}{RGB}{38,68,105}
\definecolor{citegreen}{RGB}{45,88,68}

\usepackage[colorlinks=true,linkcolor=linkblue,citecolor=citegreen,urlcolor=linkblue,pdfborder={0 0 0}]{hyperref}

\usepackage{tikz}
\usetikzlibrary{arrows.meta,calc}

\newtheorem{theorem}{Theorem}[section]
\newtheorem{proposition}[theorem]{Proposition}
\newtheorem{corollary}[theorem]{Corollary}
\newtheorem{lemma}[theorem]{Lemma}
\theoremstyle{definition}

\newtheorem{conjecture}[theorem]{Conjecture}
\newtheorem{definition}[theorem]{Definition}
\theoremstyle{remark}
\newtheorem{remark}[theorem]{Remark}
\numberwithin{equation}{section}

\newcommand{\C}{\mathbb C}
\newcommand{\R}{\mathbb R}
\newcommand{\PP}{\mathbb P}
\newcommand{\HH}{\mathbb H}
\newcommand{\A}{\mathcal A}
\newcommand{\X}{\mathcal X}
\newcommand{\K}{\mathcal K}
\DeclareMathOperator{\SL}{SL}
\DeclareMathOperator{\SU}{SU}
\DeclareMathOperator{\PSL}{PSL}
\DeclareMathOperator{\PSU}{PSU}
\DeclareMathOperator{\Hol}{Hol}
\DeclareMathOperator{\Hom}{Hom}
\DeclareMathOperator{\tr}{tr}
\DeclareMathOperator{\Fix}{Fix}
\DeclareMathOperator{\arsinh}{arsinh}
\DeclareMathOperator{\Sph}{Sph}
\DeclareMathOperator{\Pic}{Pic}
\DeclareMathOperator{\divisor}{div}
\DeclareMathOperator{\Gr}{Gr}

\newcommand{\Z}{\mathbb Z}
\newcommand{\norm}[1]{\lVert #1\rVert}

\title[Finiteness of Fuchsian equations with unitarizable monodromy]{Finiteness of Fuchsian equations with unitarizable monodromy on compact Riemann surfaces}

\author{Sheng Tan}

\address{School of Mathematical Sciences, Capital Normal University, Beijing 100048, China} 
\email{tansheng2018@outlook.com, tansheng@cnu.edu.cn}

\subjclass[2020]{Primary 34M35; Secondary 30F30, 30F45, 20E08}
\keywords{Fuchsian equations, accessory parameters, unitarizable monodromy, spherical conical metrics, projective connections, character varieties, real trees}
\date{}

\thanks{This work is supported by NSFC (Grant No.~12601116) and BJNSF (Grant No.~1264052).}

\begin{document}

\begin{abstract}
We prove that, on any compact Riemann surface, there are only finitely many meromorphic projective connections with prescribed singular points and real exponent differences whose projective monodromy is unitarizable. We also establish properness and growth estimates for holonomy, including a relative version for smooth marked families, and derive finiteness results for spherical conical metrics and ramification divisors of stable extensions.
\end{abstract}

\maketitle



\section{Introduction}\label{sec:introduction}

The monodromy of a Fuchsian differential equation depends not only on its singular points and local exponents, but also on its accessory parameters. For second-order equations, one can ask whether there are only finitely many choices of these parameters for which the projective monodromy is unitarizable. This question is closely related to spherical metrics with conical singularities \cite{Ere20,Ere21,Ere24}.

Let $X$ be a compact connected Riemann surface of genus $g$, let $D=\{a_1,\ldots,a_n\}$ be a finite set of distinct points, and let $\boldsymbol\beta=(\beta_1,\ldots,\beta_n)\in\R^n$. We consider equations in projective normal form
\begin{equation}\label{eq:intro-equation}
 y''+Q_z y=0.
\end{equation}
The local coefficients $Q_z$ form a meromorphic projective connection. The prescribed exponent differences mean that, in a coordinate centered at $a_i$,
\begin{equation}\label{eq:intro-principal}
 Q_z(z)=\frac{1-\beta_i^2}{4z^2}+O(z^{-1}).
\end{equation}
The projective monodromy is \emph{unitarizable} if it is conjugate to a subgroup of $\PSU(2)$. Throughout the paper, we fix the complex structure of $X$, the marked points $D$, and the exponent differences.

Eremenko formulated the following conjecture for arbitrary compact Riemann surfaces \cite[Section~2]{Ere20}. In a later problem note, he also raised the questions of discreteness and finiteness for the sphere \cite{Ere21}.

\begin{conjecture}[{Eremenko, \cite{Ere20}}]\label{question:eremenko}
On any compact Riemann surface, the set of Fuchsian equations with prescribed singularities and exponents, and with unitarizable monodromy is finite.
\end{conjecture}

For three marked points on the sphere the conjecture is trivial. For four points, Eremenko proved finiteness by passing to the elliptic form of Heun's equation \cite[Theorem~1]{Ere20}. His proof is based on asymptotics of monodromy along two elliptic periods together with an argument for discreteness using entire functions of one complex variable. However, this argument does not extend to equations with more than four singular points. Our first main result resolves the conjecture in the stated generality.

\begin{theorem}\label{thm:unitary}
Let $X$ be a compact connected Riemann surface, let $D=\{a_1,\ldots,a_n\}\subset X$ be a finite set of distinct points, and let $\beta_1,\ldots,\beta_n\in\R$. There are only finitely many meromorphic projective connections that are holomorphic outside $D$, have poles of order at most two along $D$, have coefficients $(1-\beta_i^2)/4$ of the $z^{-2}$ terms at $a_i$, and have unitarizable projective monodromy.
\end{theorem}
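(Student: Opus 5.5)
The plan is to combine an algebraic structure statement with a compactness statement. The set $\mathcal P$ of admissible projective connections is an affine space over $H^0(X,K_X^2(D))$ (quadratic differentials with at most simple poles on $D$); it has complex dimension $N=3g-3+n$ when this is positive. The holonomy map $\Hol\colon\mathcal P\to\X$ goes to the $\PSL(2,\C)$ character variety of $\pi_1(X\setminus D)$. By the Ehresmann--Thurston principle and the work of Gallo--Kapovich--Marden, $\Hol$ is holomorphic, and it is a local immersion at the level of framed deformations. The locus $\mathcal U\subset\mathcal P$ with unitarizable monodromy is the preimage of the compact real-algebraic subset of $\PSU(2)$-characters, intersected with the fixed local conjugacy classes $\exp(\pm\pi i\beta_i)$.

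\textbf{Step 1 (properness).} Show that the holonomy is proper in the following sense. If $q\to\infty$ in $\mathcal P$, then $\max_{\gamma\in S}|\tr\Hol(q)(\gamma)|\to\infty$ for a fixed finite generating set $S$. Unitarizable monodromy has $|\tr|\le2$, so Step 1 implies that $\mathcal U$ is compact.

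To prove Step 1, rescale: write $q=q_0+t\phi$ with $\|\phi\|=1$ and $t\to\infty$. Then use WKB asymptotics along the horizontal trajectories of $\phi$. On a saddle connection or closed trajectory of $\sqrt{t}\,\sqrt{\phi}$, the trace grows like $\exp\bigl(\sqrt t\,|\mathrm{Re}\int_\gamma\sqrt{\phi}|\bigr)$. Since $\phi\ne0$ has nonzero periods of $\sqrt\phi$ on the spectral double cover, some loop $\gamma\in S$ (or a bounded word in $S$) has trace that blows up. Alternatively, and more robustly, take a limiting action on a real tree (Morgan--Shalen / Culler--Shalen compactification). Its length function is given by the measured foliation of $\mathrm{Re}\sqrt{\phi}$, so it is nonzero, and that forces traces to infinity.

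I expect this step to be the main obstacle. The delicate points are uniformity in the direction $\phi$, the contribution of the poles on $D$ (exponents $\beta_i$ real and fixed, so only the subleading simple pole is free), and the existence of a loop whose period is nonzero. I would first try the real-tree approach, because it handles all directions at once by compactness of the unit sphere in $H^0(K^2(D))$.

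\textbf{Step 2 (discreteness).} Show that $\mathcal U$ is discrete. $\mathcal U$ is a real-analytic subset of $\mathcal P$, and we know it is compact, so it suffices to show that it contains no curve. Suppose a real-analytic arc $q_s$ lies in $\mathcal U$. The monodromy lies in a totally real slice $\PSU(2)$-character locus of real dimension $N$ inside the complex $N$-dimensional relative character variety. The complexification of the arc gives a holomorphic disc $q_{s}$, $s\in\C$ near the arc. Its image under $\Hol$ is a holomorphic curve that meets the totally real locus along a real arc, and this should force the real-analytic invariants (for example the unitary metric) to behave pluriharmonically. A cleaner route: the corresponding spherical conical metrics $e^{2u_s}$ vary real-analytically with fixed angles. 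Differentiating the Liouville equation $\Delta u+e^{2u}=0$ in $s$ gives $\dot q$ as a holomorphic quadratic differential built from $\dot u$. Combining the $s$ and $is$ directions, one gets that $\mathrm{Hol}$ would map a complex disc into the compact totally real set. A nonconstant holomorphic map into $\PSU(2)$-characters is impossible, because traces would be holomorphic and real-valued with bounded modulus, hence constant along the disc. Local injectivity of $\Hol$ (the immersion property) then makes the disc constant, which is a contradiction.

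To make this rigorous, I would apply the identity theorem to the holomorphic functions $\tr\Hol(q)(\gamma)$ restricted to a complex line through the arc. They are real-valued on the real arc, but that alone is insufficient, so the key is that the whole complex curve $\Hol^{-1}$ of a real-algebraic set must be analyzed via the Łojasiewicz structure. Concretely: $\mathcal U$ is compact real-analytic; if it had positive dimension, take the Zariski closure of its image, a positive-dimensional irreducible component. Its complexification lies in the image of $\Hol$ and contains a compact real form. Then invoke the maximum principle on bounded holomorphic traces along an algebraic curve through it, using Step 1 to see they are bounded. Compactness plus discreteness then yields finiteness.
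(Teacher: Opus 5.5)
Your Step 1 follows the paper's route: rescaled limiting actions on an $\R$-tree, with properness coming from a nonzero length function. You would still have to prove that the length function is nonzero when simple poles are present. The paper does this without identifying it with a measured foliation. If all lengths vanished there would be a global fixed point $a$, and $d_T(f,a)^2$ would extend to a subharmonic function on the compact curve. It would then be constant, contradicting the local interval model.

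The genuine gap is Step 2: none of your three arguments excludes a real-analytic arc of unitary connections.
\begin{itemize}
\item Nothing forces the complexified family $q_s$, $s\in\C$, to keep unitary monodromy. The traces are real on the real arc and merely holomorphic off it. Differentiating the Liouville equation produces a real tangent vector $\dot q$, but nothing makes $i\dot q$ tangent to the unitary locus. So the contradiction with a holomorphic disc mapped into the $\PSU(2)$-locus never arises, and local injectivity of $\Hol$ is beside the point.
\item The Zariski-closure argument fails too. Step 1 gives ``bounded traces $\Rightarrow$ bounded $Q$'', not boundedness of traces on the complexification. On any closed positive-dimensional analytic subset of the affine character variety the trace functions cannot all be bounded, so the maximum principle yields nothing.
\item Containing a compact totally real arc is no contradiction. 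A complex analytic image and the totally real unitary locus can meet along real curves, just as $\{w=0\}$ meets $\R^2$ in $\C^2$.
\end{itemize}
Some input specific to holonomy is required.

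The paper supplies this input in two steps. First, it turns the real-analytic problem into a complex-analytic one. Unitary traces are real and $F_{\overline X}(\overline Q)=\overline{F_X(Q)}$, so $Q\mapsto(Q,\overline Q)$ embeds the compact set $F_X^{-1}(K^{\mathrm{tr}})$ into the complex analytic relation $\widetilde Z=\{(Q,R):F_X(Q)=F_{\overline X}(R)\}\subset\A_X\times\A_{\overline X}$.

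Second, it proves that $\widetilde Z$ is closed and discrete by controlling it at infinity. Along an escaping sequence in $\widetilde Z$, the two rescaled tree actions have the same length function. Tree rigidity then forces the limiting differentials to satisfy $p=\overline q$: in the nonabelian case via the Culler--Morgan identification of minimal subtrees plus the constant-distance subharmonicity argument, and in the abelian case via harmonic periods. Hence all directions at infinity lie in a totally real subspace. The strictly plurisubharmonic function $|\operatorname{Im}\xi|^2-\varepsilon|\operatorname{Re}\xi|^2$ therefore tends to $-\infty$ on $\widetilde Z$, and it would attain an interior maximum on any positive-dimensional component, which is impossible. A compact subset of a closed discrete set is finite.

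Without this comparison at infinity, or some substitute that controls the complex-analytic hull of the unitary locus, your Step 2 does not go through.
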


The geometry of holonomy provides a stronger formulation. Let $\boldsymbol\delta=(\delta_1,\ldots,\delta_n)$ be an arbitrary tuple of complex numbers, and let $\A_X=\A(X,D,\boldsymbol\delta)$ denote the space of meromorphic projective connections on $X$ that are holomorphic outside $D$ and have coefficient $\delta_i$ of the $z^{-2}$ term at each $a_i$. When nonempty, it is an affine space modeled on
\[
 V=H^0(X,K_X^{\otimes2}(D)),
\]
where $K_X$ is the canonical bundle. Fix a basepoint $x_0\in X\setminus D$, denote $\Gamma=\pi_1(X\setminus D,x_0)$, and fix a theta characteristic on $X$, namely a holomorphic line bundle whose square is $K_X$. One can define a  holonomy map
\[
 \Hol_X:\A_X\longrightarrow\X(\Gamma),
\]
by a basis of local solutions normalized to have Wronskian one, where $\X(\Gamma)$ is the affine $\SL_2(\C)$ character variety. Let $\K$ be the image of $\Hom(\Gamma,\SU(2))$ in this variety. Let $\overline X$ be the conjugate Riemann surface of $X$, equipped with the conjugate marked data and theta characteristic. We take the identity map on the underlying topological surface as the marking and write $\A_{\overline X}$ for the corresponding accessory space. Our second main result is a stronger statement about holonomy. 

\begin{theorem}\label{thm:holonomy}
Suppose $\A_X$ is nonempty. For arbitrary fixed complex coefficients $\boldsymbol\delta$, the map $\Hol_X$ is proper, and the complex analytic relation
\begin{equation}\label{eq:intro-relation}
 Z=\{(Q,R)\in\A_X\times\A_{\overline X}:\Hol_X(Q)=\Hol_{\overline X}(R)\}
\end{equation}
is closed and discrete. Moreover, $\Hol_X^{-1}(\K)$ is finite.
\end{theorem}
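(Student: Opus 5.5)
The plan has three parts. First prove properness of $\Hol_X$ by a growth estimate. Then deduce that $Z$ is closed and discrete, using properness together with the fact that $\Hol_X$ and $\Hol_{\overline X}$ vary in opposite holomorphic directions. Finally obtain finiteness of $\Hol_X^{-1}(\K)$ by embedding it into $Z$ through the antiholomorphic involution of the character variety.

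\textbf{Properness.} Let $Q_k=Q_0+q_k$ be a sequence in $\A_X$ with $q_k\in V$ and $\norm{q_k}\to\infty$. Write $q_k=t_k\phi_k$ with $t_k\to\infty$ and $\phi_k\to\phi\neq0$ in the unit sphere of $V$. After rescaling, $\phi$ is a meromorphic quadratic differential with at most simple poles on $D$. Its horizontal foliation (equivalently its vertical one, or a rotation $e^{i\theta}\phi$) defines a dual real tree $T_\phi$ with an isometric action of $\Gamma$; this action is nontrivial because $\phi\neq 0$. A WKB analysis along a closed curve $\gamma$ that is quasi-transverse to the foliation should give
\[
 \log|\tr\Hol(Q_k)(\gamma)|=\sqrt{t_k}\,\ell_\phi(\gamma)+o(\sqrt{t_k}),
\]
where $\ell_\phi(\gamma)=\bigl|\mathrm{Re}\int_\gamma\sqrt{\phi}\bigr|$ is the translation length in $T_\phi$. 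Near $D$ the fixed $z^{-2}$ terms contribute only $O(\log)$, since the poles of $\phi$ are simple, and this is negligible. Because some $\gamma$ has $\ell_\phi(\gamma)>0$, some trace diverges. Hence holonomy leaves every compact set, which is properness.

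\textbf{Closedness and discreteness of $Z$.} $Z$ is the preimage of the diagonal under the continuous map $\Hol_X\times\Hol_{\overline X}$, so it is closed. Properness of both factors gives that the fibres of $Z$ over compact sets are compact, so $Z$ is an analytic subset of $\A_X\times\A_{\overline X}$ that is proper over the character variety. Suppose $Z$ had a positive-dimensional component $C$. Since each $\Hol$ has discrete fibres (a consequence of properness plus affineness: a compact analytic subvariety of an affine space is finite), $C$ would map to a positive-dimensional analytic set in $\X(\Gamma)$ along which the holonomy of $Q$ and of $R$ agree. Take a holomorphic curve in $C$ whose two coordinates leave every compact set, or else move to infinity using properness. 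Apply the tree estimate to both coordinates along this curve. The asymptotic length functions must then coincide: $\ell_\phi=\ell_{\psi}$ for $\phi$ on $X$ and $\psi$ on $\overline X$. The limiting holonomies are, respectively, holomorphic and antiholomorphic in the natural parameter, and the corresponding tree data $\mathrm{Re}\sqrt{\phi}$ and $\mathrm{Re}\sqrt{\psi}$ are forced to be compatible. By a Gardiner--Masur / Hubbard--Masur type uniqueness (a measured foliation determines the quadratic differential on a fixed surface), $\psi$ is determined by $\phi$. The opposite complex structures then give a contradiction via the phase: rotating $\phi\mapsto e^{i\theta}\phi$ rotates $\psi$ by $e^{-i\theta}$. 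Alternatively, $C$ would be a compact analytic curve in an affine space, contradicting the fact that such curves are finite. \emph{This step is the main obstacle.} I expect to need a uniform version of the WKB estimate over all directions $\theta$ simultaneously.

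\textbf{Finiteness.} Complex conjugation $\rho\mapsto\bar\rho$ defines an antiholomorphic involution $\sigma$ of $\X(\Gamma)$. Its fixed set contains $\K$, because $\SU(2)$ characters have real traces. Conjugating the equation gives $\Hol_{\overline X}(\overline Q)=\sigma(\Hol_X(Q))$. Hence for $Q\in\Hol_X^{-1}(\K)$ the pair $(Q,\overline Q)$ lies in $Z$. On the other hand, $\K$ is compact, so properness makes $\Hol_X^{-1}(\K)$ compact. Its image in $Z$ is then a compact subset of a closed discrete set, and is therefore finite.
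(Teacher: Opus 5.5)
There is a genuine gap at the end of your discreteness step. Grant everything before it: the trace asymptotics, equal length functions at a common scale, and Hubbard--Masur on $\overline X$. What you obtain is only that every limit of normalized points $(Q_j-Q_0,\,R_j-\overline{Q_0})/T_j$ of $Z$ has the form $(q,\overline q)$. In other words, all directions at infinity lie in the totally real subspace $\{(a,b):b=\overline a\}$ of real dimension $2m$. This is exactly the intermediate conclusion the paper reaches. Your phase-rotation ``contradiction'' assumes that the set of directions at infinity of a component $C$ is invariant under $(q,p)\mapsto(e^{i\theta}q,e^{i\theta}p)$, i.e.\ that $C$ is asymptotically a complex cone. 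That holds for algebraic curves, which have a tangent cone at infinity. It is not available for $Z$, which is a transcendental analytic set cut out by entire trace functions; nothing prevents its ends from winding at scales much smaller than $|\xi|$. Your alternative, that $C$ would be compact, is unsupported: no bound on $C$ is available, and producing one is the whole point. A uniform WKB estimate over all $\theta$ does not supply this missing step.

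What is needed is a maximum-principle argument that turns the totally real direction condition into discreteness. The paper's Lemma~\ref{lem:psh} does this. The complex-linear change $z=(a+b)/2$, $w=(a-b)/(2i)$ sends that subspace to $\R^{2m}$, and then $B(\xi)=|\operatorname{Im}\xi|^2-\varepsilon|\operatorname{Re}\xi|^2$ with $0<\varepsilon<1$ is strictly plurisubharmonic. The direction condition gives $B\le-\varepsilon|\xi|^2/2$ on the set outside a large ball, so $B$ attains a maximum on any positive-dimensional component. Restricting $B$ to a nonconstant holomorphic disc through that point contradicts strict plurisubharmonicity.

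Two smaller points. First, your formula $\ell_\phi(\gamma)=|\operatorname{Re}\int_\gamma\sqrt\phi|$ is wrong in general: $\sqrt\phi$ has no global branch, and the translation length is an intersection number with the measured foliation. With $y''+Qy=0$ the relevant transverse measure is $|\operatorname{Im}\sqrt\phi|$, and a period formula holds only when the tree is a line. Second, the quasi-transverse trace asymptotics, their identification with the dual tree, and Hubbard--Masur for simple poles on $\overline X$ are asserted rather than proved. That route is a legitimate alternative. The paper instead proves what it needs directly: it builds an ultralimit tree with an equivariant map locally equal to $\iota\circ\operatorname{Im}\int\sqrt q$, gets nontriviality from a subharmonic maximum principle, and compares the two limits via Culler--Morgan rigidity plus a constant-distance argument, or via Busemann functions in the abelian case. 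Your finiteness step is correct and equivalent to the paper's use of $\widetilde Z$ and $K^{\mathrm{tr}}$.
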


In particular, the last conclusion also applies to connections with nonsemisimple monodromy whose semisimplifications are unitary. It also follows from properness that every complex character has finitely many realizations and that holonomy is finite onto its reduced closed analytic image. Moreover, the images associated with $X$ and $\overline X$ have a closed discrete intersection; see Corollaries~\ref{cor:finite-holonomy} and~\ref{cor:image-intersection}. We also establish relative properness of holonomy for local holomorphic families of smooth marked curves with disjoint marked sections, fixed local coefficients, and compatible affine families of accessory spaces. On compact subsets of the base, the logarithmic size of finitely many monodromy traces grows like the square root of the accessory norm. These estimates, together with restricted real analyticity, give a uniform finite bound for the number of connections with unitary characters over each such compact subset; see Proposition~\ref{prop:relative-properness} and Corollary~\ref{cor:uniform-count}.

There are several earlier works relevant to this approach. Culler--Shalen and Morgan--Shalen related degenerations of representations to actions on trees \cite{CS83,MS84}. Such limiting actions can also be obtained analytically through equivariant harmonic maps \cite{DDW98}. For a fixed closed curve of genus at least two, Gallo--Kapovich--Marden proved that holonomy is a proper embedding \cite{GKM00}. Dumas used trees of limiting quadratic differentials to obtain two-sided estimates for holonomy growth and to reconstruct the limiting differential in the abelian case as the square of a holomorphic one-form \cite{Dum17}. He also used limiting directions to study intersections in character varieties \cite{Dum15}. Our argument establishes the corresponding comparisons for regular-singular equations, where the limiting quadratic differential initially has at most simple poles.

Discreteness of holonomy intersections is related to real projective structures. Faltings studied these structures with prescribed cusp and orbifold data \cite{Fal83}, and Luo proved local results for monodromy maps on punctured surfaces \cite{Luo93}. Baba proved discreteness of intersections of Poincar\'e holonomy varieties for distinct marked closed conformal structures, including opposite orientations \cite{Bab23}. Recent work of Dimakis--Wentworth concerns discreteness of real opers for special linear groups in the moduli space of irreducible flat connections \cite{DW26}.

We next consider spherical metrics with conical singularities. If $f$ is a developing map, the corresponding metric is
\begin{equation}\label{eq:spherical-metric}
 ds^2=\frac{4|f'|^2}{(1+|f|^2)^2}|dz|^2,
\end{equation}
and positive exponent differences $\beta_i$ correspond to cone angles $2\pi\beta_i$. Two metrics are projectively equivalent if their developing maps $f_1$ and $f_2$ satisfy $f_2=M\circ f_1$ for a M\"obius transformation $M$ of the target sphere. By Theorem~\ref{thm:unitary}, there are only finitely many projective equivalence classes for any prescribed tuple of positive cone angles. The description of a projective class in terms of invariant Hermitian forms \cite{Ere20,Ere24} shows that every noncoaxial class contains exactly one metric. We establish an obstruction to coaxial monodromy in terms of divisors on arbitrary compact Riemann surfaces and deduce finiteness of spherical metrics for positive nonintegral angles when $2g-2+n>0$.

Existence and uniqueness of these metrics have been studied by analytic and geometric methods. Troyanov developed existence criteria for prescribed curvature and conical singularities \cite{Tro91}; Luo--Tian proved uniqueness on the sphere with at least three cone points and all angles less than $2\pi$ \cite{LT92}, with another proof provided by the singular sphere-covering inequality \cite{BGJ+19}. Mondello--Panov obtained angle constraints and properness under a nonbubbling condition \cite{MP16,MP19}. Mazzeo--Zhu developed the deformation theory for larger angles \cite{MZ22}, and Huang--Lin related unitarizable Fuchsian equations to degree formulas for curvature equations \cite{HL25}. Reducible metrics and their character differentials were studied by Chen--Wang--Wu--Xu and Wei--Wu--Xu \cite{CWW+15,WWX26}.

For integral angles, the applications connect with stable vector bundles. Li--Song--Xu constructed a correspondence between noncoaxial metrics and stable extensions of line bundles, and proved finiteness for almost every effective divisor of odd degree greater than $2g-2$ \cite{LSX21}. We obtain finiteness for every such divisor. Existence is provided by Troyanov and Bartolucci--De Marchis--Malchiodi \cite{Tro91,BDM+11,DM10}; the elliptic case, including an algebraic counting bound, is already covered by Wang \cite{Wang21}. Using the result of Feng--Song--Xu on injectivity for fixed line bundles \cite{FSX26}, we also prove that the ramification divisor map, which is real analytic, has finite fibers and that its differential has maximal real rank on an open dense subset. 

The paper is organized as follows. In Section~\ref{sec:holonomy} we introduce projective connections, accessory spaces, and holonomy. Section~\ref{sec:trees} develops the local estimates and tree limits, proves properness and trace growth, and treats smooth marked families. In Section~\ref{sec:comparison} we compare limiting quadratic differentials, using tree rigidity and harmonic periods. Section~\ref{sec:finiteness} proves the finiteness theorems and establishes uniform counting in smooth marked families. Section~\ref{sec:geometric} presents the applications to spherical conical metrics, stable extensions, and meromorphic functions.


\section{Projective connections and holonomy}\label{sec:holonomy}

In this section, we introduce projective connections, accessory spaces, holonomy maps, and the associated trace functions, following \cite{Luo93,Dum17,Gol09}.

 
\subsection{Accessory spaces}\label{subsec:accessory}

Let $X$ be a compact connected Riemann surface of genus $g$, let $D=\{a_1,\ldots,a_n\}$ be a finite marked set, and let $K_X$ denote the canonical line bundle. We also use $D$ for the reduced divisor $\sum_i a_i$. For each $a_i\in D$, let $\delta_i$ be the prescribed coefficient of $z^{-2}$ in the local expansion of $Q_z$, and set $\boldsymbol\delta=(\delta_1,\ldots,\delta_n)\in\C^n$.

\begin{definition}[Projective connections and accessory spaces]\label{def:projective}
A meromorphic projective connection on $X$ is a collection of meromorphic local coefficients $Q_z$ satisfying
\begin{equation}\label{eq:projective-change}
 Q_w(w)=Q_z(z(w))\left(\frac{dz}{dw}\right)^2+\frac12\mathcal S(z)(w),
\end{equation}
where $\mathcal S(f)=f^{(3)}/f'-\tfrac32(f''/f')^2$ is the Schwarzian derivative. The \emph{accessory space} $\A_X=\A(X,D,\boldsymbol\delta)$ consists of the projective connections holomorphic outside $D$ with local expansion
\[
 Q_z(z)=\delta_i z^{-2}+O(z^{-1})
\]
in a coordinate centered at $a_i$. Affine coordinates on this space are called \emph{accessory parameters}.
\end{definition}

The coefficient $\delta_i$ is independent of the centered coordinate: if $z=cw+O(w^2)$, then $z^{-2}(dz/dw)^2=w^{-2}+O(w^{-1})$, and the Schwarzian term is holomorphic. The difference of two elements of $\A_X$ is a quadratic differential with at most simple poles along $D$ \cite{Luo93,Dum17}. Thus, if $\A_X$ is nonempty, choosing $Q_0\in\A_X$ gives
\begin{equation}\label{eq:affine-space}
 \A_X=Q_0+V,\quad V=H^0(X,K_X^{\otimes2}(D)).
\end{equation}
We write $m=\dim_\C V$ for the dimension of $\A_X$.

We first discuss existence and the cases with at most two marked points on the sphere. Every compact Riemann surface has a holomorphic projective connection, obtained from its uniformizing projective charts. For $n>0$, consider the exact sequence
\[
 0\longrightarrow K_X^{\otimes2}(D)\longrightarrow K_X^{\otimes2}(2D)
 \longrightarrow\bigoplus_{i=1}^n\C_{a_i}\longrightarrow0,
\]
where the last map extracts the coefficients of the $z^{-2}$ terms. By Serre duality,
\[
 H^1(X,K_X^{\otimes2}(D))\simeq H^0(X,K_X^{-1}(-D))^*.
\]
The right side vanishes for $g\ge2$, for $g=1,n\ge1$, and for $g=0,n\ge3$. Hence any prescribed coefficients are realized in these cases. Combining Serre duality with Riemann--Roch \cite{For81}, we obtain
\begin{equation}\label{eq:dimension}
 m=\begin{cases}
  \max\{n-3,0\},&g=0,\\
  n,&g=1,\ n\ge1,\\
  1,&g=1,\ n=0,\\
  3g-3+n,&g\ge2.
 \end{cases}
\end{equation}
For the sphere with at most two marked points, the conditions for nonemptiness can be stated explicitly. With no marked points, the standard projective connection is the only one. If only infinity is marked, the prescribed coefficient must be zero. If $0$ and infinity are marked, the two prescribed coefficients must agree; in that case, the connection is $Q_z=\delta_1/z^2$. Thus, whenever the accessory space is nonempty, it consists of a single point.

For $n\ge3$, we can describe the accessory space explicitly. After a M\"obius change of coordinate, we may assume $a_n=\infty$ and set $r_0=n-1$. Then
\begin{equation}\label{eq:partial-fractions}
 Q(z)=\sum_{i=1}^{r_0}\left(\frac{\delta_i}{(z-a_i)^2}+\frac{c_i}{z-a_i}\right).
\end{equation}
The condition that infinity be a regular singular point excludes a polynomial part. Since $Q_w=w^{-4}Q(1/w)$ for $w=1/z$, we obtain
\begin{equation}\label{eq:fuchs-relations}
 \sum_{i=1}^{r_0}c_i=0,\quad \sum_{i=1}^{r_0}(\delta_i+a_ic_i)=\delta_n.
\end{equation}
These two relations are independent because the finite marked points are distinct. The coefficient of $z^{-3}$ corresponds to an allowed $w^{-1}$ term at infinity. Thus the vector space of differences is
\begin{equation}\label{eq:quadratic-space}
 V=\left\{\frac{P(z)}{\prod_{i=1}^{r_0}(z-a_i)}\,dz^2:\deg P\le n-4\right\}.
\end{equation}
When $\A_X$ is empty or $m=0$, finiteness and properness are immediate, so we may assume $m>0$ in the sequel.

Fix a basis $e_1,\ldots,e_m$ of $V$ and equip $V$ with the Euclidean norm induced by this basis. On $\overline X$, we take the conjugate origin and basis. Convergence in $V$ implies uniform convergence of all derivatives of the local coefficients on compact subsets of $X\setminus D$. Consequently, after passing to a subsequence, any sequence $(Q_j)$ escaping every compact subset of $\A_X$ can be written as
\begin{equation}\label{eq:normalized}
 Q_j=Q_0+T_jq_j,\quad T_j=\norm{Q_j-Q_0}\longrightarrow\infty,\quad
 \norm{q_j}=1,\quad q_j\longrightarrow q\ne0.
\end{equation}
The limiting quadratic differential $q$ is nonzero, has finitely many zeros, and has at most simple poles at the marked points.

 
\subsection{The scalar equation and its monodromy}\label{subsec:scalar}

Fix a theta characteristic $L$, namely a holomorphic line bundle equipped with an isomorphism $L^{\otimes2}\simeq K_X$. Such a bundle exists on every compact Riemann surface \cite{For81}; on $\PP^1$ it is $\mathcal O(-1)$. Local solutions of the scalar equation are expressed as coefficients of sections of $L^{-1}$. Writing $t=dz/dw$ and choosing the square root according to the transition of $L$, we obtain
\begin{equation}\label{eq:solution-transition}
 y_w=t^{-1/2}y_z,\quad
 (y_w)_w=t^{1/2}(y_z)_z-\frac12t^{-3/2}t_wy_z.
\end{equation}
Substitution into $y_{zz}+Q_zy=0$ gives the equation with coefficient determined by \eqref{eq:projective-change}. The column vector consisting of a solution and its first derivative transforms according to
\begin{equation}\label{eq:jet-transition}
 G=\begin{pmatrix}t^{-1/2}&0\\-t_w/(2t^{3/2})&t^{1/2}\end{pmatrix},\quad \det G=1.
\end{equation}
These are the transition matrices of the first jet bundle $J^1(L^{-1})$, whose determinant is $K_X\otimes L^{-2}\simeq\mathcal O_X$. The choices of signs satisfy the cocycle condition because they come from $L$.

It follows that the equations define a rank-two local system with trivial determinant on $X^\circ=X\setminus D$. Fix a basepoint $x_0\in X^\circ$, a local frame there, and a basis of solutions normalized to have Wronskian one. The resulting representation is
\[
 \rho_Q:\Gamma\longrightarrow\SL_2(\C),
\]
where $\Gamma=\pi_1(X^\circ,x_0)$. 
Its projectivization is the monodromy of the developing map. When we choose a different normalized basis of solutions, we obtain a representation conjugate to $\rho_Q$; when we choose a different theta characteristic, we obtain a representation that differs from $\rho_Q$ by multiplication by a central sign character. In either case, the underlying projective connection is unchanged.

Choose handle loops $A_j,B_j$ and peripheral loops $\gamma_i$ so that the fundamental group
\begin{equation}\label{eq:surface-group}
 \Gamma=\left\langle A_1,B_1,\ldots,A_g,B_g,\gamma_1,\ldots,\gamma_n:
 \prod_{j=1}^g[A_j,B_j]\prod_{i=1}^n\gamma_i=1\right\rangle.
\end{equation}
For $n\ge1$, we eliminate one peripheral loop and denote the resulting free generators by $\sigma_1,\ldots,\sigma_r$, where $r=2g+n-1$. For $n=0$, we use the $r=2g$ handle generators with their relation. We fix this marking throughout.

\begin{definition}[Unitarizable projective monodromy]\label{def:unitarizable-projective-monodromy}
Let $\rho_Q^{\mathrm{proj}}:\Gamma\to\PSL_2(\C)$ be the projectivization of $\rho_Q$. The equation has \emph{unitarizable projective monodromy} if there is $M\in\PSL_2(\C)$ such that
\[
 M\rho_Q^{\mathrm{proj}}(\Gamma)M^{-1}\subset\PSU(2).
\]
\end{definition}

The scalar lift constructed above shows that this condition is equivalent to $\rho_Q$ being conjugate to a representation taking values in $\SU(2)$. Indeed, we can lift $M$ to $\SL_2(\C)$, and the inverse image of $\PSU(2)$ in $\SL_2(\C)$ is $\SU(2)$. This equivalence concerns the representation itself, whereas its character determines only its semisimplification.


\subsection{Characters and trace functions}\label{subsec:traces}

Let
\[
 \X(\Gamma)=\Hom(\Gamma,\SL_2(\C))\mathbin{/\mkern-6mu/}\SL_2(\C)
\]
be the affine character variety, and let $\pi_{\mathrm{ch}}$ be the quotient map. For every $\gamma\in\Gamma$, the function $[\rho]\mapsto\operatorname{tr}(\rho(\gamma))$ is regular on the character variety \cite{CS83,Gol09}. For a nonempty subset $I=\{i_1<\cdots<i_k\}\subset\{1,\ldots,r\}$, put
\begin{equation}\label{eq:finite-traces}
 t_I(\rho)=\tr\bigl(\rho(\sigma_{i_1})\cdots\rho(\sigma_{i_k})\bigr),\quad N=2^r-1.
\end{equation}
For every word in the generators, the corresponding trace function can be expressed as a polynomial with real coefficients in these $N$ functions \cite[Section~5.7]{Gol09}. The same calculation applies when the generators satisfy the standard relation for the fundamental group of a closed surface. To see this, let $C_i\in\SL_2(\C)$, and write $t_i=\tr C_i$ and $t_{ij}=\tr(C_iC_j)$. Then
\begin{equation}\label{eq:trace-identities}
 \begin{split}
 C_i^{-1}&=t_iI-C_i,\\
 C_iC_j+C_jC_i&=t_iC_j+t_jC_i+(t_{ij}-t_it_j)I\quad(i\ne j).
 \end{split}
\end{equation}
We first eliminate inverse letters using the first identity. We then argue by induction on word length and, for words of a fixed length, on the number of inversions in the sequence of indices. The second identity allows us to interchange adjacent factors whose indices are inverted, introducing only terms of smaller length. Once the indices are ordered, we use $C_i^2=t_iC_i-I$ to eliminate repetitions. Taking traces gives the desired polynomial expression. In particular, if the finite collection of traces is bounded, then the trace of every fixed word is bounded.

\begin{definition}[Holonomy and trace maps]\label{def:holonomy}
The \emph{holonomy map} assigns to each $Q\in\A_X$ the character of $\rho_Q$:
\[
 \Hol_X:\A_X\longrightarrow\X(\Gamma).
\]
Let $\tau:\X(\Gamma)\to\C^N$ be the map whose components are the trace functions in \eqref{eq:finite-traces}. The \emph{trace map} and its components are then given by
\begin{equation}\label{eq:trace-map}
 F_X=\tau\circ\Hol_X,\quad F_{X,I}=t_I\circ\Hol_X.
\end{equation}
\end{definition}

Both maps are holomorphic. To prove this for all accessory parameters, we continue normalized solutions along a finite chain of coordinate disks contained in $X\setminus D$ and covering each chosen loop. The coefficient matrix depends holomorphically on the parameters, while the transition matrices in \eqref{eq:jet-transition} are independent of them. Since the Picard series converges locally uniformly, each transfer matrix depends holomorphically on the parameters. Finite products of these matrices give monodromy matrices whose traces are entire functions on $\A_X\simeq\C^m$.

Equality of the finite traces implies equality of the traces of all words, and hence of the eigenvalues and hyperbolic translation lengths of the corresponding matrices. Equal characters also have equal finite traces. We shall use the following compact sets:
\begin{equation}\label{eq:compact-traces}
 \K=\pi_{\mathrm{ch}}\bigl(\Hom(\Gamma,\SU(2))\bigr),\quad K^{\mathrm{tr}}=\tau(\K).
\end{equation}
The chosen generators identify $\Hom(\Gamma,\SU(2))$ with a closed subset of $\SU(2)^r$, which is compact. Consequently, $\K$ and $K^{\mathrm{tr}}$ are compact, and every coordinate of $K^{\mathrm{tr}}$ is real.

We can obtain the corresponding objects on $\overline X$ by conjugating $Q_0$, the basis of $V$, the theta characteristic, and the initial local frame. We use the same paths on the underlying topological surface to mark the fundamental group of $\overline X$. Complex conjugation of analytic continuation then gives
\begin{equation}\label{eq:conjugate-traces}
 F_{\overline X}(\overline Q)=\overline{F_X(Q)}.
\end{equation}
The corresponding identity for holonomy characters follows in the same way. When $r=0$, the trace target consists of a single point; this occurs only in the elementary zero-dimensional cases considered above.


\section{Degeneration of holonomy}\label{sec:trees}

In this section, we fix a compact connected Riemann surface $X$, a reduced marked divisor $D$, prescribed coefficients $\boldsymbol\delta$, and a nonempty accessory space $\A_X=Q_0+V$, where $V=H^0(X,K_X^{\otimes2}(D))$. Put $X^\circ=X\setminus D$ and $\Gamma=\pi_1(X^\circ,x_0)$ for a fixed basepoint $x_0$. We use the theta characteristic, group marking, and coefficient norm fixed in Section~\ref{sec:holonomy}. We first treat the case $m=\dim V>0$. Throughout this section, constants may depend on these choices and on the specified compact coordinate sets and paths.

 
\subsection{Local estimates and coordinate changes}\label{sec:local}

We realize hyperbolic three-space $\HH^3$ as the space of positive definite Hermitian $2\times2$ matrices with determinant one \cite{BH99}. Each $C\in\SL_2(\C)$ acts by $H\mapsto CHC^*$, where $C^*$ denotes the conjugate transpose of $C$. We normalize the metric so that
\begin{equation}\label{eq:hyperbolic-norm}
 d_{\HH^3}(I,CC^*)=2\log\norm C_{\mathrm{op}}.
\end{equation}
Here $\norm C_{\mathrm{op}}$ denotes the operator norm with respect to the standard Hermitian form on $\C^2$. We denote the translation length of this isometry by $L(C)$. If $\lambda,\lambda^{-1}$ are the eigenvalues of $C$, then
\[
 L(C)=2\log\max\{|\lambda|,|\lambda|^{-1}\}.
\]

\begin{lemma}\label{lem:growth}
For each fixed word $\eta\in\Gamma$, there is $C_\eta>0$ such that
\begin{equation}\label{eq:word-growth}
 L(\rho_Q(\eta))\le C_\eta\sqrt{1+\norm{Q-Q_0}}\quad(Q\in\A_X).
\end{equation}
Moreover, for each $Q$, there is a hyperbolic basepoint at which the displacements of all chosen generators satisfy bounds of the same form simultaneously.
\end{lemma}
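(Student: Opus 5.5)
The plan is a Gr\"onwall estimate for the transfer matrices along fixed paths. The constants come from a scaling of the first-order system. Fix once and for all a finite chain of coordinate disks in $X^\circ$ covering representatives of the generators $\sigma_1,\ldots,\sigma_r$. Each disk is chosen relatively compact in a larger coordinate disk, and the paths are piecewise smooth of bounded length inside these disks. In each such chart write $Q_z=Q_{0,z}+T q_z$ with $T=\norm{Q-Q_0}$ and $\norm q\le 1$. Since $V$ is finite dimensional and the chosen disks avoid $D$, we have $\sup|q_z|\le C$ uniformly over the unit sphere of $V$. The jet equation is $Y'=A(z)Y$ with $A=\begin{pmatrix}0&1\\-Q_z&0\end{pmatrix}$.

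Conjugate by the diagonal matrix $S=\operatorname{diag}(1,\mu)$ with $\mu=\sqrt{1+T}$. This gives $\tilde Y=S Y$ and $\tilde A=SAS^{-1}=\begin{pmatrix}0&\mu^{-1}\\-\mu Q_z&0\end{pmatrix}$. Then $\norm{\tilde A}_{\mathrm{op}}\le C'\mu$ on the compact paths, since $|Q_z|\le C(1+T)$. Gr\"onwall gives, for the transfer matrix $\Phi$ along a path segment of length $\ell$,
\[
 \norm{S\Phi S^{-1}}_{\mathrm{op}}\le e^{C'\ell\mu}.
\]
At the overlaps, the coordinate transition matrices $G$ from \eqref{eq:jet-transition} are fixed and independent of $Q$. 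After conjugation by $S$, their norm is at most $C\mu$, and this factor is absorbed as $e^{C\log\mu}\le e^{C\mu}$. The main technical point is that one must conjugate by the same $S$ in every chart, which works because the scaling only involves the $y'$ component. Alternatively, in each chart one uses $\mu$-conjugation and pays a bounded factor per chart change. Multiplying over the finitely many segments, we get $\norm{S\rho_Q(\sigma_i)S^{-1}}_{\mathrm{op}}\le e^{C_i\mu}$, with $S$ computed in the base frame at $x_0$.

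The hyperbolic basepoint is $H_Q=S^{-1}(S^{-1})^*$, i.e. the image of $I$ under $S^{-1}$. By \eqref{eq:hyperbolic-norm}, the displacement of $\rho_Q(\sigma_i)$ at this point equals $d(I,\tilde C\tilde C^*)$ with $\tilde C=S\rho_Q(\sigma_i)S^{-1}$, and hence is at most $2C_i\mu$. This proves the simultaneous statement. For an arbitrary fixed word $\eta$ of length $k$ in the generators, the triangle inequality gives displacement at $H_Q$ at most $\sum 2C_{i_j}\mu$. Inverses have the same displacement. Finally, the translation length is bounded by the displacement at any point, so $L(\rho_Q(\eta))\le C_\eta\sqrt{1+\norm{Q-Q_0}}$, which is \eqref{eq:word-growth}.

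The only real care needed is the uniformity of the chart transitions and of $\sup|q_z|$ over the unit sphere of $V$. Both follow from compactness and from the fact that the chosen disks stay away from $D$. There is no genuine obstacle; the word-length dependence is the only place $\eta$ enters the constant.
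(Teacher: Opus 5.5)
Your strategy is the paper's: gauge the jet system by a constant diagonal matrix, run Gr\"onwall along a fixed chain of charts, control the gauged transitions, and take the basepoint from the balanced frame. However, your balancing is inverted, so the central estimate fails as written. With $S=\operatorname{diag}(1,\mu)$ you correctly compute $\tilde A=\begin{pmatrix}0&\mu^{-1}\\-\mu Q_z&0\end{pmatrix}$. But $|Q_z|\le C\mu^2$ only gives $|\mu Q_z|\le C\mu^3$, so $\norm{\tilde A}_{\mathrm{op}}\le C'\mu$ does not follow, and it is false wherever $q_z\ne0$ on the path once $T$ is large. Your argument then yields only $e^{C\mu^3}$, i.e.\ $L(\rho_Q(\eta))\le C(1+\norm{Q-Q_0})^{3/2}$. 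That is worse than applying Gr\"onwall to the unscaled system, where $\norm{A}\le C\mu^2$ gives exponent $C\mu^2$. The square-root rate is the whole content of the lemma. It comes from balancing the two off-diagonal entries to their geometric mean $|Q_z|^{1/2}\le C\mu$. Solutions behave like $\exp(\pm\mu\int\sqrt q)$, so $y'$ is of size $\mu y$, and you must divide $y'$ by $\mu$ rather than multiply it.

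The repair is to take $S=\operatorname{diag}(\mu^{1/2},\mu^{-1/2})$. This is the determinant-one normalization of $\operatorname{diag}(1,\mu^{-1})$, so it actually acts on $\HH^3$ in the model used, and it is the paper's $D_\mu$. Then $\tilde A=\begin{pmatrix}0&\mu\\-Q_z/\mu&0\end{pmatrix}$ has norm at most $C\mu$. The gauged transitions $SGS^{-1}=\begin{pmatrix}t^{-1/2}&0\\-t_w/(2\mu t^{3/2})&t^{1/2}\end{pmatrix}$ and their inverses are uniformly bounded, so you do not even need to absorb a $\log\mu$ factor.

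With this change your proof is essentially the paper's. The only difference is that you pass from generators to arbitrary words by the triangle inequality and use that displacement dominates translation length. The paper instead runs Gr\"onwall along each word's own path and bounds the spectral radius by the norm.

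One small bookkeeping point: whether the controlled displacement sits at $S^{-1}\cdot I$ or at $S\cdot I=SS^*$ depends on whether $\rho_Q$ is the column transfer matrix or its transpose in your normalization. In the paper's row convention, $\rho_Q(\eta)A=AP_\eta$ with $A$ the balanced frame, and the basepoint is $AA^*$.
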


\begin{proof}
Put $\mu=\sqrt{1+\norm{Q-Q_0}}$. Along fixed representative paths covered by finitely many compact coordinate sets, $|Q_z|\le C\mu^2$. For the row solution matrix $Y_z$ with rows $(y_i,y_i')$, set $D_\mu=\operatorname{diag}(\mu^{1/2},\mu^{-1/2})$. Then
\[
 (Y_zD_\mu)'=(Y_zD_\mu)\begin{pmatrix}0&-Q_z/\mu\\ \mu&0\end{pmatrix}.
\]
Along these paths, the norm of this coefficient is at most $C\mu$ times the path speed. At a coordinate change, the balanced row matrix changes by $D_\mu^{-1}G^{\mathsf t}D_\mu$, with $G$ as in \eqref{eq:jet-transition}. Its diagonal entries are fixed and bounded away from zero; its off-diagonal entry is a fixed bounded function divided by $\mu$. Both this matrix and its inverse are bounded on the chosen overlaps. By Gronwall's inequality and the finiteness of the coordinate transitions, the norms of the transfer matrices and their inverses are bounded by $\exp(C_\eta\mu)$.

The transfer around a based loop is conjugate to its monodromy. Since its spectral radius is at most its norm, we obtain \eqref{eq:word-growth}. If $A$ is the balanced frame at the common initial point and $P_\eta$ is the loop transfer, then $\rho_Q(\eta)A=AP_\eta$. The displacement of $AA^*$ is $2\log\norm{P_\eta}_{\mathrm{op}}$. Using the same initial frame for all generators proves the last assertion.
\end{proof}

Consider a sequence $(Q_j)$ in $\A_X$ of the form
\begin{equation}\label{eq:general-sequence}
 Q_j=Q_0+T_jq_j,\quad T_j\longrightarrow+\infty,\quad q_j\longrightarrow q\ne0,
 \quad \lambda_j=\sqrt{T_j}.
\end{equation}
Here $T_j>0$ and $q_j\in V$. We do not require the positive scale $T_j$ to equal the accessory norm, so that two sequences can be compared using a common scale. On a regular coordinate disk choose $s_{j,z}=\sqrt{q_{j,z}}$, set $k_{j,z}=\lambda_js_{j,z}$, and define
\begin{equation}\label{eq:hyperbolic-frame}
 D_{j,z}=\begin{pmatrix}k_{j,z}^{1/2}&0\\0&k_{j,z}^{-1/2}\end{pmatrix},\quad
 A_{j,z}=Y_{j,z}D_{j,z},\quad H_{j,z}=A_{j,z}A_{j,z}^*.
\end{equation}
Here $Y_{j,z}$ is a matrix of determinant one formed from an analytically continued basis of local solutions. Different choices of square roots introduce diagonal unitary factors on the right, leaving $H_{j,z}$ unchanged. By contrast, monodromy acts on $Y_{j,z}$ by left multiplication.

We next check compatibility under changes of coordinates. For $z=z(w)$, we set $t=dz/dw$ and choose the square roots consistently so that $k_{j,w}=t k_{j,z}$. By \eqref{eq:jet-transition},
\begin{equation}\label{eq:balanced-transition}
 A_{j,w}=A_{j,z}E_j,\quad
 E_j=D_{j,z}^{-1}G^{\mathsf t}D_{j,w}
 =\begin{pmatrix}1&-t_w/(2t^2k_{j,z})\\0&1\end{pmatrix}.
\end{equation}
On a compact regular overlap, we have $E_j=I+O(\lambda_j^{-1})$. Since congruence by $A_{j,z}^{-1}$ is an isometry of $\HH^3$, we obtain
\begin{equation}\label{eq:coordinate-distance}
 d_{\HH^3}(H_{j,z},H_{j,w})=2\log\norm{E_j}_{\mathrm{op}}
 =O(\lambda_j^{-1}).
\end{equation}
The same estimate holds for other choices of square roots, since these introduce only unitary factors. It follows that the rescaled matrices determine the same limiting point on overlapping charts.

\begin{proposition}\label{prop:local-limit}
Let $U$ be a sufficiently small simply connected coordinate disk whose closure is disjoint from $D$ and the zeros of $q$. For $z,w$ in a compact smaller regular flat rectangle,
\begin{equation}\label{eq:local-limit}
 \frac{d_{\HH^3}(H_j(z),H_j(w))}{2\lambda_j}
 \longrightarrow\left|\operatorname{Im}\int_w^z\sqrt q\right|.
\end{equation}
The convergence is uniform as both endpoints vary.
\end{proposition}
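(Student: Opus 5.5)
The plan is a WKB argument in the natural coordinate of $q_j$, with the coordinate-change estimate \eqref{eq:coordinate-distance} used to pass between charts. On $U$ fix a branch $s_j=\sqrt{q_j}$. Set $\zeta_j(z)=\int_{z_0}^z s_j$ and $\zeta(z)=\int_{z_0}^z\sqrt q$. Since $q_j\to q$ uniformly on $\overline U$ and $q$ has no zeros there, $\zeta_j\to\zeta$ uniformly with derivatives. For large $j$, each $\zeta_j$ is a biholomorphism from a neighbourhood of the chosen compact flat rectangle $R$ onto a neighbourhood of a convex set close to $\zeta(R)$.

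By \eqref{eq:balanced-transition} and \eqref{eq:coordinate-distance}, replacing the $z$-coordinate by $\zeta_j$ changes $H_j$ by $O(\lambda_j^{-1})$ in $d_{\HH^3}$. The constant in that estimate is uniform in $j$, because $t=d\zeta_j/dz$ and $t_w$ converge uniformly. Also $k_{j,\zeta_j}=\lambda_j$ is constant, since $k_{j,w}=t\,k_{j,z}$. So it suffices to prove \eqref{eq:local-limit} with distances computed in the coordinate $\zeta_j$, with $\operatorname{Im}(\zeta_j(z)-\zeta_j(w))$ on the right. The error from replacing $\zeta_j$ by $\zeta$ is $o(1)$ uniformly.

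In the coordinate $\zeta=\zeta_j$, the rule \eqref{eq:projective-change} gives $Q_{j,\zeta}=\lambda_j^2+b_j(\zeta)$. Here $b_j$ collects $(Q_0)_\zeta$ and the Schwarzian term $\tfrac12\mathcal S(z)(\zeta)$, both bounded uniformly on the region. Exactly as in the proof of Lemma~\ref{lem:growth}, the balanced frame $A_j=Y_jD_j$ with $D_j=\operatorname{diag}(\lambda_j^{1/2},\lambda_j^{-1/2})$ satisfies
\[
 A_j'=A_j\Bigl(\lambda_jJ+\lambda_j^{-1}B_j\Bigr),\quad J=\begin{pmatrix}0&-1\\1&0\end{pmatrix},\quad B_j=\begin{pmatrix}0&-b_j\\0&0\end{pmatrix}.
\]
Diagonalize $J=SJ_0S^{-1}$ with $J_0=\operatorname{diag}(i,-i)$ and $S$ a fixed unitary matrix, so $\exp(\lambda\zeta J)$ is conjugate to $\operatorname{diag}(e^{i\lambda\zeta},e^{-i\lambda\zeta})$. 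Integrate along the straight segment from $\zeta(w)$ to $\zeta(z)$, which lies in the convex image. The transfer matrix $P_j$ satisfies $A_j(z)=A_j(w)P_j$, and I will show
\[
 P_j=S\,\operatorname{diag}\bigl(e^{i\lambda_j\Delta},e^{-i\lambda_j\Delta}\bigr)\bigl(I+E_j\bigr)S^{-1},\quad \Delta=\zeta(z)-\zeta(w),\quad \norm{E_j}=O(\lambda_j^{-1}).
\]
Given this, $d_{\HH^3}(H_j(w),H_j(z))=2\log\norm{P_j}_{\mathrm{op}}=2\lambda_j|\operatorname{Im}\Delta|+O(\lambda_j^{-1})$. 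Dividing by $2\lambda_j$ gives \eqref{eq:local-limit} uniformly, because every constant depends only on bounds for $b_j$ and the diameter of the region.

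The main obstacle is the perturbation estimate for $E_j$, because the unperturbed evolution is not unitary. A naive Gronwall bound on the conjugated system loses a factor of order $e^{2\lambda_j|\operatorname{Im}\Delta|}$. I would handle this with the Volterra series along the segment $\zeta(s)=\zeta(w)+s\Delta$, $s\in[0,1]$. Its $n$-th term is an iterated integral over $0\le s_1\le\cdots\le s_n\le1$. The kernels are products of the bounded entries of $S^{-1}B_jS$ with exponentials $e^{\pm i\lambda_j(\zeta(s_{k+1})-\zeta(s_k))}$ and the outer factor $e^{\pm i\lambda_j(\zeta(1)-\zeta(s_n))}$.

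Along a straight segment, $\operatorname{Im}\zeta(s)$ is monotone. So each such chain of exponentials has modulus at most $e^{\lambda_j|\operatorname{Im}\Delta|}$: the exponents telescope, and reversing a sign can only decrease the total. The $n$-th term is therefore bounded by $e^{\lambda_j|\operatorname{Im}\Delta|}(C|\Delta|/\lambda_j)^n/n!$. Summing the series bounds $\norm{P_j}_{\mathrm{op}}$ above by $e^{\lambda_j|\operatorname{Im}\Delta|}(1+O(\lambda_j^{-1}))$.

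For the matching lower bound, I would look at the dominant diagonal entry, whose leading term has modulus exactly $e^{\lambda_j|\operatorname{Im}\Delta|}$. The correction terms are $O(\lambda_j^{-1})e^{\lambda_j|\operatorname{Im}\Delta|}$ by the same bound. This gives the two-sided estimate needed, and hence the claimed uniform convergence.
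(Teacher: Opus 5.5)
Your argument is correct and is essentially the paper's proof. Passing to the chart $\zeta_j$, where $k_{j,\zeta_j}=\lambda_j$, is exactly the Liouville substitution $y=s_j^{-1/2}v$ combined with the shear $S_j$ of \eqref{eq:shear}. Your Volterra series is the iterated form of the Duhamel formula \eqref{eq:duhamel}, and its termwise bound $e^{\lambda_j|\operatorname{Im}\Delta|}(C|\Delta|/\lambda_j)^n/n!$ is precisely what the paper's Gronwall estimate packages.

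Two corrections are needed.

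First, the displayed factorization $P_j=S\,\operatorname{diag}(e^{i\lambda_j\Delta},e^{-i\lambda_j\Delta})(I+E_j)S^{-1}$ with $\norm{E_j}=O(\lambda_j^{-1})$ is false whenever $\operatorname{Im}\Delta\ne0$. To form $E_j$ you multiply the error by the inverse of the diagonal factor, and this amplifies the row of the subdominant exponential by $e^{\lambda_j|\operatorname{Im}\Delta|}$. Already for constant $b_j=b\ne0$, the first-order term gives an off-diagonal entry of $E_j$ of size comparable to $|b|\,e^{2\lambda_j|\operatorname{Im}\Delta|}\lambda_j^{-2}$, which is unbounded. What your series argument actually proves, and all that your last two paragraphs use, is the additive bound $\norm{P_j-e^{\lambda_j\Delta J}}\le e^{\lambda_j|\operatorname{Im}\Delta|}\bigl(e^{C|\Delta|/\lambda_j}-1\bigr)$, i.e.\ \eqref{eq:relative-bound}. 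State that instead of the factorization; both the upper bound and your dominant-diagonal-entry lower bound follow from it directly.

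Second, the obstacle you describe arises only in the interaction picture. Since $M_j=\lambda_j\Delta J$ is normal, $\norm{e^{sM_j}}=e^{s\lambda_j|\operatorname{Im}\Delta|}$ is exactly multiplicative in $s$. Gronwall applied to $e^{-s\lambda_j|\operatorname{Im}\Delta|}\norm{P_j(s)}$ therefore loses nothing, and this is how the paper proceeds.
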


\begin{proof}
Choose square roots $s_j=\sqrt{q_j}$ that converge, together with all their derivatives, to a nonvanishing square root $s=\sqrt q$. After shrinking the disk, we may assume that the maps $\zeta_j=\int s_j\,dz$ are injective. For sufficiently large $j$, their images contain every compact rectangle lying strictly inside the image of the limiting coordinate.

By the Liouville substitution $y=s_j^{-1/2}v$, one has
\begin{equation}\label{eq:liouville}
 v_{\zeta\zeta}+(\lambda_j^2+W_j)v=0,\quad
 W_j=\frac{Q_0}{s_j^2}+\frac{(s_j^{-1/2})''}{s_j^{3/2}},
\end{equation}
where the last derivatives are taken in $z$. The functions $W_j$ are uniformly bounded on compact smaller rectangles. The Wronskian is unchanged under this substitution because $(s_j^{-1/2})^2\zeta_j'=1$. Let $B_j$ be the matrix whose $i$th row is $(\lambda_j^{1/2}v_{i,j},\lambda_j^{-1/2}v_{i,j,\zeta})$. Direct differentiation gives
\begin{equation}\label{eq:shear}
 A_j=B_jS_j,\quad S_j=\begin{pmatrix}1&-s_j'/(2\lambda_js_j^2)\\0&1\end{pmatrix},
\end{equation}
and hence
\begin{equation}\label{eq:shear-distance}
 d_{\HH^3}(A_jA_j^*,B_jB_j^*)=d_{\HH^3}(S_jS_j^*,I)=O(\lambda_j^{-1}).
\end{equation}

We parametrize the straight line segment joining the endpoints by $\zeta_j(t)=\zeta_j(0)+\ell_jt$, $0\le t\le1$. Along this segment, the matrix equation takes the form
\begin{equation}\label{eq:right-ode}
 \begin{gathered}
 B_j'=B_j(M_j+R_j(t)),\\
 M_j=\lambda_j\ell_j\begin{pmatrix}0&-1\\1&0\end{pmatrix},\quad
 R_j(t)=\begin{pmatrix}0&-\ell_jW_j/\lambda_j\\0&0\end{pmatrix}.
 \end{gathered}
\end{equation}
Writing $B_j(t)=B_j(0)E_j(t)$, we obtain by variation of constants
\begin{equation}\label{eq:duhamel}
 E_j(t)=e^{tM_j}+\int_0^t E_j(u)R_j(u)e^{(t-u)M_j}\,du.
\end{equation}
The matrix $M_j$ is normal and $\norm{e^{tM_j}}=e^{h_jt}$ for $t\ge0$, where $h_j=\lambda_j|\operatorname{Im}\ell_j|$. Since $\norm{R_j}\le C/\lambda_j$, we first apply Gronwall's inequality to obtain
\[
e^{-h_jt}\norm{E_j(t)}\le e^{Ct/\lambda_j}.
\]
Substituting this bound into \eqref{eq:duhamel} yields
\begin{equation}\label{eq:relative-bound}
 \norm{E_j(1)-e^{M_j}}\le e^{h_j}(e^{C/\lambda_j}-1).
\end{equation}
With $\eta_j=e^{C/\lambda_j}-1$, the triangle inequality and the reverse triangle inequality give
\[
 (1-\eta_j)e^{h_j}\le\norm{E_j(1)}\le(1+\eta_j)e^{h_j}
\]
for sufficiently large $j$. Consequently,
\begin{equation}\label{eq:singular-value}
 \log\norm{E_j(1)}=\lambda_j|\operatorname{Im}\ell_j|+O(\lambda_j^{-1}).
\end{equation}
Since congruence by $B_j(0)^{-1}$ is a hyperbolic isometry, the common factor $B_j(0)$ does not affect the distance. By \eqref{eq:hyperbolic-norm} and \eqref{eq:shear-distance}, we obtain
\[
 d_{\HH^3}(H_j(z),H_j(w))=2\lambda_j|\operatorname{Im}\ell_j|+O(\lambda_j^{-1}).
\]
As the endpoints vary in the chosen compact rectangle, $\ell_j$ converges uniformly to $\int_w^z\sqrt q$. This proves \eqref{eq:local-limit}.
\end{proof}

We next use the integrability of $|q|^{1/2}$ to approach the zeros and poles of the limiting differential in the limit tree.

 
\subsection{The limiting tree}\label{subsec:tree-limit}

\begin{definition}[Real trees and translation lengths]\label{def:tree-length}
An $\R$-tree is a metric space in which any two points are joined by a unique arc and that arc is isometric to a real interval. For an isometry $h$ of a metric space $(M,d)$, its translation length is $\inf_{x\in M}d(x,hx)$. We write $\ell_T(h)$ for this length in a tree $T$. For a marked group action, we call $\eta\mapsto\ell_T(\eta)$ the \emph{marked length function}.
\end{definition}

Choose a lift $z_0$ of a point where $q$ is regular and nonzero. Fix a nonprincipal ultrafilter $\mathfrak u$ on $\mathbb N$, and form
\begin{equation}\label{eq:cone}
 T=\lim_{\mathfrak u}\bigl(\HH^3,(2\lambda_j)^{-1}d_{\HH^3},H_j(z_0)\bigr).
\end{equation}
The metric ultralimit is complete \cite[Lemma~I.5.53]{BH99} and geodesic. Since the hyperbolicity constant tends to zero under rescaling, the ultralimit is an $\R$-tree \cite{BH99,KL95}.

\begin{proposition}\label{prop:tree-map}
The monodromy representations induce an isometric action of $\Gamma$ on $T$ and a nonconstant equivariant map
\[
 f:\widetilde{X^\circ}\longrightarrow T.
\]
On a regular flat rectangle,
\begin{equation}\label{eq:height-model}
 f=\iota\circ u,\quad u=\operatorname{Im}\int\sqrt q,
\end{equation}
where $\iota$ is an isometric embedding of a real interval. The map extends continuously across the lifts of the zeros of $q$. Each lifted puncture has a finite limiting endpoint fixed by its peripheral element.
\end{proposition}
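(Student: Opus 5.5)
The plan is to build $f$ as the ultralimit of the rescaled developing maps $z\mapsto H_j(z)$, after checking that these maps are uniformly Lipschitz on compact regular sets, and then to control the behaviour near zeros and punctures using integrability of $|q|^{1/2}$.

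\emph{Step 1: the action.} For $\gamma\in\Gamma$ set $\gamma\cdot[x_j]=[\rho_{Q_j}(\gamma)x_j\rho_{Q_j}(\gamma)^*]$. This is well defined provided the rescaled displacement of the basepoint stays $\mathfrak u$-bounded, that is, $d_{\HH^3}(H_j(z_0),\rho_j(\gamma)H_j(z_0))=O(\lambda_j)$. Equivariance of the frames gives $\rho_j(\gamma)H_j(z_0)\rho_j(\gamma)^*=H_j(\gamma z_0)$. Hence it suffices to bound $d(H_j(z_0),H_j(\gamma z_0))$ by $O(\lambda_j)$. I would get this by covering a fixed path from $z_0$ to $\gamma z_0$ by finitely many regular flat rectangles; this is possible since zeros and poles of $q$ are finite and the path may avoid them. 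On each rectangle Proposition~\ref{prop:local-limit} bounds the rescaled distance by $|\operatorname{Im}\int\sqrt q|+o(1)$, and the chart changes cost $O(\lambda_j^{-1})$ by \eqref{eq:coordinate-distance}. Then $\Gamma$ acts by isometries, since each $\rho_j(\gamma)$ is an isometry of $\HH^3$.

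\emph{Step 2: the map on the regular part.} Define $f(z)=[H_j(z)]$ for lifts $z$ of points of $X^\circ$ minus the zeros of $q$. This lies in $T$ by the same path-covering bound, and $f$ is equivariant by construction. On a regular flat rectangle, Proposition~\ref{prop:local-limit} gives
\[
d_T(f(z),f(w))=|u(z)-u(w)|,
\]
with the convergence uniform in both endpoints. Thus $f$ factors through $u$ on the rectangle. The image $u(\text{rectangle})$ is an interval, and $t\mapsto f(u^{-1}(t)\cap\text{vertical segment})$ is an isometric embedding $\iota$ of that interval, giving \eqref{eq:height-model}. Since $q\neq0$, $u$ is nonconstant on any rectangle, so $f$ is nonconstant.

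\emph{Step 3: zeros and punctures.} These are the main obstacle, because Proposition~\ref{prop:local-limit} fails near them. The relevant bound is Lipschitz continuity of $f$ for the singular flat metric $|q|^{1/2}$: $d_T(f(z),f(w))\le d_{|q|}(z,w)$ on the regular set, where $d_{|q|}$ is the length metric of $|q|^{1/2}$. I would obtain this by subdividing a near-geodesic path into pieces contained in flat rectangles, using the pointwise bound $|\operatorname{Im}\ell|\le|\ell|$. A zero $p$ has $d_{|q|}$-finite neighbourhoods, and the $|q|$-diameter of a small punctured disk around $p$ tends to $0$. Hence $f$ is uniformly continuous there and extends continuously across $p$, using completeness of $T$.

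At a puncture $q$ has at most a simple pole, so $|q|^{1/2}\sim|z|^{-1/2}$ is integrable. A small disk around the puncture therefore has finite $|q|$-diameter, and the lift to the universal cover of a punctured disk is a cone of finite angle. The $|q|$-distance from points with $|z|=\epsilon$ to the apex tends to $0$, uniformly along the lifted horocyclic direction, because one can go radially inward. By the Lipschitz bound, $f$ applied to points approaching the puncture forms a Cauchy family, so it has a limit $e\in T$, again by completeness. The peripheral element $\gamma_i$ maps the lifted neighbourhood to itself, and equivariance gives $\gamma_i e=e$.

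The delicate point in Step 3 is the Lipschitz comparison near the zeros and poles: the rectangles degenerate there, so the uniform constants from Proposition~\ref{prop:local-limit} are not available. I would handle this by taking limits over rectangles that stay a fixed $|q|$-distance away from the singular points, obtaining the inequality on the regular locus only, and then passing to closures.
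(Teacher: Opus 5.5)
Your proposal is correct and follows the paper's proof: the ultralimit of the rescaled frames $H_j$ with $O(\lambda_j)$ basepoint displacements, the height model from Proposition~\ref{prop:local-limit}, the flat-length bound \eqref{eq:flat-path} obtained by subdividing regular paths, and completeness of $T$ at zeros and punctures. The differences are minor. You bound displacements by chaining Proposition~\ref{prop:local-limit} along a covered path rather than by the coarse Gronwall bound of Lemma~\ref{lem:growth}. At a puncture you get uniformity over all lifted sectors directly, by descending radially and turning near the apex (the lifted neighbourhood is an infinite-angle cone, not a finite-angle one, but your radial argument still works). The paper instead proves the $O(\sqrt r)$ bound in one sector and propagates it using that the peripheral element fixes the endpoint.
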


\begin{proof}
Since $q_j\to q\ne0$ in $V$, the quantity $\sqrt{1+\norm{Q_j-Q_0}}$ is comparable with $\lambda_j$. At each fixed regular point, the diagonal matrices in \eqref{eq:hyperbolic-frame} are uniformly comparable to those used for the coarse balancing in Lemma~\ref{lem:growth}. Finite representative paths therefore give $O(\lambda_j)$ based displacements for every generator, and for every fixed word. It follows that the monodromy actions induce an action on the pointed ultralimit \eqref{eq:cone}.

For a regular lifted point $z$, continue the fundamental solutions along a fixed path and take the ultralimit of $H_j(z)$. The same coarse estimate bounds its rescaled distance from the basepoint. The coordinate discrepancy \eqref{eq:coordinate-distance} vanishes after rescaling, so the point is independent of the coordinate chart. The theta cocycle and analytic continuation ensure compatibility along chart chains and equivariance. By Proposition~\ref{prop:local-limit}, two points of a regular rectangle have limiting distance $|u(z)-u(w)|$. Points at equal heights have the same image, while distinct heights determine an isometric interval, as illustrated in Figure~\ref{fig:local-height}. This proves \eqref{eq:height-model} and nonconstancy.

Subdivision of any regular piecewise smooth path gives
\begin{equation}\label{eq:flat-path}
 d_T(f(x),f(y))\le\int_{x}^{y}|q|^{1/2}.
\end{equation}
Near a point where $q=z^kh(z)\,dz^2$, $h(0)\ne0$, with $k\ge-1$, radial tails and circular arcs of bounded angle have flat length $O(r^{1+k/2})$. At an ordinary zero, completeness of $T$ then gives a unique continuous extension. At a marked simple pole they give a radial endpoint and an $O(\sqrt r)$ bound in a fundamental angular sector. A full peripheral turn has length tending to zero, so equivariance makes the peripheral element fix that endpoint. Since its powers also fix the endpoint, the same bound holds in every lifted angular sector. At a mark where the simple pole cancels, the corresponding holomorphic estimate is stronger.
\end{proof}

\begin{figure}[htbp]
\centering
\begin{tikzpicture}[x=1cm,y=1cm,font=\small,
  line cap=round,line join=round,>=Stealth]
  \draw[line width=.4pt] (0,0) rectangle (3.6,2.2);
  \foreach \h in {.55,1.1,1.65}
    \draw[line width=.4pt] (0,\h)--(3.6,\h);
  \draw[->,line width=.4pt] (0,-.28)--(3.9,-.28)
    node[right] {$\operatorname{Re}\zeta$};
  \draw[->,line width=.4pt] (-.3,0)--(-.3,2.45)
    node[above] {$u$};
  \node at (1.8,2.65) {$\zeta=\int\sqrt q$};
  \fill (1.6,.55) circle (1.5pt) node[below left=2pt] {$z$};
  \fill (2.35,1.65) circle (1.5pt) node[above left=2pt] {$w$};
  \draw[->,densely dashed,line width=.4pt] (3.75,.55)--(6.85,.55);
  \draw[->,densely dashed,line width=.4pt] (3.75,1.65)--(6.85,1.65);
  \node at (5.35,1.1) {$f=\iota\circ u$};
  \draw[line width=1pt] (7,0)--(7,2.2);
  \draw[line width=.5pt] (6.91,0)--(7.09,0);
  \draw[line width=.5pt] (6.91,2.2)--(7.09,2.2);
  \fill (7,.55) circle (1.5pt) node[right=4pt] {$f(z)$};
  \fill (7,1.65) circle (1.5pt) node[right=4pt] {$f(w)$};
  \node at (7,2.65) {local image in $T$};
  \draw[|-|,line width=.4pt] (8.75,.55)--(8.75,1.65)
    node[midway,right=4pt] {$|u(z)-u(w)|$};
\end{tikzpicture}
\caption{The local height model (\ref*{eq:height-model}): horizontal trajectories in a regular flat rectangle collapse to points.}
\label{fig:local-height}
\end{figure}

In particular, every invariant scalar distance formed from these maps and an invariant closed target set is bounded near the punctures and ordinary zeros. These bounds apply to the completed limit maps.

 
\subsection{Properness and growth}\label{subsec:properness}

To identify the exact scale of the limiting length function, we first relate translation length to two displacements from a fixed basepoint.

\begin{lemma}\label{lem:displacement}
For an isometry $h$ of an $\R$-tree and a point $x$,
\begin{equation}\label{eq:tree-displacement}
 \ell_T(h)=\max\{0,d_T(x,h^2x)-d_T(x,hx)\}.
\end{equation}
For an orientation-preserving isometry of $\HH^3$, with $d_\nu=d(P,h^\nu P)$,
\begin{equation}\label{eq:hyperbolic-displacement}
 \left|\max\{0,d_2-d_1\}-L(h)\right|\le4.
\end{equation}
\end{lemma}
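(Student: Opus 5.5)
Both statements follow the same pattern: reduce to the geometry of an axis or a fixed set, and use (quasi-)convexity of distance functions.

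\emph{Tree case.} We use the standard classification of isometries of an $\R$-tree: $h$ is either elliptic, with nonempty fixed set $\Fix(h)$, or hyperbolic, with an axis $A_h$ on which it translates by $\ell_T(h)>0$. In both cases, for any $x$, let $p$ be the projection of $x$ to the characteristic set $C_h$ (the fixed set or the axis), and put $a=d_T(x,C_h)$. One checks that $d_T(x,h^kx)=2a+|k|\ell_T(h)$ whenever this is positive. In the hyperbolic case, the geodesic from $x$ to $h^kx$ runs through $p$, along the axis to $h^kp$, and out to $h^kx$. In the elliptic case, the segments $[x,p]$ and $[h^kx,p]$ meet only at $p$ unless $h^k$ fixes an initial piece. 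The elliptic case needs a little care: $d(x,h^2x)\le d(x,hx)$ can be strict, for instance when $h^2$ fixes more than $h$. So we only assert $d(x,h^2x)\le d(x,hx)=2a$, which follows from $d(x,h^2x)\le 2d(x,\Fix(h^2))\le 2a$ because $\Fix(h)\subset\Fix(h^2)$. Then the maximum with $0$ gives $0=\ell_T(h)$. In the hyperbolic case the formula gives $d_2-d_1=(2a+2\ell)-(2a+\ell)=\ell$, which proves \eqref{eq:tree-displacement}.

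\emph{Hyperbolic space case.} Conjugating, we take $h$ to be either parabolic, or $\mathrm{diag}(\lambda,\lambda^{-1})$ with complex $\lambda$, so that $h$ is loxodromic or elliptic with an axis $\alpha$ and complex translation $L+i\theta$. For $P$ at distance $s$ from $\alpha$, the standard formula is
\[
\cosh d(P,h^kP)=\cosh^2 s\cosh(kL)-\sinh^2 s\cos(k\theta).
\]
From this we get two-sided bounds
\[
2s+|k|L-c\le d(P,h^kP)\le 2s+|k|L,
\]
with a universal constant $c$, but only once $s$ is large relative to the rotation. That restriction is the obstacle: when $\theta$ is small and $s$ is small, $d_1$ can be tiny while $d_2$ is also tiny, and the bounds still hold trivially.

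A more robust route is to work with hyperbolic geometry directly. $\HH^3$ is $\log 3$-hyperbolic, or $\delta$-hyperbolic with a small universal $\delta$. The general inequality
\[
\bigl|\,d(P,h^2P)-d(P,hP)-\ell^\infty(h)\,\bigr|\le C\delta
\]
holds when $d(P,hP)$ is large compared with $\ell$. Here $\ell^\infty$ is the stable length, which equals $L(h)$ in $\HH^3$. It follows from the thin-triangle estimate for the broken geodesic $P\to hP\to h^2P$, namely that the Gromov product $(P\mid h^2P)_{hP}$ is within $O(\delta)$ of $d_1-\tfrac12 d_2+\cdots$.

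I would instead do the explicit computation from the cosh formula, since it gives clean constants. In the loxodromic case, write $X=\cosh^2 s$. Then $d_k\approx\log\bigl(2X\cosh(kL)\bigr)$ up to additive errors, and the errors are controlled by $\log 2$ terms. Combining these yields a total error of at most $4$. In the parabolic case $L=0$: in the upper half-space picture, $d_k$ is determined by $\sinh(d_k/2)=k|b|/(2t)$, which gives $d_2-d_1\le 2\log 2+o(1)\le 4$.

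The main obstacle is bookkeeping the small-$s$ or small-$d_1$ regime uniformly. There I would use the crude bounds $d_2\le 2d_1$ and $d_2\ge 2L-\cdots$, together with the case split $L\le 4$ versus $L>4$.
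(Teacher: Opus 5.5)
Your tree argument is correct and is the paper's: displacements $2a+\ell_T(h)$ and $2a+2\ell_T(h)$ in the hyperbolic case, and $d(x,hx)=2d(x,\Fix(h))\ge d(x,h^2x)$ via $\Fix(h)\subset\Fix(h^2)$ in the elliptic case.

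The gap is in $\HH^3$, in the upper estimate $d_2-d_1\le L+4$. This estimate must hold for every $P$, including when $L$ is small or zero (nonidentity elliptic $h$, or loxodromic $h$ with tiny translation). Your explicit route approximates each $d_k$ separately by $\log(2X\cosh kL)$ with $X=\cosh^2 s$. That approximation has unbounded error unless $kL$ is bounded below:
\begin{itemize}
\item for $\theta=0$, $L=\epsilon$ and $X=\epsilon^{-4}$, one gets $\cosh d_1\approx 1+\epsilon^{-2}/2$, so $d_1\approx 2\log(1/\epsilon)$, while $\log(2X\cosh L)\approx 4\log(1/\epsilon)$;
\item for a half-turn, $d_2=0$ no matter how large $s$ is.
\end{itemize}
Subtracting two approximations with unbounded errors gives no bound on $d_2-d_1$. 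The fallback tools you list for $L\le 4$ do not close this. The bound $d_2\le 2d_1$ only gives $d_2-d_1\le d_1$, which is unbounded as $P$ moves away from the axis. A lower bound for $d_2$ is irrelevant to the upper estimate. Also, the $\delta$-hyperbolic inequality you wrote without the $\max\{0,\cdot\}$ is false for the same half-turn, since then $d_2-d_1=-d_1$.

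The missing idea is to compare $d_2$ with $d_1$ directly, rather than each with $\log X$. Since $\cosh d-1=2\sinh^2(d/2)$, your cosh formula is equivalent to
\[
 \sinh^2(d_k/2)=\sinh^2(kL/2)\cosh^2 s+\sin^2(k\theta/2)\sinh^2 s .
\]
The double-angle identities $\sinh L=2\sinh(L/2)\cosh(L/2)$ and $\sin\theta=2\sin(\theta/2)\cos(\theta/2)$ then give $\sinh(d_2/2)\le 2\cosh(L/2)\sinh(d_1/2)$ for every $P$. Combined with $\arsinh(kx)-\arsinh x\le\log k$ for $k\ge1$, this yields $d_2-d_1\le 2\log(2\cosh(L/2))\le L+2\log 2$ uniformly. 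It is the same doubling you already used in the parabolic case.

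Once this upper bound is in hand, the lower bound is trivial for $L$ below a fixed threshold, because the maximum with $0$ is nonnegative. Above the threshold your separate approximations do work, since $\cosh kL\mp1$ is then comparable with $\cosh kL$. This is how the paper organizes the proof: it uses the threshold $L\ge1$ together with $\sinh(d_1/2)\le\cosh(L/2)\cosh s$ and $\sinh(d_2/2)\ge 2\sinh(L/2)\sinh(d_1/2)$.
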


\begin{proof}
For a hyperbolic tree isometry the two displacements are $2r+2\ell_T(h)$ and $2r+\ell_T(h)$, where $r$ is distance to its axis. For an elliptic isometry, displacement is twice the distance to its fixed subtree and $\Fix(h)\subset\Fix(h^2)$. The standard formulas for tree isometries \cite[Section~1.3]{CM87} therefore give \eqref{eq:tree-displacement}.

In $\HH^3$, suppose first that $h$ is loxodromic or nonidentity elliptic, with axis distance $r$ and rotation angle $\theta$. Set $L=L(h)$ and $A_\nu=\sinh(d_\nu/2)$. Cylindrical coordinates give
\begin{equation}\label{eq:cylindrical}
 A_\nu^2=\sinh^2(\nu L/2)\cosh^2r+\sin^2(\nu\theta/2)\sinh^2r.
\end{equation}
Thus $A_2\le2\cosh(L/2)A_1$, and $\arsinh(kx)-\arsinh x\le\log k$ for $k\ge1$ gives
\begin{equation}\label{eq:displacement-upper}
 d_2-d_1\le2\log(2\cosh(L/2))\le L+2\log2.
\end{equation}
For $L\ge1$ we also have
\[
 A_1\le\cosh(L/2)\cosh r,\quad
 A_2\ge2\sinh(L/2)A_1,\quad A_1\ge\sinh(1/2)>1/2.
\]
Using $\arsinh(kx)\ge\log(2kx)$ and $\arsinh x\le\log(2x)+\log((1+\sqrt5)/2)$ for $x\ge1/2$, we obtain
\[
 d_2-d_1\ge L+2\log(1-e^{-L})-2\log\frac{1+\sqrt5}{2}>L-4.
\]
When $L<1$, the lower bound follows from the nonnegativity of the maximum. In the parabolic case $L=0$ and $A_2=2A_1$, which gives the same upper bound; the identity is immediate. This proves \eqref{eq:hyperbolic-displacement}.
\end{proof}

Applying these estimates at the basepoint of \eqref{eq:cone}, we see that the error disappears after rescaling and that both based displacements converge to their ultralimits. For every $\eta\in\Gamma$,
\begin{equation}\label{eq:length-limit}
 \ell_T(\eta)=\lim_{\mathfrak u}\frac{L(\rho_{Q_j}(\eta))}{2\lambda_j}.
\end{equation}

\begin{lemma}\label{lem:nonzero-length}
The action on $T$ has a nonzero marked length function.
\end{lemma}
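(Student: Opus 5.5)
We argue by contradiction: assume $\ell_T(\eta)=0$ for every $\eta\in\Gamma$, and aim to contradict the nonconstancy of the equivariant map $f$ from Proposition~\ref{prop:tree-map}. The standard fact we use is that a finitely generated group acting isometrically on an $\R$-tree with identically vanishing length function has a global fixed point \cite{CM87}. Here $\Gamma$ is generated by $\sigma_1,\ldots,\sigma_r$, and every element is elliptic. If $g,h$ and $gh$ are all elliptic, then the fixed subtrees $\Fix(g)$ and $\Fix(h)$ intersect. Inductively, or by Serre's lemma for finitely many generators, all of $\Gamma$ fixes a point $p\in T$.

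Next consider the function $\phi=d_T(p,f(\cdot))$ on $\widetilde{X^\circ}$. Since $p$ is $\Gamma$-fixed and $f$ is equivariant, $\phi$ is $\Gamma$-invariant and descends to a continuous function on $X^\circ$. By the remark after Proposition~\ref{prop:tree-map}, $\phi$ is bounded near the punctures and the zeros of $q$. In fact it extends continuously to all of $X$: the limiting endpoints at the lifted punctures are finite, and the peripheral elements fix them. Hence $\phi$ attains its maximum on the compact surface $X$.

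We now analyze the maximum locally.
\begin{itemize}
\item \emph{At a regular point $z$ of $q$.} On a flat rectangle, $f=\iota\circ u$ with $u=\operatorname{Im}\int\sqrt q$ a nonconstant harmonic function. The distance from $p$ to points of the isometric interval $\iota(\R)$ is a convex function of the interval parameter, of the form $|s-s_0|+c$ with a possible flat part. So $\phi=g\circ u$ with $g$ convex in $u$. Since $u$ is open and harmonic, $g\circ u$ satisfies a maximum principle: it cannot have an interior local maximum unless $g$ is locally constant on the range of $u$.
\item \emph{At zeros of $q$ and at marked simple poles.} Near these points $q$ has the local $k$-pronged structure with $k\ge-1$, and $f$ is assembled from finitely many such height models on sectors. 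The same subharmonicity should pass to the limit, since $\phi$ is a limit of the plurisubharmonic-type functions $(2\lambda_j)^{-1}d_{\HH^3}(P_j,H_j)$.
\end{itemize}

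The cleanest route avoids the local case analysis. We show that $\phi$ is subharmonic on $X^\circ$ and bounded, so it extends subharmonically across the finitely many punctures. It is then constant on the compact surface $X$. A constant distance to $p$ forces $g$ to be constant on intervals where $u$ varies. But $g(s)=|s-s_0|+c$ is constant only on the part of the interval $[\,\cdot\,]$ closest to $p$, which is a single point unless that segment is degenerate. Hence $u$ must be locally constant, contradicting the fact that $\sqrt q\neq0$ on regular rectangles.

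The main obstacle is the subharmonicity (or maximum principle) for $\phi$ at the zeros of $q$ and at the cone-type points. We would first try proving it directly from the local structure. Near such a point, $f$ maps a small disk into a finite subtree, with pieces given by the height functions of the sectors. A local maximum of $\phi$ at the singular point would require every sector's image to move toward $p$. That is impossible: adjacent horizontal sectors map to opposite directions of the tree away from the image point, and at most one direction from $f(z)$ points toward $p$.

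If that fails, we would instead pass to the limit. The functions $d_{\HH^3}(P_j,H_j)$ are subharmonic, being distances to a point composed with a harmonic-like map. We would need them to converge uniformly after rescaling, which Proposition~\ref{prop:local-limit} only provides on regular rectangles; this is exactly where the difficulty lies.
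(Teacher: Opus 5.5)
Your argument follows the paper's proof step for step. A common fixed point comes from the Helly property, since generators and their products are elliptic. The distance to that point is invariant and descends to $X^\circ$. On regular rectangles it is a convex function of the harmonic height $u$, hence subharmonic. It extends across finitely many exceptional points and is then constant by the maximum principle on the compact curve, which contradicts the local interval model of Proposition~\ref{prop:tree-map}. The paper uses $d_T(f,a)^2$, but in the fixed-point case your unsquared $d_T(p,f)$ is equally subharmonic. On an interval it equals $c+|s-s_0|$ with no flat part.

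As written, however, you leave open the step you call the main obstacle, and it is not an obstacle. You do not need subharmonicity of $\phi$ at the zeros of $q$ (which lie in $X^\circ$) or at the marked points. It suffices that $\phi$ is subharmonic on the complement of the finite set $D\cup q^{-1}(0)$, and that it is bounded near each point of that set. Near zeros of $q$ this follows from continuity of the extended map, and near punctures from the endpoint bounds. A subharmonic function bounded above near a closed polar set, in particular a finite set, extends uniquely to a subharmonic function across it. This is the same removability you already invoke at the punctures, and the paper applies it ``across every omitted point''. Replace ``subharmonic on $X^\circ$'' by ``subharmonic on $X^\circ\setminus q^{-1}(0)$'' and the proof is complete.

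Neither fallback route is needed, and both would require more than the paper provides. The sector argument presupposes that near a zero of $q$ the map $f$ folds the prongs into distinct directions of $T$. Proposition~\ref{prop:tree-map} gives only a continuous extension there. The limiting argument presupposes that $d_{\HH^3}(P_j,H_j)$ is subharmonic. Nothing justifies this, because $H_j$ is not a harmonic map.
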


\begin{proof}
If all lengths vanish, all group elements are elliptic. The fixed subtrees of two generators intersect, since otherwise their product has positive length. By the finite Helly property for subtrees, the generators have a common fixed point $a$; see \cite[Sections~1.2--1.5]{CM87}.

The function $d_T(f,a)^2$ descends to $X^\circ$. On a regular rectangle it is a convex function of the real harmonic coordinate in \eqref{eq:height-model}, hence subharmonic. A continuous convex function of finitely many harmonic functions is subharmonic: this follows by the chain rule for a smooth convex function and by local convolution in general. The endpoint bounds allow us to extend the function subharmonically across every omitted point. By the maximum principle on the compact connected curve, the resulting function is constant \cite[Chapters~2--3]{Ran95}. Distance to a fixed point in a tree cannot be constant on a nondegenerate interval, contradicting the local form of $f$.
\end{proof}

\begin{proposition}\label{prop:properness}
For arbitrary fixed complex coefficients $\boldsymbol\delta$, the maps $F_X$ and $\Hol_X$ are proper.
\end{proposition}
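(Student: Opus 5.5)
The argument is by contradiction, combining the tree limit with the nonvanishing of its length function. Since $\tau\colon\X(\Gamma)\to\C^N$ is proper (it is a closed embedding, because the finite traces generate the coordinate ring, or at least is finite), properness of $\Hol_X$ follows from properness of $F_X=\tau\circ\Hol_X$. So it suffices to show that $F_X^{-1}$ of a bounded set is bounded in $\A_X$.

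Suppose instead that there is a sequence $Q_j\in\A_X$ with $F_X(Q_j)$ bounded but $\norm{Q_j-Q_0}\to\infty$. After passing to a subsequence, we put it in the normalized form \eqref{eq:normalized}: $Q_j=Q_0+T_jq_j$ with $q_j\to q\ne0$ and $\lambda_j=\sqrt{T_j}$. This is a sequence of the type \eqref{eq:general-sequence}, so Proposition~\ref{prop:tree-map} applies and yields the limiting $\R$-tree $T$ of \eqref{eq:cone} with its isometric $\Gamma$-action.

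Boundedness of the $N$ traces propagates to every word. By the polynomial expressions of Section~\ref{subsec:traces}, $\tr\rho_{Q_j}(\eta)$ is bounded for each fixed $\eta\in\Gamma$. Because $|\tr C|=|\lambda+\lambda^{-1}|$, a bounded trace bounds the eigenvalue modulus, so $L(\rho_{Q_j}(\eta))$ stays bounded for each fixed $\eta$. The limit formula \eqref{eq:length-limit}, which rests on Lemma~\ref{lem:displacement} applied at the basepoint of the ultralimit, then gives
\[
\ell_T(\eta)=\lim_{\mathfrak u}L(\rho_{Q_j}(\eta))/(2\lambda_j)=0
\]
for every $\eta$. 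This contradicts Lemma~\ref{lem:nonzero-length}, so $F_X$ is proper, and hence so is $\Hol_X$.

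The main point requiring care is the passage from boundedness in $\X(\Gamma)$ to boundedness of the traces, together with the reduction for $\Hol_X$. A compact subset of the affine variety $\X(\Gamma)$ has bounded images under all regular functions, in particular under the $t_I$. Conversely, a preimage under $\Hol_X$ of a compact set $K$ lies in $F_X^{-1}(\tau(K))$, and $\tau(K)$ is compact. Properness of $F_X$ therefore gives properness of $\Hol_X$ directly, and no claim that $\tau$ is an embedding is needed. The only other subtlety is that \eqref{eq:length-limit} must hold for arbitrary $\eta$, not just the generators. This is supplied by the based displacement bounds of Lemma~\ref{lem:growth}, as used in the proof of Proposition~\ref{prop:tree-map}, which ensure that the action on the pointed ultralimit is defined for every fixed word.
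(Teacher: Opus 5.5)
Your proposal is correct and follows essentially the same route as the paper. Bounded finite traces bound the traces and translation lengths of every word, so \eqref{eq:length-limit} makes the normalized tree limit have zero length function, contradicting Lemma~\ref{lem:nonzero-length}; properness of $\Hol_X$ then follows from $\Hol_X^{-1}(K)\subset F_X^{-1}(\tau(K))$, and you were right to drop the unneeded claim that $\tau$ is a closed embedding.
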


\begin{proof}
An escaping sequence with bounded finite traces has bounded traces for every fixed word by \eqref{eq:trace-identities}. The eigenvalues of its monodromy matrices, and hence their translation lengths, are bounded word by word. Normalize the sequence as in \eqref{eq:normalized}. Equation~\eqref{eq:length-limit} makes every length in the limiting tree zero, contrary to Lemma~\ref{lem:nonzero-length}. Thus the inverse image under $F_X$ of a compact set is bounded. It is closed by continuity, hence compact in $\A_X\simeq\C^m$.

For compact $C\subset\X(\Gamma)$, the closed set $\Hol_X^{-1}(C)$ is contained in the compact set $F_X^{-1}(\tau(C))$. Hence the holonomy map is proper as well.
\end{proof}

\begin{corollary}\label{cor:finite-holonomy}
Every complex character has finitely many realizations in $\A_X$. With its reduced analytic structure, the image $\mathcal Y_X=\Hol_X(\A_X)$ is closed, irreducible, and of dimension $m$, and $\Hol_X:\A_X\to\mathcal Y_X$ is finite holomorphic. The same conclusions hold for the trace map onto its image.
\end{corollary}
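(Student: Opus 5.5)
The corollary follows from Proposition~\ref{prop:properness} combined with standard facts about proper holomorphic maps; no new estimates are needed.

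\emph{Step 1: fibers are finite.} Fix a character $\chi$. The fiber $\Hol_X^{-1}(\chi)$ is a closed analytic subset of $\A_X\simeq\C^m$, and by properness it is compact. A compact analytic subset of $\C^m$ has only zero-dimensional components, since every coordinate function is constant on each compact irreducible component by the maximum principle. Hence the fiber is finite.

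\emph{Step 2: the image is a closed analytic set.} The target $\X(\Gamma)$ is an affine variety. Embed it in $\C^M$ by finitely many trace functions, for instance the coordinates of $\tau$ together with enough further trace functions to generate the coordinate ring. Then $\Hol_X$ becomes a proper holomorphic map $\A_X\to\C^M$ with finite fibers. Remmert's proper mapping theorem gives that $\mathcal Y_X=\Hol_X(\A_X)$ is a closed analytic subset.

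\emph{Step 3: irreducibility, dimension, and finiteness.} Irreducibility follows because $\A_X$ is irreducible: the image of an irreducible space under a proper holomorphic map is irreducible. A proper map with finite fibers has image of dimension equal to that of the source, so $\dim\mathcal Y_X=m$. Equip $\mathcal Y_X$ with its reduced structure. The map $\A_X\to\mathcal Y_X$ is proper with finite fibers, so it is finite holomorphic in the sense of analytic geometry; the direct image of the structure sheaf is coherent by Grauert.

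\emph{Step 4: the trace map.} The same argument applies verbatim to $F_X=\tau\circ\Hol_X$, whose properness is also part of Proposition~\ref{prop:properness}. The target $\C^N$ is already affine space, so no embedding is needed.

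The only point that needs care is Step 2. Remmert's theorem is applied in an ambient complex manifold, which is why the affine character variety is realized as a closed subvariety of some $\C^M$ by trace functions. Once that embedding is fixed, each step is a direct citation.
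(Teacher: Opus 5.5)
Your proposal is correct and follows essentially the same route as the paper: compact analytic fibers in $\C^m$ are finite by the maximum principle (the paper applies it to the strictly plurisubharmonic function $\norm z^2$, you apply it to the coordinate functions), and then Remmert's theorem, irreducibility of the source, and the dimension theorem for finite maps give the rest. Your explicit embedding of $\X(\Gamma)$ into some $\C^M$ by trace functions is harmless but not needed, since Remmert's proper mapping theorem already holds for proper holomorphic maps into arbitrary complex spaces.
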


\begin{proof}
Each fiber is a compact analytic subset of an affine complex space and is therefore finite. Indeed, on any hypothetical positive-dimensional component the strictly plurisubharmonic function $\norm z^2$ attains a maximum, contradicting the maximum principle on a local nonconstant analytic curve through that point \cite{Chi89}. A proper holomorphic map with finite fibers is finite, and its image is analytic by Remmert's theorem \cite[Chapter~I, Section~2]{Fre16}. The image is irreducible because the source is irreducible, and it has dimension $m$ by the dimension theorem for finite maps.
\end{proof}

A prescribed conjugacy class of scalar representations has finitely many realizations because they lie in a single character fiber. The same holds for a prescribed projective representation: any existing scalar lifts differ by elements of the finite group $\Hom(\Gamma,\{\pm1\})$. A prescribed affine projective character also has a finite fiber. Indeed, the regular functions $t_I^2$ are independent of the choice of scalar lift. Fixing the projective character fixes these squares, leaving at most $2^N$ possible finite-trace vectors, each with a finite fiber. Since coordinatewise squaring is finite and proper, $(F_{X,I}^2)_I$ is also finite holomorphic onto its reduced analytic image.

The cardinalities of the fibers of a fixed map are locally bounded: the coherent algebra $(\Hol_X)_*\mathcal O_{\A_X}$ has finitely many generators locally, which bound the dimension of its fiber algebra and hence the number of distinct points \cite[Chapter~I, Section~2]{Fre16}. Taking a finite cover then gives a bound over any compact subset of the target. The image is Stein as a closed analytic subspace of the affine character variety. If $\nu:\mathcal Y_X^\nu\to\mathcal Y_X$ is its normalization, normality of $\A_X$ gives a finite factorization through $\mathcal Y_X^\nu$. Writing $d_h$ for the generic degree of this finite map, finite mapping theory gives
\[
 \#\Hol_X^{-1}(\chi)\le d_h\,\#\nu^{-1}(\chi).
\]
The normalization accounts for the branches of the image at a singular point; the map is an unramified finite covering off its critical values on the regular image \cite{Chi89,Fre16}.

Define the logarithmic size of the finite traces by
\begin{equation}\label{eq:log-trace-size}
 \Lambda_X(Q)=\log\left(2+\max_I|F_{X,I}(Q)|\right).
\end{equation}
If the trace list is empty, we define the maximum to be zero. We extend the growth estimate for closed curves in \cite[Theorem~5.2]{Dum17} to the present setting of regular-singular equations.

\begin{proposition}\label{prop:trace-growth}
There are constants $c,C,R>0$ such that
\begin{equation}\label{eq:trace-growth}
 c\sqrt{\norm{Q-Q_0}}\le\Lambda_X(Q)\le C\sqrt{1+\norm{Q-Q_0}}
\end{equation}
for $\norm{Q-Q_0}\ge R$. In particular, for some $C_0>0$,
\begin{equation}\label{eq:accessory-trace-bound}
 \norm{Q-Q_0}\le C_0\bigl(1+\Lambda_X(Q)^2\bigr)\quad(Q\in\A_X).
\end{equation}
\end{proposition}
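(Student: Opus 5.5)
The upper bound follows from Lemma~\ref{lem:growth}; the lower bound comes from a compactness argument using the limiting tree, in the same way as the proof of Proposition~\ref{prop:properness}.

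\emph{Upper bound.} For each $I$, let $\eta_I=\sigma_{i_1}\cdots\sigma_{i_k}$. For $C\in\SL_2(\C)$ with eigenvalues $\lambda^{\pm1}$ we have $|\tr C|\le 2\max\{|\lambda|,|\lambda|^{-1}\}=2e^{L(C)/2}$. Lemma~\ref{lem:growth} then gives $|F_{X,I}(Q)|\le 2\exp\bigl(\tfrac12 C_{\eta_I}\sqrt{1+\norm{Q-Q_0}}\bigr)$. Taking the maximum over the finitely many $I$ and the logarithm yields $\Lambda_X(Q)\le C\sqrt{1+\norm{Q-Q_0}}$ for all $Q$.

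\emph{Lower bound.} Argue by contradiction. If the lower bound fails for every $c,R$, there is a sequence $Q_j$ with $T_j=\norm{Q_j-Q_0}\to\infty$ and $\Lambda_X(Q_j)/\sqrt{T_j}\to0$. Normalize it as in \eqref{eq:normalized}, so that $\lambda_j=\sqrt{T_j}$ and $q_j\to q\ne0$, and form the tree $T$ of \eqref{eq:cone} along a nonprincipal ultrafilter.

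\emph{Transfer to all words.} The key step is to pass from the finite traces to traces of every fixed word. By \eqref{eq:trace-identities}, the trace of any word $\eta$ is a polynomial $P_\eta$ of some degree $d_\eta$ in the $N$ finite traces. Hence
\[
|\tr\rho_{Q_j}(\eta)|\le C'_\eta\bigl(2+\max_I|F_{X,I}(Q_j)|\bigr)^{d_\eta},
\]
so that $\log(2+|\tr\rho_{Q_j}(\eta)|)\le d_\eta\Lambda_X(Q_j)+O(1)=o(\lambda_j)$. If $|\lambda|\ge1$ is the larger eigenvalue modulus, then $|\lambda|\le|\tr|+1$, so $L(\rho_{Q_j}(\eta))\le 2\log(|\tr\rho_{Q_j}(\eta)|+1)=o(\lambda_j)$. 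By \eqref{eq:length-limit}, $\ell_T(\eta)=0$ for every $\eta\in\Gamma$, which contradicts Lemma~\ref{lem:nonzero-length}. This gives constants $c,R$ with the lower bound for $\norm{Q-Q_0}\ge R$.

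\emph{Consequence \eqref{eq:accessory-trace-bound}.} For $\norm{Q-Q_0}\ge R$, the lower bound gives $\norm{Q-Q_0}\le c^{-2}\Lambda_X(Q)^2$. For $\norm{Q-Q_0}<R$, the bound $\norm{Q-Q_0}\le R$ holds trivially. Taking $C_0=\max\{R,c^{-2}\}$ covers both cases.

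The main point requiring care is that the contradiction argument produces a uniform constant $c$. This works because Lemma~\ref{lem:nonzero-length} applies to every limit direction $q$ on the unit sphere of $V$, and the subsequence extraction in \eqref{eq:normalized} is always available. It is also worth checking that the degree bound $d_\eta$ in the polynomial trace identities is independent of $j$. It is, since each $P_\eta$ is a fixed polynomial determined by the word $\eta$ alone.
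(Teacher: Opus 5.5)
Your proposal is correct and follows essentially the same route as the paper. The upper bound comes from Lemma~\ref{lem:growth} via $|\tr C|\le 2e^{L(C)/2}$. The lower bound is by contradiction: the fixed polynomial trace identities transfer $o(\sqrt{T_j})$ control to every word, and $|\lambda|\le|\tr C|+1$ then makes all limiting tree lengths vanish, contradicting Lemma~\ref{lem:nonzero-length}; \eqref{eq:accessory-trace-bound} follows by squaring and enlarging the constant on a ball.
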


\begin{proof}
For $C\in\SL_2(\C)$ choose an eigenvalue $\lambda$ of modulus at least one. Then
\[
 |\tr C|\le2e^{L(C)/2},\quad |\lambda|\le|\tr C|+1.
\]
Applying Lemma~\ref{lem:growth} to the finitely many coordinate words gives the upper bound everywhere. If the lower bound fails, choose $Q_j$ with $T_j=\norm{Q_j-Q_0}\to\infty$ and $\Lambda_X(Q_j)/\sqrt{T_j}\to0$, and extract a nonzero normalized limit $q$. Since each word trace is a fixed polynomial in the finite traces,
\[
 \log\bigl(2+|\tr\rho_{Q_j}(\eta)|\bigr)\le C_\eta\bigl(1+\Lambda_X(Q_j)\bigr).
\]
By the eigenvalue bound, every normalized word length tends to zero. This contradicts \eqref{eq:length-limit} and Lemma~\ref{lem:nonzero-length}. We obtain \eqref{eq:accessory-trace-bound} by squaring the lower estimate and increasing the constant on a compact ball.
\end{proof}

Consequently, a continuous family of connections over a connected parameter space is constant if its characters lie in a fixed finite set. An entire holomorphic family $\C^k\to\A_X$ with relatively compact character image is also constant: properness bounds all its coefficient functions, and Liouville's theorem applies.

 
\subsection{Smooth marked families}\label{subsec:relative}

We now establish uniform estimates for the counting results in families. Let $\pi:\mathcal C\to B$ be a smooth proper holomorphic family of compact connected curves, with pairwise disjoint holomorphic marked sections. We work in a local marked parameter chart $B$ with a holomorphic theta characteristic and a compatible scalar lift. Suppose we have a holomorphic affine family of accessory spaces
\begin{equation}\label{eq:relative-accessory}
 \A_b=Q_0(b)+V_b,\quad V_b=H^0(X_b,K_{X_b}^{\otimes2}(D_b)),\quad X_b=\pi^{-1}(b),
\end{equation}
with fixed coefficients $\boldsymbol\delta$, constant rank $m$, a holomorphic origin $Q_0$, and a holomorphic frame $e_1(b),\ldots,e_m(b)$. We use the coefficient norm defined by this frame. Through the marking, we identify the fundamental groups with a fixed $\Gamma$ and fix generators $\sigma_1,\ldots,\sigma_r$. Put
\[
 \mathcal W=\{\sigma_i:1\le i\le r\}\cup\{\sigma_i\sigma_k:1\le i<k\le r\}.
\]

\begin{proposition}\label{prop:relative-properness}
In the setting \eqref{eq:relative-accessory}, the relative maps
\[
 (b,Q)\longmapsto(b,\Hol_{X_b}(Q)),\quad
 (b,Q)\longmapsto(b,F_{X_b}(Q))
\]
are holomorphic and proper. For each compact $K\Subset B$ and $m>0$, there are $c_K,C_K,R_K>0$ such that, for $b\in K$ and $T=\norm{Q-Q_0(b)}\ge R_K$,
\begin{equation}\label{eq:relative-growth}
 c_K\sqrt T\le\log\left(2+\max_{w\in\mathcal W}|\tr\rho_Q(w)|\right)
 \le C_K\sqrt{1+T}.
\end{equation}
\end{proposition}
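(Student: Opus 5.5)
The plan is to run the arguments of Lemma~\ref{lem:growth}, Propositions~\ref{prop:local-limit}, \ref{prop:tree-map}, \ref{prop:properness} and \ref{prop:trace-growth} with the base point $b$ also varying in a compact set.

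\emph{Holomorphy.} Over a small ball in $B$, trivialize $\mathcal C$ smoothly near fixed representative loops. Cover these loops by finitely many holomorphic coordinate disks depending holomorphically on $b$, with the marked sections kept away from them. The coefficients $Q_0(b)+\sum_k x_ke_k(b)$ are then holomorphic in $(b,x,z)$. The jet transitions \eqref{eq:jet-transition} come from the holomorphic theta characteristic, so they are holomorphic in $(b,z)$. Picard iteration along the chains, as in Section~\ref{subsec:traces}, therefore gives transfer matrices that are holomorphic in $(b,x)$. Hence both relative maps are holomorphic.

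\emph{Upper bound in \eqref{eq:relative-growth}.} On $K$, all coordinate sets, path speeds and overlap functions $t_w/t^2$ are uniformly bounded. The bound $|Q_z|\le C_K\mu^2$ is also uniform, because the frame $e_k(b)$ and the origin $Q_0(b)$ are continuous. The Gronwall argument of Lemma~\ref{lem:growth} then gives $L(\rho_Q(w))\le C_K\sqrt{1+T}$ for each $w\in\mathcal W$. Together with $|\tr C|\le 2e^{L(C)/2}$, this yields the right-hand inequality.

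\emph{Lower bound.} Argue by contradiction. Choose $b_j\in K$, $Q_j=Q_0(b_j)+T_jq_j$ with $T_j\to\infty$, and $\max_{w\in\mathcal W}\log(2+|\tr\rho_{Q_j}(w)|)=o(\sqrt{T_j})$. Pass to a subsequence with $b_j\to b_\infty\in K$ and coefficient vectors of $q_j$ converging to a unit vector. Then $q_j$ converges, in the holomorphically varying coordinates, to a nonzero $q\in V_{b_\infty}$, with locally uniform convergence of all derivatives on compact sets away from the marked sections.

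The next step is to check that Proposition~\ref{prop:local-limit} only used uniform $C^k$ convergence of $q_j\to q$ and of $Q_0$ in a fixed coordinate disk, together with the $O(\lambda_j^{-1})$ control of \eqref{eq:balanced-transition}. These inputs survive when the complex structure varies, after pulling everything back to the disks of $X_{b_\infty}$ via the holomorphic local coordinates. The ultralimit construction then produces an $\R$-tree $T$ with an action of $\Gamma$ and an equivariant map $f$ modeled on $\operatorname{Im}\int\sqrt q$ on $\widetilde{X_{b_\infty}^\circ}$. The flat-length bounds near zeros and simple poles, and the subharmonicity argument, are properties of the limit $q$ on the single curve $X_{b_\infty}$. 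So Proposition~\ref{prop:tree-map} and Lemma~\ref{lem:nonzero-length} apply verbatim, and some marked length is nonzero.

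The remaining step is to pass from $\mathcal W$ to all words. By \eqref{eq:trace-identities}, the traces $t_I$ for $|I|\ge3$ are polynomials in the traces over $\mathcal W$ and the lower ones. For instance, $\tr(ABC)$ is determined up to a two-valued choice by the traces of single and pairwise products, via the standard quadratic relation. So $\log(2+|t_I|)$ is bounded by $C(1+\max_{\mathcal W}\log(2+|\tr|))$. Then every word length is $o(\sqrt{T_j})$, and \eqref{eq:length-limit} contradicts Lemma~\ref{lem:nonzero-length}.

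\emph{Properness.} Take a compact set in $B\times\X(\Gamma)$ or $B\times\C^N$. Its projection to $B$ lies in some compact $K$, and the traces over $\mathcal W$ are bounded on it. The lower bound then bounds $T$, so the preimage lies in a set of the form $\{b\in K,\ \norm{Q-Q_0(b)}\le R\}$. This set is compact because the frame is continuous. The preimage is closed by continuity, hence compact.

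The main obstacle I expect is the uniformity of the local limit under varying complex structure. One has to make sure that the limiting differential lives on $X_{b_\infty}$, and that the coordinate discrepancies and Liouville error terms $W_j$ are bounded independently of $j$ as $b_j$ moves. I would handle this by fixing, near $b_\infty$, a holomorphic family of coordinate charts and treating $(b_j,q_j)$ as one convergent sequence of coefficient functions on fixed planar disks.
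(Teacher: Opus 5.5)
Your proposal is correct. Its overall architecture matches the paper's proof: uniform charts near $b_\infty$, uniform Gronwall and Liouville bounds, a single ultralimit tree for $q$ on $X_{b_\infty}$, nonzero length from Lemma~\ref{lem:nonzero-length}, and \eqref{eq:length-limit} in the moving family. Your holomorphically varying charts do the job of the paper's section-preserving smooth trivialization.

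The real difference is how you pass from all words to the finite set $\mathcal W$.

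The paper stays in the tree. Suppose $\ell_T$ vanished on every $\sigma_i$ and every $\sigma_i\sigma_k$. Then each generator is elliptic. Two elliptics with disjoint fixed subtrees have a hyperbolic product, so the fixed subtrees meet pairwise. Helly's property then gives a global fixed point, contradicting Lemma~\ref{lem:nonzero-length}. Hence some $w\in\mathcal W$ has positive limiting length, and this gives the lower bound and properness at once.

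You argue algebraically instead, claiming that the $\mathcal W$-traces control all finite traces polynomially. That claim is true, but it does not follow from \eqref{eq:trace-identities}. Those identities only rewrite arbitrary words in terms of all $N$ ordered traces $t_I$; they never shorten an ordered product. Moreover, $t_I$ with $|I|=3$ is not a polynomial in the $\mathcal W$-traces at all. The correct input has two parts:
\begin{itemize}
\item Each $t_{ijk}$ is a root of the monic Fricke quadratic $x^2-Px+Q$, whose coefficients are polynomials of degree at most three in the traces over $\mathcal W$. Cauchy's root bound then gives $|t_{ijk}|\le C(2+\max_{\mathcal W}|\tr|)^3$.
\item Each $t_I$ with $|I|\ge4$ is a polynomial in shorter ordered traces, by the classical four-letter trace identity for $2\times2$ matrices applied inductively.
\end{itemize}
With these two facts your estimate $\log(2+|t_I|)\le C(1+\max_{\mathcal W}\log(2+|\tr|))$ holds, and the rest of your argument goes through.

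Your route yields an algebraic statement independent of the limit construction: the $\mathcal W$-trace map is finite on the character variety, so any properness or growth result for the full trace list transfers to $\mathcal W$. The paper's route needs no trace algebra beyond the Helly argument already used in Lemma~\ref{lem:nonzero-length}.
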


\begin{proof}
We begin with the geometric construction needed for uniformity. A proper smooth submersion with disjoint sections admits a local smooth trivialization that preserves the sections. To construct such a trivialization, we choose horizontal lifts of coordinate fields on the base and add vertical corrections so that the lifts agree with the derivatives of the sections. Since the section images have disjoint neighborhoods, we can choose these corrections independently. Over a smaller coordinate ball, the time-one flow of the lift of the radial field from its center stays over a compact base set; properness gives existence of the flow and its inverse. The resulting diffeomorphisms preserve each section and depend smoothly on the base parameter. We can therefore transport fixed topological paths through finitely many relatively compact relative charts, with uniformly bounded $C^1$ coordinate parametrizations.

Suppose $b_j\to b_0$ and $Q_j=Q_0(b_j)+T_jq_j$, where $T_j=\norm{Q_j-Q_0(b_j)}\to\infty$. After passing to a subsequence, the normalized coefficient vectors converge. In the holomorphic frame of $V_b$, their limit defines $q\ne0$ on $X_{b_0}$. On compact relative coordinate disks, the coefficients of $q_j$ and all their fiber derivatives converge to those of $q$, by holomorphicity and Cauchy's estimates. At a marked section $z=0$, the functions $zq_{j,z}$ are holomorphic and converge in the same manner, so $q$ has at most simple poles.

Along the transported compact paths, the coefficients of $Q_0(b_j)$, the transition functions, and their derivatives are uniformly bounded; the coordinate derivatives are also bounded away from zero. With $\lambda_j=\sqrt{T_j}$, the coarse balanced transition has the form
\[
 \begin{pmatrix}t^{-1/2}&-t_w/(2\lambda_j t^{3/2})\\0&t^{1/2}\end{pmatrix}.
\]
Both this matrix and its inverse have uniformly bounded norms. Gronwall therefore gives uniform $O(\lambda_j)$ generator displacements. On any compact regular disk of the limit differential, the roots $\sqrt{q_j}$, their natural coordinates, and their inverse coordinate derivatives converge uniformly with the required derivatives. The functions $W_j$ in \eqref{eq:liouville} are uniformly bounded there. Consequently, the constants in \eqref{eq:shear-distance}--\eqref{eq:singular-value} can be chosen uniformly along this moving sequence. This proves the local height limit for transported points on the limiting curve, with the exact scale $1/(2\lambda_j)$.

We form a single pointed ultralimit and use the transported points to define a map from the punctured universal cover of $X_{b_0}$. The transition estimates ensure that these definitions agree on overlaps, while the marking gives equivariance. Its local image is the isometric height interval of $q$. Inequality~\eqref{eq:flat-path} now holds in this limit tree. The radial and angular estimates for the limit differential extend the map across its zeros and give peripheral endpoints, uniformly over all angular sectors after using the peripheral isometry. These estimates apply to the map in the limiting tree. The compact-surface maximum argument of Lemma~\ref{lem:nonzero-length}, on $X_{b_0}$, proves nonzero length. In fact, some word in the finite set $\mathcal W$ has positive length: if all these lengths vanished, the generators would be elliptic with pairwise intersecting fixed subtrees, and Helly's property would give a common fixed point. Finally, Lemma~\ref{lem:displacement} gives the exact wordwise length comparison in the moving family.

Monodromy is jointly holomorphic in $(b,Q)$. To verify this, we follow each marked loop through finitely many relative disks, use holomorphic point sections on overlaps to continue the initial-value solution, and apply the locally uniformly convergent Picard series to the coefficient and jet transition matrices. Local changes of representatives only conjugate monodromy. Hence traces and characters are holomorphic on the whole chart.

Suppose a sequence remains over a compact subset of the base and character space, but its coefficients are unbounded. After passing to a subsequence, we can apply the preceding construction. Each word in $\mathcal W$ has bounded trace, so \eqref{eq:length-limit} gives zero length on $\mathcal W$, a contradiction. The same reasoning applies directly to bounded finite traces. Hence the coefficients are bounded over compact target sets locally. Finite-dimensional compactness and continuity give properness of both relative maps. Rank zero is immediate from the affine origin.

To prove the lower estimate in \eqref{eq:relative-growth}, suppose that a sequence $b_j\in K$, $T_j\to\infty$, satisfies
\[
 \max_{w\in\mathcal W}\frac{L(\rho_{Q_j}(w))}{2\sqrt{T_j}}\longrightarrow0.
\]
Compactness of $K$ and the normalized coefficient sphere gives the limit construction above, contradicting its positive length on $\mathcal W$. We therefore obtain a uniform positive lower bound beyond a fixed threshold. If $w$ realizes this lower bound, then
\[
 |\tr\rho_Q(w)|\ge e^{c\sqrt T}-e^{-c\sqrt T}.
\]
After increasing the threshold if necessary, we obtain the lower logarithmic estimate. The upper estimate follows from the uniform compact-path Gronwall bounds and $|\tr C|\le2e^{L(C)/2}$.
\end{proof}

These conclusions hold on compact subsets of a smooth marked chart with the data in \eqref{eq:relative-accessory}. This is also a uniform estimate for $\Lambda_{X_b}$, since its finite trace list contains $\mathcal W$.


\section{Comparison of limiting quadratic differentials}\label{sec:comparison}

Let $X$ be a compact connected Riemann surface, let $D$ be its marked set, and put $X^\circ=X\setminus D$ and $\Gamma=\pi_1(X^\circ,x_0)$. We consider the action on a complete tree $T$ and the map $f:\widetilde{X^\circ}\to T$ constructed in Section~\ref{sec:trees} from a nonzero limiting differential $q$. The marked length function is denoted by $\ell_T$. It is nonzero by Lemma~\ref{lem:nonzero-length}. We seek to recover $q$ from the information shared by two such actions, using the exact normalization in \eqref{eq:length-limit}.

 
\subsection{Minimal subtrees and harmonic coordinates}\label{subsec:minimal}

\begin{definition}[Abelian length functions]\label{def:abelian-length}
A marked length function of an action of $\Gamma$ on an $\R$-tree is \emph{abelian} if it has the form $\eta\mapsto|c(\eta)|$ for a homomorphism $c:\Gamma\to\R$. Otherwise it is \emph{nonabelian}. We follow the terminology of Culler--Morgan for semisimple actions. In particular, actions preserving a line, including those with reflections, are semisimple \cite{CM87}.
\end{definition}

\begin{lemma}\label{lem:minimal-tree}
The action on $T$ has a unique minimal invariant subtree $T_{\min}$, and $f$ takes values in its completion $T_c$, realized as its closure in $T$. Two such actions with the same nonabelian marked length function have equivariantly isometric completed minimal subtrees.
\end{lemma}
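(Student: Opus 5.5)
The proof falls into three steps. First, existence and uniqueness of $T_{\min}$. Second, the fact that $f$ lands in the completion $T_c$ of $T_{\min}$. Third, rigidity for nonabelian length functions.

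For the first step, Lemma~\ref{lem:nonzero-length} gives an element $\eta\in\Gamma$ with $\ell_T(\eta)>0$, so the action has a hyperbolic element. By Culler--Morgan \cite[Sections~2--3]{CM87}, the union of the axes of all hyperbolic elements is then an invariant subtree. It is contained in every invariant subtree, and is therefore the unique minimal one, $T_{\min}$. Its closure in the complete tree $T$ is again a subtree and is complete. We take this closure as $T_c$.

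For the second step, I would use a convexity argument. Consider the equivariant nearest-point projection $\pi:T\to T_c$, which exists because $T_c$ is a closed subtree of an $\R$-tree. The function
\[
\phi(x)=d_T\bigl(f(x),T_c\bigr)
\]
is $\Gamma$-invariant, so it descends to $X^\circ$.
\begin{itemize}
\item Distance to a closed convex subset of a tree is convex along geodesics. On each regular flat rectangle, $f=\iota\circ u$ by \eqref{eq:height-model}. Hence $\phi^2$ is locally a convex function of the harmonic coordinate $u$, and so it is subharmonic.
\item By the endpoint bounds of Proposition~\ref{prop:tree-map}, in particular the remark that invariant distances to invariant closed targets are bounded near punctures and zeros, $\phi$ is bounded there. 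It therefore extends subharmonically across the finitely many omitted points.
\item The maximum principle on the compact surface $X$ makes $\phi$ constant, say $\phi\equiv c$.
\end{itemize}
Suppose $c>0$. On a regular rectangle, $f$ traces a nondegenerate interval $\iota(u)$ whose distance to $T_c$ is constant. In a tree, this forces $\iota$ to be contained in a single fibre of $\pi$, and within that fibre the distance is the distance to one point. As in the proof of Lemma~\ref{lem:nonzero-length}, distance to a point cannot be constant along a nondegenerate segment, which is a contradiction. Hence $c=0$ and $f(\widetilde{X^\circ})\subset T_c$.

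For the third step, I invoke the Culler--Morgan rigidity theorem \cite[Theorem~3.7]{CM87}. It says that two semisimple minimal $\Gamma$-trees with the same nonabelian length function are equivariantly isometric. The minimal subtree of an action with a hyperbolic element is semisimple in their sense. If the length function is nonabelian, the action on $T_{\min}$ is irreducible: it is not a line action and has no fixed end. The theorem then gives an equivariant isometry $T_{\min}\to T'_{\min}$. This extends uniquely to the completions, because isometries are uniformly continuous and the completions are the closures in complete trees. The extension remains equivariant by density.

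The main obstacle is the second step. The subharmonic extension needs the boundedness of $\phi$ near lifted punctures to be uniform over all angular sectors. It also needs a careful argument that constant positive distance to $T_c$ is incompatible with the local height model. I expect the first point to follow from the fixed peripheral endpoint in Proposition~\ref{prop:tree-map}. For the second, I would use that the projection $\pi$ is locally constant on the interval, which reduces the problem to distance from a single point.
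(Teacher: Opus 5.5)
Your proposal is correct and follows the paper's proof essentially step for step: \cite[Proposition~3.1]{CM87} for $T_{\min}$, subharmonicity of $d_T(f,T_c)^2$ on regular rectangles with the endpoint bounds and the maximum principle, the slope-$\pm1$ contradiction for a positive constant, and then \cite[Theorem~3.7]{CM87} extended to the completions. The only slip is in how you justify semisimplicity in the last step, and it is easy to fix:
\begin{itemize}
\item Not every minimal action with a hyperbolic element is semisimple; an action fixing an end of a tree that is not a line is not.
\item A nonabelian length function does not exclude an invariant line; dihedral actions with reflections have nonabelian length functions.
\item What you need is the argument the paper uses: a nonsemisimple action fixes an end and therefore has an abelian length function \cite[Corollary~2.4]{CM87}. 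Hence a nonabelian length function forces semisimplicity, which is all Theorem~3.7 requires.
\end{itemize}
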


\begin{proof}
The existence and uniqueness of $T_{\min}$ follow from the nonzero length function and \cite[Proposition~3.1]{CM87}. Its closure $T_c$ in the complete tree is closed, convex, and invariant. The function $d_T(f,T_c)^2$ descends to $X^\circ$. On a regular rectangle, it is a convex function of the harmonic height $u=\operatorname{Im}\int\sqrt q$, hence is subharmonic. The endpoint estimates in Proposition~\ref{prop:tree-map} give boundedness at the punctures and zeros. Removability and the maximum principle make it constant on $X$ \cite[Chapters~2--3]{Ran95}. A level set of positive distance from a convex subtree contains no nondegenerate interval: along a segment outside the subtree the distance has slope $1$ or $-1$, apart from a possible minimum. Since $f$ has a nondegenerate local interval as image, the constant is zero. Hence $f$ takes values in $T_c$.

A nonsemisimple action has an abelian length function by \cite[Corollary~2.4]{CM87}. The actions in the last assertion are therefore semisimple. On their minimal subtrees, \cite[Theorem~3.7]{CM87} gives an equivariant isometry. It extends uniquely to metric completions, which proves the assertion.
\end{proof}

On the sphere, the peripheral argument excludes abelian lengths altogether. Each peripheral loop has zero length by Proposition~\ref{prop:tree-map}. Since these loops generate $\pi_1(\PP^1\setminus D)$, a homomorphism $c$ with $\ell_T=|c|$ would vanish identically. This contradicts Lemma~\ref{lem:nonzero-length}. For a general compact Riemann surface, we instead treat abelian lengths through their periods.

We first compare maps whose completed target trees have been identified. The argument uses only their local interval descriptions and endpoint bounds.

\begin{proposition}\label{prop:constant-distance}
Let $f_1,f_2:\widetilde{X^\circ}\to T$ be nonconstant maps equivariant for the same action of $\Gamma$ on a complete $\R$-tree. Suppose that outside the lifts of a finite set they have the form $f_1=\iota\circ u$ and $f_2=\jmath\circ v$, where $u,v$ are real harmonic coordinates with nonzero differential and $\iota,\jmath$ are isometric interval embeddings. Suppose also that the descended function $d_T(f_1,f_2)^2$ is bounded near every puncture and exceptional point. Then this function is constant, and $(\partial u)^2=(\partial v)^2$ on each common regular disk.
\end{proposition}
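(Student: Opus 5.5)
The plan is to follow the subharmonicity pattern of Lemma~\ref{lem:nonzero-length} and Lemma~\ref{lem:minimal-tree}, with the fixed convex target replaced by the second map. Put $\phi=d_T(f_1,f_2)^2$. By equivariance for the same action, $\phi$ descends to $X^\circ$.

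\emph{Step 1: $\phi$ is subharmonic.} On a common regular disk, both $f_1=\iota\circ u$ and $f_2=\jmath\circ v$ are defined. Then $d_T(\iota(s),\jmath(t))$ is a convex function of $(s,t)\in I\times J$. This holds because the distance function on $T\times T$ is convex along pairs of geodesics in a CAT(0) space, hence in an $\R$-tree, and $\iota,\jmath$ parametrize geodesics. Its square is again convex. Since $u,v$ are harmonic, $\phi$ is a continuous convex function of finitely many harmonic functions, hence subharmonic by the remark in Lemma~\ref{lem:nonzero-length}.

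\emph{Step 2: $\phi$ is constant.} By hypothesis $\phi$ is bounded near the finitely many exceptional points and punctures. Bounded subharmonic functions extend subharmonically across such points, giving a subharmonic function on the compact connected surface $X$. The maximum principle then makes it constant, so $d_T(f_1,f_2)\equiv c$.

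\emph{Step 3: from constant distance to $(\partial u)^2=(\partial v)^2$.} This is the main obstacle, since convexity must now be turned into rigidity. Take a regular disk and shrink it so that $u,v$ have nonvanishing differentials. Locally, the images are the segments $\iota(I)$ and $\jmath(J)$. The function $g(s,t)=d_T(\iota(s),\jmath(t))$ is convex. In a tree it is piecewise of the form $\pm(s-s_0)\pm(t-t_0)$ when the segments are apart, or $|s'-t'|$ when they overlap. The convex function $g^2$ composed with the harmonic map $(u,v)$ is constant. Wherever $g$ is smooth and positive, the Laplacian computation gives $\Delta(g^2)=2|\nabla(g\circ(u,v))|^2+2g\,\mathrm{Hess}\,g[\cdot]$. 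Both terms are nonnegative, so both vanish. The first gives $\pm du\pm dv=0$, hence $u=\pm v+\mathrm{const}$ on an open set, and therefore $(\partial u)^2=(\partial v)^2$. If $c=0$, the two maps coincide and $\iota(u)=\jmath(v)$, so again $u=\pm v+\mathrm{const}$ locally.

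It remains to handle points where $g$ is not smooth, namely branch points of the tree, where the linear pieces change. I would argue that the preimage of these values under $(u,v)$ has empty interior: it is contained in level sets $\{u=s_0\}$ or $\{v=t_0\}$ of harmonic functions with nonzero differential. So the identity holds on a dense open set. The equality $(\partial u)^2=(\partial v)^2$ involves holomorphic quadratic differentials, so it extends to the whole disk by continuity and analyticity. Since $d_T(f_1,f_2)\equiv c$ on a connected set, the sign choices are locally constant, and $(\partial u)^2=(\partial v)^2$ on every common regular disk.
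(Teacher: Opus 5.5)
Your proposal is correct and follows essentially the same route as the paper. Convexity of the tree distance along pairs of geodesics gives subharmonicity, boundedness and the maximum principle give constancy, and off the level sets $\{u=s_*\}\cup\{v=t_*\}$ the piecewise-affine form of the distance forces $du=\pm dv$, which then extends as $(\partial u)^2=(\partial v)^2$. The only differences are cosmetic. The paper first shrinks the disk so that the two image intervals are disjoint and uses the bridge formula $|s-s_*|+b+|t-t_*|$. Also, your Laplacian identity is unnecessary, since constancy of $g\circ(u,v)$ already gives $\nabla(g\circ(u,v))=0$ directly.
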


\begin{proof}
On a common regular disk, the function
\begin{equation}\label{eq:convex-distance}
 (s,t)\longmapsto d_T(\iota(s),\jmath(t))^2
\end{equation}
is convex. Indeed, distance between points moving along geodesics in a tree is convex; the same fact holds in $\operatorname{CAT}(0)$ spaces \cite[Chapter~II.2]{BH99}. Squaring preserves convexity for a nonnegative convex function. Composition with the harmonic map $(u,v)$ to $\R^2$ is subharmonic. For a nonsmooth convex function, we obtain the same conclusion by convolution on smaller rectangles and passage to the distributional limit.

Equivariance makes $d_T(f_1,f_2)^2$ single-valued on $X^\circ$. Boundedness at the exceptional points permits subharmonic extension across them. The resulting function is constant on the compact curve; write $d_T(f_1,f_2)=c$. If $c=0$, the maps agree, and their local interval coordinates differ by an isometry of intervals. Hence $du=\pm dv$.

Suppose $c>0$. Near a common regular point, shrink the disk so that the two compact image intervals are disjoint. Their unique joining segment has endpoints with interval coordinates $s_*$ and $t_*$. If $b$ is its length, then
\begin{equation}\label{eq:bridge}
 d_T(\iota(s),\jmath(t))=|s-s_*|+b+|t-t_*|.
\end{equation}
The endpoints of the joining segment may lie in the interiors of the two intervals, as illustrated in Figure~\ref{fig:interval-bridge}. The level sets $u=s_*$ and $v=t_*$ are closed and have empty interior, because both differentials are nonzero. On a nonempty connected subdisk avoiding their union, the two signs in \eqref{eq:bridge} are constant. Since the distance is constant, $du=\pm dv$ there. By squaring the $(1,0)$ parts and applying the identity theorem for holomorphic quadratic differentials, we extend the equality to the common regular disk.
\end{proof}

\begin{figure}[htbp]
\centering
\begin{tikzpicture}[x=1cm,y=1cm,font=\small,
  line cap=round,line join=round]
  \draw[line width=.45pt] (0,-1.55)--(0,1.55);
  \draw[line width=.45pt] (5.5,-1.55)--(5.5,1.55);
  \foreach \x in {0,5.5}
    \foreach \y in {-1.55,1.55}
      \draw[line width=.45pt] (\x-.09,\y)--(\x+.09,\y);
  \draw[line width=1pt] (0,1.1)--(0,0)--(5.5,0)--(5.5,-1.1);
  \fill (0,1.1) circle (1.5pt) node[left=5pt] {$\iota(s)$};
  \fill (5.5,-1.1) circle (1.5pt) node[right=5pt] {$\jmath(t)$};
  \draw[fill=white,line width=.6pt] (0,0) circle (2pt);
  \draw[fill=white,line width=.6pt] (5.5,0) circle (2pt);
  \node[below left=4pt] at (0,0) {$\iota(s_*)$};
  \node[above right=4pt] at (5.5,0) {$\jmath(t_*)$};
  \node[right=5pt] at (0,.6) {$|s-s_*|$};
  \node[above=4pt] at (2.75,0) {$b$};
  \node[left=5pt] at (5.5,-.6) {$|t-t_*|$};
\end{tikzpicture}
\caption{The bridge decomposition (\ref*{eq:bridge}), with both gates in the interiors of the two image intervals. The heavier path has length $|s-s_*|+b+|t-t_*|$.}
\label{fig:interval-bridge}
\end{figure}

 
\subsection{Abelian lengths and harmonic periods}\label{subsec:abelian}

We next recover the differential without identifying the target trees. The following argument applies on every compact $X$; on the sphere its conclusion is incompatible with $q\ne0$ because there are no nonzero holomorphic one-forms. Dumas's Theorem~C gives the corresponding reconstruction for limits of unbranched projective structures on closed curves \cite{Dum17}. In our setting, the endpoint estimates allow the argument to extend across the prescribed regular singularities.

An \emph{end} of a tree is an equivalence class of geodesic rays which eventually have a common tail. For a ray $r:[0,\infty)\to T$ representing an end, its Busemann function is
\begin{equation}\label{eq:busemann}
 b_T(x)=\lim_{t\to\infty}\bigl(d_T(x,r(t))-t\bigr).
\end{equation}
A different choice of ray changes $b_T$ only by an additive constant. The function is convex on segments and $1$-Lipschitz \cite[Chapter~II.8]{BH99}.

\begin{proposition}\label{prop:abelian-reconstruction}
Suppose the nonzero marked length function of the limiting action is abelian. There is a smooth real harmonic one-form $\alpha$ on $X$, with $(1,0)$ part denoted by $\alpha^{1,0}$, whose period homomorphism $c:\Gamma\to\R$ satisfies $|c(\eta)|=\ell_T(\eta)$, such that
\begin{equation}\label{eq:abelian-square}
 q=-4(\alpha^{1,0})^2=\omega^2,\quad
 \omega=2i\alpha^{1,0}\in H^0(X,K_X),\quad
 \operatorname{Im}\omega=\alpha.
\end{equation}
The marked length function determines $q$ uniquely on the fixed complex curve. In particular, the limiting differential is holomorphic and every zero has even order.
\end{proposition}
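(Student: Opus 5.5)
The plan is to use a Busemann function of an invariant end to build a harmonic function on $\widetilde{X^\circ}$ whose differential descends to $X$, and then to compare it with the local height coordinate $u=\operatorname{Im}\int\sqrt q$.

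The first step is to extract an end. Since $\ell_T=|c|$ with $c\neq0$, the action is either nonsemisimple, in which case it fixes an end by \cite{CM87}, or it preserves a line, on which $\Gamma$ acts by translations or reflections. In the line case, the abelian form $|c|$ and the fact that reflections have zero length while their products with translations also have length $|c|$ force the action on the line to be by translations $x\mapsto x+c(\eta)$, possibly after replacing $c$ by $-c$. In that case either end of the line is fixed. So in all cases there is an end $\epsilon$ fixed by $\Gamma$, and its Busemann function satisfies $b_T(\eta x)=b_T(x)\pm c(\eta)$, where the sign is fixed after choosing orientation and the shift is given by the translation length along $\epsilon$. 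For an end fixed by a nonsemisimple action, this shift is exactly the homomorphism with $|c|=\ell_T$; I would cite \cite[Section~2]{CM87}.

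The second step is to set $h=b_T\circ f$ on $\widetilde{X^\circ}$. On a regular flat rectangle, $f=\iota\circ u$, and a Busemann function restricted to an isometric interval is convex and $1$-Lipschitz, so it is locally of the form $\pm s+\text{const}$ away from at most one turning point. Then $h=\phi(u)$ with $\phi$ convex and $1$-Lipschitz. The function $h-\tilde c$, where $\tilde c$ is a primitive lift of a harmonic representative of the period class, would descend. Instead, I would argue directly that $h$ is harmonic. The key point is that $dh$ is a well-defined invariant one-form up to sign data, so $|dh|^2$ and $h$ are subharmonic. Consider the descended function $b_T(f_1)-\tilde\alpha$, where $\alpha$ is the harmonic form on $X$ with periods $c$. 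It is subharmonic, since $\tilde\alpha$ is harmonic, and it is single-valued by equivariance. Boundedness near punctures and zeros follows from the $O(\sqrt r)$ endpoint bounds in Proposition~\ref{prop:tree-map} together with the fact that $\alpha$ is smooth on $X$. Because peripheral elements fix endpoints, $c$ vanishes on peripheral loops, so $\alpha$ extends over $D$. By the maximum principle this function is constant. Hence $\phi(u)=\tilde\alpha+\text{const}$ is harmonic, which forces $\phi$ to be affine on each rectangle. Since $\phi$ is $1$-Lipschitz and nonconstant (its derivative has modulus one almost everywhere along an interval directed toward $\epsilon$ when measured suitably), this gives $du=\pm\alpha$ locally. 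The main obstacle is excluding $\phi$ having slope strictly less than $1$, or vanishing, on a rectangle. For this I would use that geodesic rays to a fixed end make the Busemann function have slope exactly $\pm1$ along any segment except at one turning point, so $\phi'=\pm1$ almost everywhere and affinity gives $|\phi'|=1$.

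The final step is to translate into differentials. From $du=\pm\alpha$, $\partial u=\pm\alpha^{1,0}$, while $u=\operatorname{Im}\int\sqrt q$ gives $\partial u=\sqrt q/(2i)$. Thus $q=-4(\alpha^{1,0})^2$ on regular disks, and by the identity theorem everywhere. Setting $\omega=2i\alpha^{1,0}$ gives $q=\omega^2$. Moreover, $\omega$ is holomorphic because $\alpha$ is harmonic and real, and $\operatorname{Im}\omega=\alpha$ because $\alpha=\alpha^{1,0}+\overline{\alpha^{1,0}}$. Holomorphy of $q$ and the evenness of its zero orders are then immediate. For uniqueness, $\ell_T=|c|$ determines $c$ up to sign, since a homomorphism is determined up to sign by its absolute value. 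The period map from $H^0(X,K_X)$ to real cohomology, $\omega\mapsto[\operatorname{Im}\omega]$, is a real isomorphism, so $\omega$ is determined up to sign and $q=\omega^2$ is unique.
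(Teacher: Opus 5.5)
Your proposal is correct. Its skeleton matches the paper's proof: fix an invariant end, set $h=b_T\circ f$, prove $h$ harmonic, and use that a Busemann function has slope $\pm1$ on an isometric interval except at one corner to get $dh=\pm du$. Then $q=-4(\alpha^{1,0})^2$ and uniqueness are read off from periods. The genuine difference is the harmonicity step.

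\emph{Your route.} You first show $c(\gamma_i)=0$ on peripheral loops. You then take the harmonic representative $\alpha$ on the closed curve $X$ with periods $c$, and apply the maximum principle to the bounded subharmonic function $h-\tilde\alpha$ descended to $X$. This is the device the paper itself uses in Lemma~\ref{lem:nonzero-length} and Proposition~\ref{prop:constant-distance}. It avoids currents and gives $h=\tilde\alpha+\mathrm{const}$ directly, so the equality of the periods of $\alpha$ with $c$ is built in. The cost is that the existence half of Hodge theory is used up front. That is why the peripheral vanishing of $c$ must come before $\alpha$ is defined, which you correctly note.

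\emph{The paper's route.} The paper obtains $\alpha$ as an output rather than an input. It defines $\alpha=dh$ as a global current from the branches of $h$. It observes that $d(*\alpha)$ is locally the Laplacian of a subharmonic branch, hence a nonnegative measure, and is also exact, hence of total mass zero. So the measure vanishes, and regularity of distributional harmonic functions gives smoothness. Hodge theory is then needed only for the uniqueness statement.

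Three small repairs are needed.

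First, the sentence ``Instead, I would argue directly that $h$ is harmonic\ldots\ $|dh|^2$ and $h$ are subharmonic'' is unjustified and contradicts what you then actually do. Drop it.

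Second, your reflection argument in the line case is garbled as written. It should read as follows. If $\rho\in\Gamma$ reflects the invariant line, then $|c(\rho)|=\ell_T(\rho)=0$. For every $\tau$ acting by a translation, $\rho\tau$ is again a reflection, so $|c(\tau)|=|c(\rho\tau)|=\ell_T(\rho\tau)=0$. This forces $c\equiv0$, a contradiction. The paper bypasses your case split entirely by citing \cite[Corollary~2.3]{CM87}, and verifies $|c|=\ell_T$ from the common tail of $r$ and $\eta r$ together with Lemma~\ref{lem:displacement}.

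Third, the claim that a homomorphism to $\R$ is determined up to sign by its absolute value deserves the one-line proof the paper gives. Choose $\eta_0$ with $c(\eta_0)\neq0$ and compare squares at $\eta_0$, $\eta$ and $\eta_0\eta$.
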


\begin{proof}
By \cite[Corollary~2.3]{CM87}, an action with a nonzero abelian length function fixes an end. Choose the Busemann function \eqref{eq:busemann} for that end. Invariance of the end gives a homomorphism $c:\Gamma\to\R$ satisfying
\begin{equation}\label{eq:busemann-period}
 b_T(\eta x)=b_T(x)+c(\eta).
\end{equation}
We verify the normalization directly. The rays $r$ and $\eta r$ have a common tail, on which $\eta r(t)=r(t+s_\eta)$ for a constant $s_\eta$. Thus $c(\eta)=-s_\eta$. At a point sufficiently far along that ray, the displacements under $\eta$ and $\eta^2$ are $|s_\eta|$ and $2|s_\eta|$. The tree identity in Lemma~\ref{lem:displacement} gives $\ell_T(\eta)=|s_\eta|=|c(\eta)|$.

Set $h=b_T\circ f$. On a regular disk this is a convex function of the real harmonic height, so it is subharmonic. The local branches differ by the constants in \eqref{eq:busemann-period}. At an ordinary zero of $q$, continuity of $f$ makes the local branch bounded and continuously extendible. At a marked point, let $P_i$ be the endpoint fixed by the peripheral element $\gamma_i$. Then
\[
 c(\gamma_i)=b_T(\gamma_i P_i)-b_T(P_i)=0.
\]
Consequently $h$ has a single-valued branch on the punctured disk. The endpoint bound makes this branch bounded, and it extends subharmonically and continuously over the puncture. We have thus extended the local branches across the omitted points before taking their differentials.

On each connected component of an overlap, any two branches differ by a constant. Their distributional differentials therefore define a global real one-form $\alpha=dh$ in the sense of currents. Choose a smooth conformal metric on $X$ and let $*$ be its Hodge star on real one-forms, with $*dx=dy$ and $*dy=-dx$ in an oriented isothermal coordinate. The current
\begin{equation}\label{eq:positive-current}
 \mu=d(*\alpha)
\end{equation}
is locally the distributional Laplacian of a subharmonic branch times area, hence is a nonnegative measure on $X$. Moreover, this measure is the exterior derivative of a global current. Pairing with the constant function $1$ gives
\[
 \mu(X)=\int_X d(*\alpha)=0.
\]
Thus $\mu=0$. By regularity for distributional harmonic functions, every branch of $h$ is smooth and harmonic. It follows that $\alpha$ is a smooth closed and co-closed real one-form on the compact curve. We use here the standard regularity and Hodge-theoretic facts for harmonic differentials \cite{For81,Ran95}.

On a regular disk write $f=\iota(u)$, where $u=\operatorname{Im}\int\sqrt q$. The restriction of $b_T$ to $\iota(u)$ is affine with slope $1$ or $-1$, except possibly for one convex corner. Since $du\ne0$, a corner inside the height range would make $b_T\circ\iota\circ u$ nonsmooth there, or equivalently would give it a positive distributional Laplacian supported on a level curve. Since the potential is harmonic, no such corner can occur. Hence $dh=du$ or $dh=-du$ locally. As
\[
 (u_z)^2\,dz^2=-\tfrac14q,
\]
we obtain \eqref{eq:abelian-square} on every regular disk. The $(1,0)$ part of a real harmonic form is holomorphic. By meromorphic continuation, \eqref{eq:abelian-square} holds on all of $X$; in particular, any possible simple pole of $q$ disappears.

The periods of $\alpha$ are exactly $c$. A real harmonic one-form on a compact curve is uniquely determined by its periods: the difference of two such forms is $dH$ for a global function $H$, and co-closedness makes $H$ harmonic, hence constant. Finally, if $c'$ is another homomorphism with $|c'|=|c|$, choose $\eta_0$ with $c(\eta_0)\ne0$ and put $c'(\eta_0)=\varepsilon c(\eta_0)$, where $\varepsilon\in\{1,-1\}$. Equality of the squared values on $\eta_0$, $\eta$, and $\eta_0\eta$ gives
\[
 c(\eta_0)c(\eta)=c'(\eta_0)c'(\eta).
\]
Thus $c'=\varepsilon c$ on all of $\Gamma$. The corresponding harmonic form changes by the same global sign, which disappears upon squaring in \eqref{eq:abelian-square}. This proves uniqueness of $q$.
\end{proof}

Figure~\ref{fig:busemann-interval} illustrates the convex corner excluded by harmonicity in this proof.

\begin{figure}[htbp]
\centering
\begin{tikzpicture}[x=1cm,y=1cm,font=\small,
  line cap=round,line join=round,>=Stealth]
  \begin{scope}
    \draw[line width=1pt] (.2,.85)--(4.7,.85);
    \draw[line width=.45pt] (.2,.76)--(.2,.94);
    \draw[line width=.45pt] (4.7,.76)--(4.7,.94);
    \draw[->,line width=.5pt] (2.45,.85)--(2.45,3.15)
      node[above=3pt] {toward the fixed end};
    \node[right=5pt] at (2.45,2.1) {$b_T$ decreases};
    \draw[fill=white,line width=.6pt] (2.45,.85) circle (2pt);
    \node[below=5pt] at (2.45,.85) {possible branch point};
    \draw[->,line width=.4pt] (.65,-.05)--(4.25,-.05)
      node[right=3pt] {$u$};
    \node at (2.45,-.55) {(a) Local image interval in $T$};
  \end{scope}
  \begin{scope}[xshift=7cm]
    \draw[->,line width=.4pt] (.1,-.05)--(4.85,-.05)
      node[right=3pt] {$u$};
    \draw[->,line width=.4pt] (.1,-.05)--(.1,3.15)
      node[above=3pt] {$b_T\circ\iota$};
    \draw[line width=.8pt] (.55,2.75)--(2.45,.85)--(4.35,2.75);
    \node[above right=6pt] at (1.35,1.95) {$-1$};
    \node[above left=6pt] at (3.55,1.95) {$+1$};
    \draw[fill=white,line width=.6pt] (2.45,.85) circle (2pt);
    \node[below=5pt] at (2.45,.85) {excluded corner};
    \node at (2.45,-.55) {(b) Convex restriction};
  \end{scope}
\end{tikzpicture}
\caption{A possible convex corner of $b_T\circ\iota$ when the ray toward the fixed end leaves the local image interval. Harmonicity of $h=b_T\circ f$ excludes an interior corner, yielding $dh=\pm du$.}
\label{fig:busemann-interval}
\end{figure}

For the conjugate complex structure, the same underlying conformal metric has the opposite orientation. The Hodge star on real one-forms changes sign, so the equations $d\alpha=0$ and $d(*\alpha)=0$ are unchanged. Equal periods therefore give the same real harmonic form on the underlying surface, while its $(1,0)$ part is conjugated. Consequently, abelian limiting actions with equal marked lengths on $X$ and $\overline X$ have differentials satisfying $p=\overline q$.

 
\subsection{The comparison theorem}\label{subsec:harmonic-comparison}

We return to the accessory spaces $\A_X$ and $\A_{\overline X}$ for the conjugate marked curves. Their origins, bases, and theta characteristics are conjugate, and their topological markings agree. We now compare the limiting differentials in both the abelian and nonabelian cases.

\begin{proposition}\label{prop:conjugate-limit}
Suppose $Q_j\in\A_X$ and $R_j\in\A_{\overline X}$ have equal finite trace coordinates and
\begin{equation}\label{eq:conjugate-normalization}
 \frac{Q_j-Q_0}{T_j}\longrightarrow q,\quad
 \frac{R_j-\overline{Q_0}}{T_j}\longrightarrow p,
\end{equation}
where $T_j>0$ tends to infinity and at least one limit is nonzero. Then both limits are nonzero and
\begin{equation}\label{eq:conjugate-differential}
 p=\overline q.
\end{equation}
The same conclusion holds if equality of finite traces is replaced by equality of all marked monodromy translation lengths. For two sequences on the same complex curve the resulting identity is $p=q$.
\end{proposition}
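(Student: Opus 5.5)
Equal finite traces give equal traces of every word by \eqref{eq:trace-identities}, hence equal translation lengths $L(\rho_{Q_j}(\eta))=L(\rho_{R_j}(\eta))$ for all $\eta\in\Gamma$. So it suffices to treat the hypothesis of equal marked lengths. First I show both limits are nonzero. Suppose $q\ne0$. Along a subsequence, $\norm{Q_j-Q_0}/T_j\to\norm q>0$, so \eqref{eq:general-sequence} holds with scale $\lambda_j=\sqrt{T_j}$. Proposition~\ref{prop:tree-map}, \eqref{eq:length-limit} and Lemma~\ref{lem:nonzero-length} then give a word $\eta$ with $\lim_{\mathfrak u}L(\rho_{Q_j}(\eta))/(2\sqrt{T_j})>0$. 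If $p=0$, then $\norm{R_j-\overline{Q_0}}=o(T_j)$. Lemma~\ref{lem:growth} on $\overline X$ bounds $L(\rho_{R_j}(\eta))$ by $C_\eta\sqrt{1+o(T_j)}=o(\sqrt{T_j})$, which is a contradiction. Hence $p\ne0$. Using a common ultrafilter and the common scale $\lambda_j=\sqrt{T_j}$, I obtain two tree actions $T_X$ and $T_{\overline X}$ with equivariant maps $f$ and $g$ built from $q$ and $p$. By \eqref{eq:length-limit}, they have the same marked length function $\ell$, and $\ell$ is nonzero.

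\emph{Abelian case.} Proposition~\ref{prop:abelian-reconstruction} applies to both actions. Write $q=\omega^2$ with $\operatorname{Im}\omega=\alpha$, and $p=\omega'^2$ with $\operatorname{Im}\omega'=\alpha'$, where $\alpha$ and $\alpha'$ are real harmonic forms whose periods satisfy $|c|=|c'|=\ell$. The sign argument at the end of that proposition gives $c'=\pm c$. The remark following it says that on the underlying surface the harmonic forms for $X$ and $\overline X$ coincide up to that sign, while the $(1,0)$ part is conjugated. Hence $p=\overline q$. On the same curve the identical argument gives $p=q$.

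\emph{Nonabelian case.} Lemma~\ref{lem:minimal-tree} gives an equivariant isometry $\Phi$ between the completed minimal subtrees, and both maps land in these completions. So $f$ and $\Phi^{-1}\circ g$ are equivariant for one action on one complete tree. Both are maps from the underlying surface, which is the same for $X$ and $\overline X$. Locally $f=\iota\circ u$ with $u=\operatorname{Im}\int\sqrt q$, and $\Phi^{-1}\circ g=\jmath\circ v$ with $v=\operatorname{Im}\int\sqrt p$ computed on $\overline X$. Since harmonicity is independent of orientation, $v$ is also real harmonic on $X$. The endpoint bounds of Proposition~\ref{prop:tree-map} make $d_T(f,\Phi^{-1}g)^2$ bounded near punctures and zeros, since both maps are bounded there. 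Proposition~\ref{prop:constant-distance} then gives $(\partial u)^2=(\partial v)^2$ on common regular disks, where $\partial$ is taken for the complex structure of $X$. Now $(\partial u)^2=-\tfrac14 q$. For $v$, its $X$-holomorphic derivative is the conjugate of its $\overline X$-derivative, so $(\partial v)^2=-\tfrac14\overline p$ viewed as a quadratic differential on $X$. Hence $q=\overline p$, that is, $p=\overline q$. On the same curve no conjugation occurs and $p=q$.

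The main obstacle is the orientation bookkeeping. I must check that $\operatorname{Im}\int\sqrt p$ on $\overline X$ really corresponds to $\partial$-data $-\tfrac14\overline p$ on $X$ with no stray sign. I must also check that the subharmonicity and removability hypotheses of Proposition~\ref{prop:constant-distance} hold for a map built on the conjugate structure. Both hold because the Laplacian and the flat metric $|p|^{1/2}$ do not depend on orientation, but the sign conventions need care.
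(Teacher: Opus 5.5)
Your proposal is correct and follows the paper's proof essentially step for step. Both show first that the limits are nonzero, using Lemma~\ref{lem:growth} together with \eqref{eq:length-limit}, and then obtain equal marked length functions at the common scale $1/(2\sqrt{T_j})$; the abelian case goes through Proposition~\ref{prop:abelian-reconstruction} and the conjugation of harmonic representatives, and the nonabelian case through Lemma~\ref{lem:minimal-tree}, Proposition~\ref{prop:constant-distance} and the local computation $(v_z)^2\,dz^2=-\tfrac14\overline p$. The only point to state explicitly is the case $q=0\ne p$, which follows by interchanging the two sequences, exactly as the paper does.
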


\begin{proof}
Suppose $q\ne0$ and $p=0$. The accessory norm of the second sequence is $o(T_j)$. Lemma~\ref{lem:growth} then gives $L(\rho_{R_j}(\eta))=o(\sqrt{T_j})$ for every fixed word $\eta$. Equality of monodromy lengths and \eqref{eq:length-limit} would make the action associated with $q$ have zero length function, contradicting Lemma~\ref{lem:nonzero-length}. Interchanging the two sequences gives the same contradiction, so both limits are nonzero.

We use the common scale $1/(2\sqrt{T_j})$ and the same ultrafilter in both constructions. Equal finite traces give equal traces of all words, and hence equal hyperbolic translation lengths. By \eqref{eq:length-limit}, the two limiting actions have exactly equal nonzero marked length functions. If this function is abelian, Proposition~\ref{prop:abelian-reconstruction} and the conjugation of harmonic representatives just explained give \eqref{eq:conjugate-differential}.

If the common length function is nonabelian, Lemma~\ref{lem:minimal-tree} identifies the completed minimal trees equivariantly and places both maps in that common target. They have isometric interval descriptions on regular disks. The endpoint bounds ensure that the invariant squared distance is bounded at all marked points and zeros of either limiting differential. Real harmonic functions on $X$ and $\overline X$ are harmonic for the same underlying conformal metric, so Proposition~\ref{prop:constant-distance} applies.

To make the local comparison explicit, in a coordinate $z$ on $X$ write $q=q(z)\,dz^2$ and $p=P(w)\,dw^2$ on $\overline X$, where $w=\overline z$. The two local heights satisfy
\[
 u=\operatorname{Im}\int\sqrt q,\quad
 (u_z)^2\,dz^2=-\tfrac14q.
\]
If $v=\operatorname{Im}F(\overline z)$ with $F'(w)=\sqrt{P(w)}$, then
\[
 (v_z)^2\,dz^2=-\tfrac14\overline p.
\]
Equality of the squared derivatives gives \eqref{eq:conjugate-differential} on a common regular disk. The identity principle then extends it over $X$. When both structures are the same, both derivative formulas use that complex structure and give $p=q$.
\end{proof}

The peripheral argument excludes nonzero abelian lengths on the sphere, while the reconstruction from harmonic periods handles abelian lengths when they occur on a compact Riemann surface.


\section{Proof of the main theorems}\label{sec:finiteness}

Throughout this section, $X$ is a compact connected Riemann surface, $D$ is a finite reduced marked divisor, and $\A_X$ is the nonempty accessory space for prescribed coefficients $\boldsymbol\delta$. The group $\Gamma=\pi_1(X\setminus D,x_0)$ is fixed by the marking. We first prove the finiteness theorems and then obtain uniform counting bounds in smooth marked families. As in Section~\ref{subsec:traces}, $\K$ and $K^{\mathrm{tr}}$ denote the compact images of $\Hom(\Gamma,\SU(2))$ in the character variety and the finite-trace space, respectively.

 
\subsection{The analytic relation and unitary characters}\label{subsec:analytic}

The comparison of limiting quadratic differentials restricts the possible directions at infinity. We first show how this restriction implies discreteness.

\begin{lemma}\label{lem:psh}
Let $A\subset\C^d$ be a closed complex analytic set. Suppose every limit of normalized points $x_j/\norm{x_j}$, where $x_j\in A$ and $\norm{x_j}\to\infty$, lies in a fixed totally real linear subspace of real dimension $d$. Then $A$ is discrete.
\end{lemma}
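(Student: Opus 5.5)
The plan is to reduce to the case where the totally real subspace is $\R^d\subset\C^d$. Then we build an explicit strictly plurisubharmonic exhaustion-type function that tends to $-\infty$ along $A$, and argue by the maximum principle, in the same way as in the proof of Corollary~\ref{cor:finite-holonomy}.

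\emph{Step 1 (normalization).} Let $W$ be the totally real subspace of real dimension $d$. Then $W\cap iW=0$, so a real basis of $W$ is a complex basis of $\C^d$. The complex linear map sending it to the standard basis carries $W$ onto $\R^d$. It preserves closed analytic sets and discreteness, and it distorts norms by bounded factors. The limits of normalized points of the image therefore lie in $\R^d$. We may thus assume $W=\R^d$, and write $z=x+iy$ with $x,y\in\R^d$.

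\emph{Step 2 (sublinear imaginary part).} We claim that $\norm{y}/\norm{z}\to0$ as $z\to\infty$ in $A$. Otherwise there are $z_j\in A$ with $\norm{z_j}\to\infty$ and $\norm{y_j}\ge\varepsilon\norm{z_j}$. By compactness of the unit sphere, a subsequence of $z_j/\norm{z_j}$ converges to a limit whose imaginary part has norm at least $\varepsilon$. This limit is not in $\R^d$, contrary to the hypothesis. I expect this to be the only point needing care, namely turning a hypothesis about limit directions into a uniform estimate, and compactness does exactly that.

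\emph{Step 3 (the function).} Consider
\[
\psi(z)=\norm{y}^2-\tfrac12\norm{x}^2=\bigl(\norm{y}^2-\norm{x}^2\bigr)+\tfrac12\norm{x}^2 .
\]
The bracket equals $-\operatorname{Re}\sum_k z_k^2$ and is pluriharmonic. The term $\tfrac12\norm{x}^2$ has Levi form $\tfrac14$ times the identity. Hence $\psi$ is strictly plurisubharmonic on $\C^d$. By Step~2, on $A$ we have $\norm{y}^2=o(\norm{z}^2)$ and $\norm{x}^2=\norm{z}^2(1-o(1))$. Therefore $\psi\to-\infty$ along $A$ at infinity.

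\emph{Step 4 (conclusion).} Suppose $A$ has an irreducible component $A'$ of positive dimension. Since $A'$ is closed and nonempty and $\psi|_{A'}\to-\infty$ at infinity, $\psi|_{A'}$ attains a maximum at some point $p\in A'$. Through $p$ there is a nonconstant local analytic curve $\phi:\Delta\to A'$ with $\phi(0)=p$, for instance by Puiseux or local parametrization of a one-dimensional analytic subset of $A'$ through $p$. Then $\psi\circ\phi$ is subharmonic and has an interior maximum, so it is constant. This contradicts strict plurisubharmonicity, since the Laplacian of $\psi\circ\phi$ is positive wherever $\phi'\ne0$. Hence every component of $A$ is a point. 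Since the components of a closed analytic set are locally finite, $A$ is discrete.
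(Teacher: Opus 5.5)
Your proof is correct and follows essentially the same route as the paper: the same normalization to $\R^d$, the same compactness argument giving $\norm{y}/\norm{z}\to0$ on $A$, the paper's function $B(\xi)=|\operatorname{Im}\xi|^2-\varepsilon|\operatorname{Re}\xi|^2$ with your choice $\varepsilon=1/2$, and the same maximum-principle argument on a nonconstant holomorphic disk through a maximum point of a positive-dimensional component. Splitting $\psi$ into the pluriharmonic part $-\operatorname{Re}\sum_k z_k^2$ plus $\tfrac12\norm{x}^2$ is only a different way of checking the Levi form $(1-\varepsilon)/2=1/4$, which the paper computes directly.
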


\begin{proof}
A complex-linear change of coordinates sends the given real subspace to $\R^d$. In the resulting coordinates the hypothesis implies
\begin{equation}\label{eq:real-directions}
 \frac{|\operatorname{Im}\xi|}{|\xi|}\longrightarrow0
 \quad\text{as }|\xi|\to\infty,\quad \xi\in A.
\end{equation}
Otherwise, a sequence violating this limit would have a convergent normalized subsequence with a nonreal limit.

Fix $0<\varepsilon<1$ and put
\begin{equation}\label{eq:psh-function}
 B(\xi)=|\operatorname{Im}\xi|^2-\varepsilon|\operatorname{Re}\xi|^2.
\end{equation}
Its Levi matrix is $(1-\varepsilon)I_d/2$, so $B$ is strictly plurisubharmonic. Since $B=(1+\varepsilon)|\operatorname{Im}\xi|^2-\varepsilon|\xi|^2$, equation~\eqref{eq:real-directions} gives $B(\xi)\le-\varepsilon|\xi|^2/2$ on $A$ outside a large ball.

Suppose $C$ is a positive-dimensional irreducible component of $A$. The component is closed. For any $x\in C$, its nonempty superlevel set $\{y\in C:B(y)\ge B(x)\}$ is compact, so $B|_C$ has a maximum at some $x_*$. A positive-dimensional analytic germ contains a nonconstant holomorphic curve germ through its basepoint: take a one-dimensional analytic section and normalize a branch \cite{Chi89}. Thus there is a nonconstant holomorphic disk $h$ in $C$ with $h(0)=x_*$. The subharmonic function $B\circ h$ has an interior maximum and is constant. But
\[
 \frac{\partial^2(B\circ h)}{\partial t\,\partial\overline t}
 =\frac{1-\varepsilon}{2}\sum_{\nu=1}^d|h_\nu'(t)|^2,
\]
which would force $h$ to be constant. We conclude that no positive-dimensional components exist. A zero-dimensional analytic set is locally discrete \cite{Chi89}.
\end{proof}

We use conjugate origins and bases to identify $\A_X\times\A_{\overline X}$ with $\C^m\times\C^m$, where $m=\dim H^0(X,K_X^{\otimes2}(D))$. For both Riemann surfaces, we use the identity on the underlying topological surface as the marking.

\begin{definition}[The trace relation]\label{def:trace-relation}
The trace relation between the conjugate accessory spaces is
\begin{equation}\label{eq:trace-relation}
 \widetilde Z=\{(Q,R)\in\A_X\times\A_{\overline X}:F_X(Q)=F_{\overline X}(R)\}.
\end{equation}
As in \eqref{eq:intro-relation}, the equal-character relation $Z$ consists of the pairs satisfying $\Hol_X(Q)=\Hol_{\overline X}(R)$.
\end{definition}

Both relations are closed complex analytic. Indeed, the first is the common zero set of finitely many entire functions, while the second is the inverse image of the diagonal in the affine character variety under the holonomy maps. In particular, $Z\subset\widetilde Z$.

\begin{proposition}\label{prop:discrete-relation}
The sets $\widetilde Z$ and $Z$ are closed and discrete.
\end{proposition}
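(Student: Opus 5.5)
The plan is to apply Lemma~\ref{lem:psh} to $A=\widetilde Z\subset\C^m\times\C^m=\C^{2m}$, using the identification by conjugate origins and bases. Closedness and analyticity of $\widetilde Z$ were noted after Definition~\ref{def:trace-relation}. Since $Z\subset\widetilde Z$ is closed, discreteness of $Z$ follows at once from that of $\widetilde Z$. So everything reduces to controlling normalized directions at infinity in $\widetilde Z$.

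Take $(Q_j,R_j)\in\widetilde Z$ with $\norm{(Q_j-Q_0,R_j-\overline{Q_0})}\to\infty$, and put $T_j$ equal to this norm. After passing to a subsequence, $(Q_j-Q_0)/T_j\to q$ and $(R_j-\overline{Q_0})/T_j\to p$, with $(q,p)$ of unit norm, so at least one limit is nonzero. The common scale $T_j$ is allowed in \eqref{eq:general-sequence}, since $T_j$ need not equal each individual accessory norm. The finite traces agree along the sequence, so Proposition~\ref{prop:conjugate-limit} applies. It shows that both limits are nonzero and that $p=\overline q$.

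In the chosen coordinates the basis of $V_{\overline X}$ is the conjugate of that of $V$. Hence the coordinate vector of $\overline q$ is the complex conjugate of the coordinate vector of $q$. Every limit direction therefore lies in
\[
 W=\{(\xi,\overline\xi):\xi\in\C^m\}\subset\C^{2m}.
\]
This is a real linear subspace of real dimension $2m$. It is totally real: if $(\xi,\overline\xi)$ and $i(\xi',\overline{\xi'})$ both lie in $W$, then $i\xi'=\xi$ and $-i\overline{\xi'}=\overline\xi$, which forces $\xi=0$. So $W\cap iW=0$, and Lemma~\ref{lem:psh} with $d=2m$ gives that $\widetilde Z$ is discrete. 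Closedness was already established, so both sets are closed and discrete.

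The real content is Proposition~\ref{prop:conjugate-limit}, which we are allowed to assume. The step needing most care is the bookkeeping: the normalization must use the joint norm, so that the hypothesis "at least one limit nonzero" holds. One must also check that $p=\overline q$, stated intrinsically on $\overline X$, is the coordinatewise conjugation $\xi\mapsto\overline\xi$; this uses the conjugate-basis convention of Section~\ref{subsec:accessory}. If $m=0$ the sets are single points and there is nothing to prove.
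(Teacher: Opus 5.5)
Your proposal is correct and follows the paper's proof essentially step for step. You normalize an escaping sequence in $\widetilde Z$ by a common scale, invoke Proposition~\ref{prop:conjugate-limit} to get $p=\overline q$, place all directions at infinity in the totally real subspace $\{(\xi,\overline\xi)\}$ of real dimension $2m$, apply Lemma~\ref{lem:psh}, and pass to the closed subset $Z$. The differences are only cosmetic. The paper normalizes by $\max\{1,\norm{Q_j-Q_0},\norm{R_j-\overline{Q_0}}\}$, appeals to equivalence of norms, and writes explicit complex-linear coordinates sending the subspace to $\R^{2m}$. You instead use the joint Euclidean norm directly and check total reality by hand.
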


\begin{proof}
The case $m=0$ is immediate. Otherwise, let $(Q_j,R_j)\in\widetilde Z$ escape every compact subset and set
\[
 T_j=\max\{1,\norm{Q_j-Q_0},\norm{R_j-\overline{Q_0}}\}.
\]
A subsequence of the normalized differences converges to $(q,p)$ with $\max\{\norm q,\norm p\}=1$. Proposition~\ref{prop:conjugate-limit} gives $p=\overline q$. The conclusion holds for every convergent normalized subsequence. Thus all directions at infinity lie in the totally real subspace
\begin{equation}\label{eq:real-subspace}
 L_{\R}=\{(a,b)\in\C^m\times\C^m:b=\overline a\}.
\end{equation}
The complex-linear coordinates
\[
 z=\frac{a+b}{2},\quad w=\frac{a-b}{2i}
\]
send $L_{\R}$ to $\R^{2m}$. By equivalence of norms, the normalization by $T_j$ is equivalent to Euclidean normalization. We can therefore apply Lemma~\ref{lem:psh} to $\widetilde Z$. Its closed analytic subset $Z$ is also discrete.
\end{proof}

\begin{proof}[Proof of Theorem~\ref{thm:holonomy}]
By Proposition~\ref{prop:properness}, the holonomy map is proper, and by Proposition~\ref{prop:discrete-relation}, $Z$ is discrete. To prove the final assertion, we consider the larger preimage under the finite-trace map
\[
 E^{\mathrm{tr}}=F_X^{-1}(K^{\mathrm{tr}}).
\]
This set is compact by properness. Every coordinate of $K^{\mathrm{tr}}$ is real, and coefficient conjugation satisfies \eqref{eq:conjugate-traces}. Hence
\[
 \{(Q,\overline Q):Q\in E^{\mathrm{tr}}\}\subset\widetilde Z.
\]
The graph on the left is compact and the set on the right is closed discrete, so the graph is finite. Projecting to the first factor identifies this graph with $E^{\mathrm{tr}}$. Finally $\Hol_X^{-1}(\K)\subset E^{\mathrm{tr}}$, which proves the theorem.
\end{proof}

\begin{proof}[Proof of Theorem~\ref{thm:unitary}]
If the accessory space is empty, the conclusion is immediate. Otherwise, put $\delta_i=(1-\beta_i^2)/4$ and use the scalar lift defined by a theta characteristic. For a connection with unitarizable projective monodromy, we lift a projective conjugating element to $\SL_2(\C)$. Each conjugated monodromy matrix belongs to the inverse image of $\PSU(2)$ under $\SL_2(\C)\to\PSL_2(\C)$. The inverse image is $\SU(2)$: a determinant-one matrix projectively equal to $U\in\SU(2)$ is $cU$ with $c^2=1$. Its character lies in $\K$, and Theorem~\ref{thm:holonomy} gives finiteness.
\end{proof}

We next formulate discreteness in the character variety itself. Let $\mathcal Y_X=\Hol_X(\A_X)$ and $\mathcal Y_{\overline X}=\Hol_{\overline X}(\A_{\overline X})$, with the reduced analytic structures of Corollary~\ref{cor:finite-holonomy}.

\begin{corollary}\label{cor:image-intersection}
The intersection $\mathcal Y_X\cap\mathcal Y_{\overline X}$ is a closed discrete analytic subset of $\X(\Gamma)$. Every compact subset of $\X(\Gamma)$ contains only finitely many common characters and has only finitely many pairs in $Z$ above it.
\end{corollary}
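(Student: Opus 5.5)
The plan is to derive the corollary from Corollary~\ref{cor:finite-holonomy} and Proposition~\ref{prop:discrete-relation}, passing through the proper projection $Z\to\X(\Gamma)$.

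First I will show that the intersection is a closed analytic subset. By Corollary~\ref{cor:finite-holonomy}, $\mathcal Y_X$ and $\mathcal Y_{\overline X}$ are closed analytic subsets of the affine variety $\X(\Gamma)$. Hence $\mathcal Y_X\cap\mathcal Y_{\overline X}$ is closed analytic as well. To see that it is discrete, I identify it with the image of $Z$ under the map $\Phi:(Q,R)\mapsto\Hol_X(Q)$. Indeed, a character $\chi$ in the intersection has the form $\chi=\Hol_X(Q)=\Hol_{\overline X}(R)$ for some $Q,R$, so $(Q,R)\in Z$. Conversely, every point of $Z$ maps into the intersection.

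Next I will check that $\Phi|_Z$ is proper. Take a compact $C\subset\X(\Gamma)$. Then
\[
 \Phi|_Z^{-1}(C)\subset\Hol_X^{-1}(C)\times\Hol_{\overline X}^{-1}(C),
\]
and the right side is compact because Proposition~\ref{prop:properness} applies on both $X$ and $\overline X$. Since $Z$ is closed, $\Phi|_Z^{-1}(C)$ is compact. By Proposition~\ref{prop:discrete-relation}, $Z$ is closed and discrete, so this compact preimage is finite. This already gives the last assertion: above any compact set there are only finitely many pairs in $Z$. The image of these finitely many pairs is finite, so every compact set contains only finitely many common characters.

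A closed subset of the locally compact space $\X(\Gamma)$ that meets every compact set in a finite set is discrete. Being closed analytic, it is zero-dimensional and locally discrete. This proves the first assertion.

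There is no substantial obstacle. The only point needing care is that a common character must lift to an actual pair in $Z$, and this holds because the images are defined set-theoretically as $\Hol(\A)$. The reduced analytic structure plays no role beyond closedness of the images.
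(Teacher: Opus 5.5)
Your proposal is correct and follows essentially the same route as the paper. You place the pairs of $Z$ over a compact set $C$ inside $\Hol_X^{-1}(C)\times\Hol_{\overline X}^{-1}(C)$ using properness on both curves, and you get finiteness from the closed discreteness of $Z$ in Proposition~\ref{prop:discrete-relation}; you then combine the resulting finite image with the closed analyticity of the two holonomy images to conclude discreteness, differing from the paper only in phrasing the first step as properness of the projection $Z\to\X(\Gamma)$.
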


\begin{proof}
For a compact $K\subset\X(\Gamma)$, the pairs with common character in $K$ form the closed set
\[
 Z_K=Z\cap\bigl(\Hol_X^{-1}(K)\times\Hol_{\overline X}^{-1}(K)\bigr).
\]
By properness, this set is compact; by Proposition~\ref{prop:discrete-relation}, it is finite. The image of this set is exactly $K\cap\mathcal Y_X\cap\mathcal Y_{\overline X}$. Both holonomy images are closed analytic. Their intersection is therefore closed analytic. Since it meets each compact neighborhood in only finitely many points, it is discrete.
\end{proof}

For a common character $\chi$, the corresponding set of pairs is $\Hol_X^{-1}(\chi)\times\Hol_{\overline X}^{-1}(\chi)$, and its cardinality is the product of the two fiber cardinalities. Coefficient conjugation also embeds $F_X^{-1}(\R^N)$ into $\widetilde Z$ as a graph. It follows that the accessory set with real traces is closed and discrete, and imposing bounded trace conditions gives finite subsets.

\begin{remark}[Real traces]\label{rem:real-traces}
The set of connections with real traces need not be finite. For a rectangular torus $\C/(a\Z+ib\Z)$, with $a,b>0$, let $z$ be its degree-two map to the sphere and write $dw/dz=1/\sqrt{P(z)}$ in an affine coordinate, where $P$ has the three finite branch values as simple roots. The developing maps $f_k(w)=\exp(2\pi k w/b)$ give sphere connections
\[
 Q_k(z)=-\frac{(2\pi k/b)^2}{4P(z)}+\frac12\mathcal S(w)(z),\quad k=1,2,\ldots.
\]
All four exponent differences are $1/2$. The torus periods act by a positive dilation and the identity, while the sheet involution acts by $f\mapsto1/f$. Conjugation by $f\mapsto if$ changes inversion to $f\mapsto-1/f$, so the projective monodromy lies in $\PSL_2(\R)$ and all scalar traces are real. The connections are distinct and unbounded. Thus discreteness of the real-trace set alone does not imply finiteness of the compact unitary locus.
\end{remark}

\begin{remark}\label{rem:resonance}
At an integral exponent difference, the logarithmic Frobenius term and local Jordan form can vary within the accessory space. The estimates above are made along fixed paths outside $D$, while the bounds near punctures concern the limiting quadratic differential with at most simple poles. The preimage of the compact character set may contain connections with nonsemisimple monodromy. For example, on an unmarked torus the connection $Q=0$ with the trivial theta characteristic has unipotent monodromy and trivial character. Its projective monodromy is not unitarizable.
\end{remark}

If $D$ is empty and $g(X)\ge1$, the unitarizable locus itself is empty. A holomorphic projective connection has an unbranched developing map, whose pullback of the round metric would descend to a smooth curvature-one metric on $X$. Gauss--Bonnet would give its positive area as $2\pi(2-2g(X))\le0$. The torus example in Remark~\ref{rem:resonance} also illustrates the distinction between a unitary character and an invariant spherical metric.

 
\subsection{Uniform counting in smooth families}\label{subsec:uniform}

Consider the local holomorphic family $\pi:\mathcal C\to B$ of Section~\ref{subsec:relative}, with disjoint marked sections, fixed coefficients $\boldsymbol\delta$, a compatible theta characteristic, and the affine chart \eqref{eq:relative-accessory}. We write $X_b=\pi^{-1}(b)$. The topological marking identifies the fundamental groups with a fixed $\Gamma$, so $\K\subset\X(\Gamma)$ and its finite-trace image $K^{\mathrm{tr}}\subset\R^N$ are independent of $b$.

\begin{corollary}\label{cor:uniform-count}
For every compact set $K\Subset B$, there is a finite integer $N_K$ such that
\begin{equation}\label{eq:uniform-count}
 \#\Hol_{X_b}^{-1}(\K)\le N_K\quad(b\in K).
\end{equation}
The same bound holds for the number of connections with unitarizable projective monodromy on each fiber.
\end{corollary}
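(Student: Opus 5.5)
The plan is to reduce the uniform count to a uniform finiteness statement for a relatively compact subanalytic set whose fibers over the base are finite. Since $K$ is compact, it is covered by finitely many relatively compact coordinate balls contained in the marked chart $B$, so it suffices to work over one such ball $B'\Subset B$ with $K\subset\overline{B'}$. By Proposition~\ref{prop:relative-properness}, the relative trace map $(b,Q)\mapsto(b,F_{X_b}(Q))$ is proper. Since $K^{\mathrm{tr}}$ is compact, there is therefore $R>0$ with
\[
 E_K=\{(b,Q): b\in K,\ F_{X_b}(Q)\in K^{\mathrm{tr}}\}\subset K\times\{\norm{Q-Q_0(b)}\le R\}.
\]
In the holomorphic frame $e_1(b),\ldots,e_m(b)$ we identify the accessory spaces with $B'\times\C^m$. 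In these coordinates the map $(b,Q)\mapsto F_{X_b}(Q)$ is jointly holomorphic, again by Proposition~\ref{prop:relative-properness}.

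First I check that every fiber of $E_K$ is finite. By coefficient conjugation \eqref{eq:conjugate-traces} on $X_b$, the set $F_{X_b}^{-1}(\R^N)$ embeds as a graph into the trace relation $\widetilde Z_b$. That relation is closed and discrete by Proposition~\ref{prop:discrete-relation}, so the fiber of $E_K$ over $b$, being compact and contained in this graph, is finite. This is just the argument in the proof of Theorem~\ref{thm:holonomy}, applied fiberwise.

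Next I check that $E_K$ is subanalytic in the real analytic manifold $B'\times\C^m$. The set $K^{\mathrm{tr}}=\tau(\K)$ is the image of the compact real algebraic set $\Hom(\Gamma,\SU(2))\subset\SU(2)^r$ under polynomial trace maps, so it is semialgebraic by Tarski--Seidenberg. Its preimage under the real analytic map $(b,Q)\mapsto F_{X_b}(Q)$ is subanalytic, because the preimage of a semialgebraic set under an analytic map is a projection of a semianalytic set. Intersecting with the compact set $K\times\{\norm{Q-Q_0(b)}\le R\}$, where $K$ may be enlarged to a compact semianalytic set such as a closed ball, shows that $E_K$ is a relatively compact subanalytic set. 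The fiber finiteness and the subanalyticity are the only properties of $E_K$ used below.

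The main step is to pass from finiteness of each fiber to a uniform bound. I would invoke the uniform finiteness theorem for subanalytic families (Gabrielov; equivalently, subanalytic stratification or cell decomposition in the o-minimal structure $\R_{\mathrm{an}}$). It states that for a relatively compact subanalytic set whose projection to $B'$ has finite fibers, the fiber cardinality is bounded. This gives $N_K$ with $\#F_{X_b}^{-1}(K^{\mathrm{tr}})\le N_K$ for all $b\in K$. Since $\Hol_{X_b}^{-1}(\K)\subset F_{X_b}^{-1}(K^{\mathrm{tr}})$, this proves \eqref{eq:uniform-count}.

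If one prefers to avoid o-minimality, a direct contradiction argument is available. Take $b_j\to b_0$ with more than $j$ points in the fibers; these points accumulate at the finitely many points of the fiber over $b_0$. One then needs a local bound on the number of nearby fiber points near each limit point $(b_0,Q_*)$. I would try to get it from a local Łojasiewicz structure theorem applied to the real analytic set $\{\operatorname{Im}F=0\}$ near $(b_0,Q_*)$, together with the finite, hence zero-dimensional, central fiber. I expect this local multiplicity bound to be the delicate point, which is why I would rely on subanalytic uniform finiteness.

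Finally, the statement about connections with unitarizable projective monodromy follows as in the proof of Theorem~\ref{thm:unitary}. Using the compatible theta characteristic on the family, a projective conjugating element lifts to $\SL_2(\C)$. The conjugated scalar monodromy then lies in the inverse image of $\PSU(2)$, which is $\SU(2)$, so its character lies in $\K$. Hence the number of such connections on $X_b$ is also at most $N_K$.
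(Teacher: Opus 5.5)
Your proposal is correct and follows essentially the same route as the paper. Relative properness confines the unitary-trace locus over a compact piece of the base to a compact coefficient box. Semialgebraicity of $K^{\mathrm{tr}}$ and joint real analyticity of the traces then make this set $\R_{\mathrm{an}}$-definable, and the conjugation argument from the proof of Theorem~\ref{thm:holonomy} gives finite fibers, so uniform finiteness in $\R_{\mathrm{an}}$ yields $N_K$. The phrase ``with $K\subset\overline{B'}$'' should read $K\cap\overline{B'}$, since you cover $K$ by finitely many balls, just as the paper covers it by finitely many boxes.
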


\begin{proof}
We apply the standard uniform bound for the number of connected components of fibers of a definable set in an o-minimal structure \cite[Section~4.4]{vdDM96}. In our setting, the structure is $\R_{\mathrm{an}}$, generated by real-analytic functions restricted to compact boxes on whose neighborhoods they are analytic \cite[Section~2]{vdDM96}.

The unitary representation space is semialgebraic in the real and imaginary parts of a finite set of generator matrices: the unitary equations, determinant equations, and finitely many group relations are polynomial equations. The finite trace functions are also polynomial in these entries. By the projection theorem for semialgebraic sets, their compact image $K^{\mathrm{tr}}$ is semialgebraic.

For a closed real coordinate box $B_0$ contained in a slightly larger parameter chart, relative properness of the finite-trace map makes
\[
 \{(b,Q):b\in B_0,\ F_{X_b}(Q)\in K^{\mathrm{tr}}\}
\]
compact. Consequently, in the affine trivialization, the coefficient vectors belong to a fixed compact box. The jointly holomorphic finite-trace functions are real analytic on a neighborhood of the resulting product box. It follows that the condition of belonging to $K^{\mathrm{tr}}$ defines an $\R_{\mathrm{an}}$-definable set there. Its fiber at each $b$ is finite by the proof of Theorem~\ref{thm:holonomy}. Because these fibers are finite, the bound on the number of components also bounds their cardinalities uniformly. Finitely many such boxes cover $K$. Taking the maximum of these bounds gives a uniform bound for every finite-trace preimage and hence for every $\Hol_{X_b}^{-1}(\K)$. The last assertion follows from the lift used in the proof of Theorem~\ref{thm:unitary}.
\end{proof}

The geometric estimate in Proposition~\ref{prop:relative-properness} allows us to restrict the coefficients to a compact box. We then combine real analyticity with finiteness on the fixed fibers to obtain the counting bound. In the same family, relative properness and the argument for compact analytic fibers show that the relative holonomy map is finite onto its reduced analytic image; its fiber cardinalities over individual base-character pairs are locally bounded by the coherent direct-image algebra.


\section{Geometric consequences}\label{sec:geometric}

We now apply the finiteness theorem to spherical metrics with conical singularities, ramification divisors, and meromorphic functions.

 
\subsection{Spherical conical metrics}\label{subsec:spherical}

Let $X$ have genus $g$, and fix $D=\{a_1,\ldots,a_n\}$ and positive real parameters $\boldsymbol\beta=(\beta_1,\ldots,\beta_n)$. For the correspondence with metrics, we use the coefficients $\delta_i=(1-\beta_i^2)/4$. We first describe the correspondence and the metrics within a projective class, and then give conditions excluding coaxial monodromy.

\begin{definition}[Spherical conical metrics]\label{def:spherical}
Let $\Sph(X,D,\boldsymbol\beta)$ be the set of conformal metrics of curvature one on $X\setminus D$ which, in a coordinate centered at $a_i$, have the form
\begin{equation}\label{eq:cone-metric}
 h=e^{2v_i(z)}|z|^{2\beta_i-2}|dz|^2,
\end{equation}
where $v_i$ is real and continuous at $0$ and smooth off $0$. A developing map is an orientation-preserving local isometry from $\widetilde{X\setminus D}$ to the round sphere. Two metrics are \emph{projectively equivalent} if their developing maps $f_1,f_2$ satisfy $f_2=M\circ f_1$ for a M\"obius transformation $M$ of the target sphere; denote this relation by $\sim$. A metric has \emph{coaxial monodromy} if its projective monodromy is conjugate into the rotations $w\mapsto e^{it}w$ fixing $0$ and $\infty$ individually.
\end{definition}

We compare metrics on the fixed domain $X$ using the topology of $C^2$ convergence of conformal densities on compact subsets of $X\setminus D$. Since two developing maps of the same metric differ by a round rotation, the relation $\sim$ does not depend on that choice. A mark with $\beta_i=1$ is a removable ordinary point. We distinguish noncoaxial, nontrivial coaxial, and trivial monodromy, as in the standard description of the metrics within a projective class \cite[Section~2]{Ere20,Ere24}.

\begin{proposition}\label{prop:metric-classes}
The quotient $\Sph(X,D,\boldsymbol\beta)/{\sim}$ is finite. Each class is homeomorphic to a point, to $\R_{>0}$, or to $\HH^3$, according as its projective monodromy is noncoaxial, nontrivial coaxial, or trivial. In particular, all noncoaxial metrics with the fixed data form a finite set. Every continuous family parametrized by a connected space has constant Schwarzian and lies in one of these classes.
\end{proposition}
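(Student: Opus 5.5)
The plan is to pass from metrics to projective connections through the Schwarzian of a developing map, and then invoke Theorem~\ref{thm:unitary}. Given $h\in\Sph(X,D,\boldsymbol\beta)$ with developing map $f$, set $Q_z=\tfrac12\mathcal S(f)$. Since $f$ is a local isometry, its monodromy lies in $\PSU(2)$, and post-composition by rotations leaves $Q$ unchanged. So $Q$ is a single-valued meromorphic projective connection on $X\setminus D$, and it depends only on the class of $h$ under $\sim$, because Möbius post-composition preserves the Schwarzian.

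Near $a_i$, the cone form \eqref{eq:cone-metric} and the standard local normal form of a developing map give $f=M\circ(z^{\beta_i}\phi)$ with $\phi$ holomorphic and nonvanishing (with a logarithmic modification when $\beta_i$ is an integer and the local monodromy is nontrivial). Hence $Q_z=(1-\beta_i^2)/(4z^2)+O(z^{-1})$. Thus $Q\in\A_X$ with $\delta_i=(1-\beta_i^2)/4$, and its projective monodromy is unitarizable. The assignment of $Q$ to a class is injective, since $Q$ determines the developing map up to Möbius post-composition. By Theorem~\ref{thm:unitary} its image is finite, which proves finiteness of the quotient.

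Next I would describe a single class. Fix $f_0$ with projective monodromy $G\subset\PSU(2)$. The metrics in the class are the pullbacks of the round metric by $M\circ f_0$, with $M$ defined modulo left multiplication by $\PSU(2)$, and we need $MGM^{-1}\subset\PSU(2)$. Equivalently, these metrics correspond to positive definite Hermitian forms of determinant one that are $G$-invariant, that is, to points of $\HH^3$ fixed by $G$. This is the Hermitian form description of \cite{Ere20,Ere24}. The fixed set $\Fix(G)\subset\HH^3$ is:
\begin{itemize}
\item all of $\HH^3$ if $G$ is trivial;
\item a geodesic if $G$ is a nontrivial group of rotations about a common axis (the coaxial case);
\item a single point otherwise, since a compact group fixing two points fixes the geodesic through them and is therefore coaxial.
\end{itemize}
The geodesic is homeomorphic to $\R_{>0}$, parametrized by the dilations $w\mapsto tw$.

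One must check that distinct fixed points give distinct metrics, and that the bijection is a homeomorphism in the $C^2$ topology on compact subsets. Distinctness holds because equal pullbacks force $M_1M_2^{-1}$ to be an isometry of the image of $f_0$, which is open, and hence to lie in $\PSU(2)$. For the topology, the density $4|(M\circ f_0)'|^2/(1+|M\circ f_0|^2)^2$ depends smoothly on $M$, and a compactness argument on a fixed disk recovers $M$ continuously from the density.

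Finally, take a continuous family of metrics over a connected parameter space. The Schwarzians $\tfrac12\mathcal S(f)$ depend continuously on the metric, since they are computed from the density (on a curvature-one metric $\tfrac12\mathcal S(f)$ is expressed through $\partial^2\log$ of the density). They therefore form a continuous map into a finite subset of $\A_X$. Such a map is constant, so the whole family lies in one class.

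I expect the main obstacle to be the local analysis at integral cone angles, where the developing map may have logarithmic terms and one must verify the $z^{-2}$ coefficient from \eqref{eq:cone-metric}. I would handle it by writing $h=4|f'|^2/(1+|f|^2)^2$ and using the fact that $v_i$ continuous forces $f$, after a rotation, to have a branch of $z^{\beta_i}$ times a unit as its leading behaviour.
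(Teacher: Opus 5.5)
Your proposal is correct and follows essentially the paper's route. The Schwarzian injects projective classes into the unitarizable connections counted by Theorem~\ref{thm:unitary}, and each class is parametrized by the invariant positive Hermitian forms of determinant one: your fixed-point set of the monodromy group in $\HH^3$ is the same object as the paper's commutant of the unitary lift, and your point/geodesic/$\HH^3$ trichotomy matches its three cases. Continuity of $h\mapsto Q_h$ into a finite set then handles connected families, exactly as in the paper.

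Two small points should be tightened. First, no logarithmic term can occur at an integral angle, because unitary local monodromy cannot be parabolic, so your parenthetical case is vacuous. Second, your ``compactness argument'' for continuity of the inverse must also exclude divergent forms $H_j$; this is easy, since their densities tend to zero off a discrete set. The paper avoids this step by recovering $H$ linearly from the density at four points whose developing values give independent evaluations of Hermitian forms.
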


\begin{proof}
The correspondence with developing maps associates to each metric a projective connection $Q_h=\tfrac12\mathcal S(f)$ with unitarizable projective monodromy \cite[Section~2]{CWW+15}\cite[Sections~2 and~4]{Ere24}. In a local conformal expression $h=e^{2u}|dz|^2$, its coefficient is $u_{zz}-u_z^2$, with coefficient $(1-\beta_i^2)/4$ of the $z^{-2}$ term at $a_i$. Away from $D$, local univalence makes the Schwarzian holomorphic, including at poles of an affine target coordinate. The standard correspondence also gives the required local regularity at each cone and the corresponding regular-singular expansion.

Conversely, take a connection with the prescribed coefficients and unitarizable projective monodromy. Choose a solution ratio $f$ whose monodromy is in $\PSU(2)$. The nonzero Wronskian ensures local univalence outside $D$. At a nonintegral difference $\beta_i>0$, Frobenius theory gives $f=z^{\beta_i}a(z)$, where $a$ is holomorphic and $a(0)\ne0$, after a round rotation sending the local fixed axis to $0,\infty$. If necessary, we interchange the two axis points to obtain the positive exponent. At a positive integral difference $\beta_i=k$, a nonzero logarithmic term would produce nontrivial parabolic projective monodromy. Unitary monodromy excludes this term, so the ratio extends meromorphically with local degree $k$. The pullback of the round metric then has precisely the form \eqref{eq:cone-metric}. For $k=1$ the metric extends smoothly and positively. These are the local models used in \cite{CWW+15,Ere20c,Ere24}.

Two developing maps have the same Schwarzian if and only if they differ by a constant M\"obius transformation. Indeed, the local bases of solutions of $y''+Q_z y=0$ differ by a constant invertible matrix, and the same transformation between their ratios extends to the universal cover by analytic continuation. Hence the projective classes are in bijection with the set in Theorem~\ref{thm:unitary}, and the quotient is finite.

Fix one class and a developing map $f$ with scalar unitary lift $\rho$ on $X\setminus D$. Write every other developing map with this Schwarzian as $M\circ f$ and choose a determinant-one representative of $M$. The corresponding positive Hermitian form $H=M^*M$ defines a single-valued metric precisely when
\begin{equation}\label{eq:invariant-forms}
 H=H^*>0,\quad \det H=1,\quad
 \rho(\eta)^*H\rho(\eta)=H\quad(\eta\in\Gamma).
\end{equation}
Conversely such a form is realized by $M=H^{1/2}$. With $\mathbf v_f=(f,1)^{\mathsf t}$, its metric is
\begin{equation}\label{eq:hermitian-metric}
 h_H=\frac{4|f'|^2}{(\mathbf v_f^*H\mathbf v_f)^2}|dz|^2.
\end{equation}
These metrics all have the same local angles. Distinct forms determine distinct metrics: equality in \eqref{eq:hermitian-metric} implies equality of positive Hermitian quadratic polynomials on the open image of a univalent chart, hence equality of their coefficients.

Since $\rho$ is unitary, invariance in \eqref{eq:invariant-forms} means that $H$ commutes with all its matrices. Noncoaxial projective monodromy is equivalent to the absence of a common invariant complex line for this unitary lift. A nonscalar $H$ would have invariant eigenspaces; hence $H=I$ in this case. For nontrivial coaxial monodromy, an axis basis diagonalizes the representation and some matrix has distinct eigenvalues. The invariant forms are precisely $\operatorname{diag}(t,t^{-1})$, $t>0$. For trivial projective monodromy, all scalar matrices are $\pm I$, so every positive determinant-one Hermitian form occurs. The space of these forms is the usual model of $\HH^3$.

The parametrizations are homeomorphisms. The forward map is smooth on compact subsets outside $D$. For the inverse map, we choose four regular points whose developing values give independent evaluations on the real four-dimensional space of Hermitian quadratic forms. Such points exist because a Hermitian polynomial vanishing on an open set is zero. We recover these evaluations from the metric by taking positive square roots in \eqref{eq:hermitian-metric}, and then recover $H$ continuously by solving an invertible real linear system. Finally $h\mapsto Q_h=u_{zz}-u_z^2$ is continuous in the stated topology. Since its image is a finite subset of the accessory space, $Q_h$ is constant along any connected continuous family.
\end{proof}

We next give an obstruction to coaxial monodromy that applies in every genus. We use the character differential of a coaxial metric, as studied in \cite[Theorems~1.4--1.5]{CWW+15}. On the sphere the resulting conditions are the necessary part of Eremenko's coaxial angle theorem \cite[Theorem~1]{Ere20c}; see also \cite{WWX26} for the geometry of reducible metrics.

\begin{proposition}\label{prop:coaxial-obstruction}
Put $J=\{i:\beta_i\notin\Z\}$ and
\begin{equation}\label{eq:coaxial-bound}
 B_g=2-2g-|J|+\sum_{i\notin J}(\beta_i-1).
\end{equation}
If a metric in $\Sph(X,D,\boldsymbol\beta)$ is coaxial, there are signs $\varepsilon_i\in\{1,-1\}$, $i\in J$, for which
\begin{equation}\label{eq:coaxial-signs}
 S_\varepsilon:=\sum_{i\in J}\varepsilon_i\beta_i\in\Z,\quad
 |S_\varepsilon|\le B_g,\quad B_g-S_\varepsilon\in2\Z.
\end{equation}
Consequently, failure of these conditions for every choice of signs implies finiteness of the full metric set.
\end{proposition}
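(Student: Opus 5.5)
The plan is to use the character one-form of a coaxial metric and then apply a degree count on $X$. Suppose $h$ is coaxial, and choose a developing map $f$ whose projective monodromy consists of rotations $w\mapsto e^{it}w$.

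\textbf{Step 1: the character one-form.} The form $\omega=df/f$ is invariant under multiplication of $f$ by unimodular constants, so it descends to a meromorphic one-form on $X$. It is holomorphic and nonvanishing on $X\setminus D$ except at points where $f$ takes the value $0$ or $\infty$; at such points $f$ is locally univalent, so $\omega$ has a simple pole with residue $+1$ or $-1$. We now compute the local behaviour at each $a_i$.

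\textbf{Step 2: local models at the cone points.} At a nonintegral $\beta_i$, the local monodromy is a nontrivial rotation, and the local model from the proof of Proposition~\ref{prop:metric-classes} is $f=z^{\pm\beta_i}a(z)$ with $a(0)\ne0$, after possibly inverting $f$. The inversion is allowed because $w\mapsto1/w$ normalizes the coaxial group. Hence $\omega$ has a simple pole at $a_i$ with residue $\varepsilon_i\beta_i$, for some sign $\varepsilon_i$.

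At an integral $\beta_i=k$, the projective monodromy is trivial and $f$ is meromorphic near $a_i$ with local degree $k$. Then either $f(a_i)\in\{0,\infty\}$ and $\omega$ has a simple pole with residue $\pm k$, or $f(a_i)\in\C^*$ and $\omega$ has a zero of order $k-1$ at $a_i$.

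\textbf{Step 3: counting.} Let $P_\pm$ be the number of poles with residue $\pm1$ lying off $D$. Let $I_0$ be the set of integral marks where $f$ takes the value $0$ or $\infty$, and let $I_1$ be the remaining integral marks. Write $\sum_{I_0}\pm k$ for the sum of the residues $\pm\beta_i$ at the marks in $I_0$. The residue theorem gives
\[
S_\varepsilon+\sum_{I_0}\pm k+P_+-P_-=0,
\]
so $S_\varepsilon\in\Z$. The degree of $\divisor\omega$ is $2g-2$, which gives
\[
\deg(\text{zeros})=2g-2+|J|+|I_0|+P_++P_-.
\]
Zeros of $\omega$ off $D$ occur only at critical points of $f$, and $f$ has none because it is locally univalent. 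Hence the zeros of $\omega$ all lie at the marks in $I_1$, and their total order is $\sum_{I_1}(\beta_i-1)$.

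Using $\sum_{I_1}(\beta_i-1)\le\sum_{i\notin J}(\beta_i-1)$ and $|I_0|\ge0$, we get
\[
|J|+P_++P_-\le 2-2g+\sum_{i\notin J}(\beta_i-1)-\sum_{I_0}(\beta_i-1)-|I_0|.
\]
Rearranging and estimating
\[
|S_\varepsilon|\le P_++P_-+\sum_{I_0}\beta_i
\]
yields exactly $|S_\varepsilon|\le B_g$.

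\textbf{Step 4: parity.} The exact identity obtained in Step 3 differs from $B_g$ by sums of the form $P_+-P_-\equiv P_++P_-$ and $\pm k\equiv k$ modulo $2$, together with the terms $\sum_{I_0}(\beta_i-1)+|I_0|=\sum_{I_0}\beta_i$. This should give $B_g-S_\varepsilon\in2\Z$. The \emph{main obstacle} is the bookkeeping of the marks in $I_0$ and the verification that their contributions are even in total. To handle it, I would first flip the signs at those marks so that they are absorbed into the estimate for $P_\pm$.

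\textbf{Step 5: finiteness.} If the conditions \eqref{eq:coaxial-signs} fail for every choice of signs, then no metric is coaxial, and in particular none has trivial projective monodromy. By Proposition~\ref{prop:metric-classes}, every class is then a single point, and the full set $\Sph(X,D,\boldsymbol\beta)$ is finite.
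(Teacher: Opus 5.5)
Your proposal follows the paper's proof: the character differential $\omega=df/f$, the local models at the cones, the degree count for $\divisor(\omega)$, and the residue theorem. Your identity $B_g=P_++P_-+\sum_{I_0}\beta_i$ is exactly the paper's $B_g=k+\sum_{i\in A}\beta_i$.

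The one thing to correct is Step~4, which is not an obstacle. Your Step~3 display is in fact an equality, because $\sum_{I_1}(\beta_i-1)=\sum_{i\notin J}(\beta_i-1)-\sum_{I_0}(\beta_i-1)$ holds exactly. The residue theorem writes $-S_\varepsilon=\sum_{I_0}r_i+P_+-P_-$, where $r_i=\pm\beta_i$ is the residue at $a_i$. Hence $B_g-S_\varepsilon=2P_++\sum_{I_0}(\beta_i+r_i)$, and each term $\beta_i+r_i\in\{0,2\beta_i\}$ is even.

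So drop the proposed sign flipping. It is not needed, and the residue signs at marks in $I_0$ cannot be flipped individually anyway, since each is forced by whether $f(a_i)$ is $0$ or $\infty$.
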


\begin{proof}
We normalize the developing map so that monodromy acts by rotations about $0,\infty$. Then $\omega=df/f$ descends to a nonzero meromorphic one-form on $X$. At a nonintegral cone it has a simple pole with residue $\pm\beta_i$. At an integral cone it has a simple pole with residue $\pm\beta_i$ if the developing value is an axis point, and otherwise it has a zero of order $\beta_i-1$. For an ordinary marked point, this order is zero. At an unmarked smooth point, zeros and poles of $f$ give simple poles of $\omega$ with residues $\pm1$. By local univalence outside $D$, there are no other zeros or poles.

Let $I$ be the complement of $J$, let $A\subset I$ be the integer marks giving poles, and let $k$ be the number of unmarked poles. The divisor degree $\deg\divisor(\omega)=2g-2$ gives
\[
 \sum_{i\in I\setminus A}(\beta_i-1)=|J|+|A|+k+2g-2,
 \quad B_g=k+\sum_{i\in A}\beta_i.
\]
The residue theorem gives signs $\eta_j\in\{1,-1\}$ such that
\[
 S_\varepsilon=-\sum_{i\in A}\varepsilon_i\beta_i-\sum_{j=1}^k\eta_j.
\]
The right-hand side is an integer whose absolute value is at most $B_g$ and whose parity agrees with that of $B_g$. This proves \eqref{eq:coaxial-signs}. If no choice of signs satisfies these conditions, every metric is noncoaxial, and the final assertion follows from Proposition~\ref{prop:metric-classes}.
\end{proof}

\begin{corollary}\label{cor:nonintegral-metrics}
If every $\beta_i$ is positive and nonintegral and $2g-2+n>0$, then $\Sph(X,D,\boldsymbol\beta)$ is finite.
\end{corollary}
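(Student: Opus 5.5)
The plan is to combine Proposition~\ref{prop:metric-classes} with the coaxial obstruction of Proposition~\ref{prop:coaxial-obstruction}. By Proposition~\ref{prop:metric-classes}, the quotient $\Sph(X,D,\boldsymbol\beta)/{\sim}$ is finite, and every noncoaxial class contains exactly one metric. A class with trivial projective monodromy is in particular coaxial. So it suffices to show that under the hypotheses no metric has coaxial monodromy. Then every class is a single point and the full set is finite.

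Since every $\beta_i$ is nonintegral, we have $J=\{1,\dots,n\}$ and the sum over $i\notin J$ in \eqref{eq:coaxial-bound} is empty. This gives
\[
 B_g=2-2g-n.
\]
The hypothesis $2g-2+n>0$ says exactly that $B_g<0$. The condition $|S_\varepsilon|\le B_g$ in \eqref{eq:coaxial-signs} would then force $0\le|S_\varepsilon|\le B_g<0$, which is impossible for every choice of signs $\varepsilon_i$. Hence the conditions of Proposition~\ref{prop:coaxial-obstruction} fail for all signs. The final assertion of that proposition gives finiteness.

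The one point to check is that Proposition~\ref{prop:coaxial-obstruction} also covers trivial projective monodromy. If the monodromy is trivial, the developing map descends to a meromorphic function $f$. In that case $\omega=df/f$ is still a nonzero meromorphic one-form, and its divisor count is identical. So the same obstruction applies. Alternatively, trivial monodromy would force every local exponent to be integral at each cone, since $f=z^{\beta_i}a(z)$ must be single-valued. This already contradicts nonintegrality whenever $n>0$.

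The case $n=0$ needs separate attention. Then the hypothesis means $g\ge2$, and the set is empty by Gauss--Bonnet: a smooth curvature-one metric would have area $2\pi(2-2g)\le0$. I expect no real obstacle here; the only care needed is confirming that the trivial-monodromy class, which is homeomorphic to $\HH^3$, is excluded by the argument above.
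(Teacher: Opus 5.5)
Your proposal is correct and follows the paper's proof: with $J=\{1,\ldots,n\}$ you get $B_g=2-2g-n<0$, so Proposition~\ref{prop:coaxial-obstruction} excludes every coaxial metric (trivial monodromy included), and Proposition~\ref{prop:metric-classes} then leaves finitely many classes, each a single metric. Your separate Gauss--Bonnet treatment of $n=0$ is harmless but redundant, since there $J=\varnothing$ gives $S_\varepsilon=0$ and $B_g=2-2g<0$, so the same argument already covers that case.
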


\begin{proof}
Here $B_g=2-2g-n<0$, so Proposition~\ref{prop:coaxial-obstruction} excludes every coaxial metric. The conclusion now follows from Proposition~\ref{prop:metric-classes}.
\end{proof}

More generally, the full metric set is finite if and only if it contains no coaxial metric, because each realized coaxial class contains an infinite family. For fixed nontrivial coaxial monodromy, the character differential is unique up to sign within its projective class: the unordered pair of common fixed points is unique, and changes of an axis coordinate have the form $f\mapsto cf$ or $f\mapsto c/f$. They change $df/f$ to itself or its negative. Thus for fixed $X,D,\boldsymbol\beta$ there are only finitely many such character differentials modulo sign.

For the converse construction, we require both the complete divisor pattern described above and purely imaginary periods. Given a meromorphic $\omega$ with exactly those zeros and poles, their indicated real residues, and $\int_\eta\omega\in i\R$ for every closed path avoiding the poles, the function $f=c\exp\int\omega$ has rotational monodromy and yields exactly the prescribed metric. A residue $\pm1$ at an auxiliary unmarked pole gives local degree one, hence a smooth point. Nontrivial monodromy means that at least one period is not in $2\pi i\Z$. This construction and the complete zero and pole requirements are the correspondence of \cite[Theorems~1.4--1.5]{CWW+15}. The condition of nontrivial monodromy is relevant to the finiteness of the differential: when monodromy is trivial, target M\"obius changes can alter $df/f$ continuously while leaving the Schwarzian fixed. On $\PP^1$, for instance, $f_a(z)=z-a$ has constant Schwarzian zero, whereas $df_a/f_a=dz/(z-a)$ varies with $a$.

\begin{remark}[Sphere cases]\label{rem:coaxial-angles}
At $g=0$, \eqref{eq:coaxial-bound}--\eqref{eq:coaxial-signs} recover the original signed-sum and parity exclusions. The developing maps
\[
 f_t(z)=t[z(z-1)]^{1/3},\quad t>0,
\]
give distinct metrics with fixed angle parameters $(1/3,1/3,2,2/3)$ at $(0,1,1/2,\infty)$. Their monodromy consists of rotations by cube roots of unity. They have the same Schwarzian, but their leading metric coefficient at $0$ is $4t^2/9$. This example shows that three nonintegral angles can coexist with a coaxial family when an integral angle is present.

Ignoring ordinary marked points, a smooth sphere has one projective class with metric space $\HH^3$, whereas a sphere with one genuine conical point admits no such metric. For two genuine points, choose coordinates $0,\infty$. The angle parameters must agree, say with value $\beta$, and $Q=(1-\beta^2)/(4z^2)$. The map $z^\beta$ gives the class; it is a positive real ray for nonintegral $\beta$, and $\HH^3$ for integral $\beta\ge2$. More generally, on the sphere one nonintegral angle with all others integral is impossible, while two nonintegral angles force coaxial monodromy: after the integral peripheral monodromies become identity, the remaining two are inverse unitary rotations. On positive-genus curves, all-ordinary marked data give the empty set by Gauss--Bonnet. For the sphere with at least three genuine points and all $0<\beta_i<1$, the stronger uniqueness assertion is already known, see \cite{LT92} and \cite[Theorem~1.9 (ii)]{BGJ+19}.
\end{remark}

We can also express the metric conclusion in terms of the singular Liouville equation. Let $h_0$ be a smooth conformal background metric with area $A_0$ and curvature $K_0$. Use $\Delta_0=\operatorname{div}_{h_0}\operatorname{grad}$, whose nonconstant eigenvalues are negative, and let $\delta_{a_i}^{(0)}$ denote the Dirac measure relative to its area form. The correspondence $h=e^{2u}h_0$ identifies the metric set with solutions, having the prescribed continuous conical remainders, of
\begin{equation}\label{eq:liouville-metric}
 \Delta_0u=K_0-e^{2u}+2\pi\sum_i(\beta_i-1)\delta_{a_i}^{(0)}.
\end{equation}
Away from $D$, this identity follows from the curvature formula, while $\Delta\log|z|=2\pi\delta_0$ accounts for the singular terms. Integrating yields
\begin{equation}\label{eq:area-constraint}
 A_{\boldsymbol\beta}:=\int_Xe^{2u}\,dA_0
 =2\pi\left(2-2g+\sum_i(\beta_i-1)\right)>0.
\end{equation}
Consequently, the set is empty if $A_{\boldsymbol\beta}$ is nonpositive.

For comparison with the normalized mean-field equation, choose $h_0$ to have constant curvature and take mean-zero Green functions $G_i$ satisfying $\Delta_0G_i=\delta_{a_i}^{(0)}-1/A_0$. Put $\sigma=2\pi\sum_i(\beta_i-1)G_i$ and $a_{\boldsymbol\beta}=e^{2\sigma}$. Equation~\eqref{eq:liouville-metric} is equivalent to
\begin{equation}\label{eq:mean-field}
 \Delta_0v+\rho\left(\frac{a_{\boldsymbol\beta}e^v}{\int_Xa_{\boldsymbol\beta}e^v\,dA_0}-\frac1{A_0}\right)=0,
 \quad \int_Xv\,dA_0=0,\quad \rho=2A_{\boldsymbol\beta}.
\end{equation}
Indeed $v=2(u-\sigma)-A_0^{-1}\int_X2(u-\sigma)\,dA_0$, and the inverse formula is
\[
 u=\sigma+\frac v2+\frac12\log\frac{A_{\boldsymbol\beta}}{\int_Xa_{\boldsymbol\beta}e^v\,dA_0}.
\]
These formulas give a bijection for solutions with bounded conical remainders. The finiteness assertions for metrics therefore apply to the corresponding solutions of \eqref{eq:mean-field}; the continuous coaxial freedoms are exactly those in Proposition~\ref{prop:metric-classes}. These normalizations agree with the curvature and mean-field formulations in \cite{Tro91,BDM+11,BGJ+19}.

 
\subsection{Ramification divisors and stable extensions}\label{subsec:extensions}

Let $R=\sum_i m_i a_i$ be an effective integral divisor on $X$, where $m_i>0$ on its support. Write $\Sph(X,R)$ for $\Sph(X,\operatorname{supp}R,\boldsymbol\beta)$ with $\beta_i=m_i+1$. Thus $R$ is the ramification divisor of a developing map, while $D=\operatorname{supp}R$ is the reduced marked set. We use this formulation to relate metric finiteness to the correspondence with stable extensions established by Li--Song--Xu \cite{LSX21}.

\begin{corollary}\label{cor:odd-divisors}
Suppose $g=g(X)\ge1$ and $R$ has odd degree $d>2g-2$. Then
\[
 0<\#\Sph(X,R)<\infty.
\]
\end{corollary}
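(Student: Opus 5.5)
The plan has two parts: existence from the cited analytic results, and finiteness from Proposition~\ref{prop:metric-classes} together with Proposition~\ref{prop:coaxial-obstruction}. All angle parameters $\beta_i=m_i+1\ge2$ are integers, so $J=\emptyset$. Finiteness of $\Sph(X,R)$ therefore reduces to excluding coaxial metrics and metrics with trivial projective monodromy.

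\emph{Existence.} The total angle excess is $\sum_i(\beta_i-1)=d$, so the area constraint \eqref{eq:area-constraint} gives $A_{\boldsymbol\beta}=2\pi(2-2g+d)>0$ once $d>2g-2$. The mean-field parameter is $\rho=2A_{\boldsymbol\beta}=4\pi(2-2g+d)$. I would invoke the existence theorem of Bartolucci--De Marchis--Malchiodi (and Troyanov in the subcritical range) for \eqref{eq:mean-field} with $g\ge1$. Their degree formula is nonzero for $\rho$ avoiding the critical set $8\pi\mathbb N$ when $g\ge1$ and all singular weights are positive integers. Since $d$ is odd, $2-2g+d$ is odd, hence $\rho\in4\pi(2\mathbb N+1)$ is not a multiple of $8\pi$. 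Through the bijection after \eqref{eq:mean-field}, this gives a metric, so $\#\Sph(X,R)>0$. Alternatively, one could cite the Li--Song--Xu correspondence, where odd degree guarantees existence of stable extensions.

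\emph{Exclusion of coaxial and trivial monodromy.} Trivial projective monodromy would make the developing map $f$ a meromorphic function $X\to\PP^1$ with ramification divisor exactly $R$. Riemann--Hurwitz then gives $d=2\deg f+2g-2$, which is even, contradicting oddness. For nontrivial coaxial monodromy, I would use the proof of Proposition~\ref{prop:coaxial-obstruction} with $J=\emptyset$, so $S_\varepsilon=0$. The conditions \eqref{eq:coaxial-signs} then require $B_g\in2\Z$ and $B_g\ge0$. Here $B_g=2-2g+d$, which is odd. Hence no coaxial metric exists.

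With both cases excluded, every metric is noncoaxial. By Proposition~\ref{prop:metric-classes}, each class is then a single point and the classes are finitely many, which gives $\#\Sph(X,R)<\infty$.

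The main obstacle is the existence step. One must check that the cited degree-counting results apply with the present normalization $\rho=2A_{\boldsymbol\beta}$ and that the critical values are indeed avoided exactly when $d$ is odd. If this citation is unclear, I would fall back on Li--Song--Xu's construction of a stable extension for odd $d>2g-2$, which they show produces a noncoaxial metric.
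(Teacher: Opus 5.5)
Your proposal is correct and follows the paper's proof. Parity of $B_g=2-2g+d$ excludes coaxial metrics via Proposition~\ref{prop:coaxial-obstruction}, and Proposition~\ref{prop:metric-classes} then gives finiteness. Existence comes from Troyanov when $2-2g+d=1$ and from Bartolucci--De Marchis--Malchiodi when $2-2g+d\ge3$, since $\rho=4\pi(2-2g+d)$ avoids their exceptional set inside $8\pi\Z$.

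Two small remarks. Your separate Riemann--Hurwitz exclusion of trivial monodromy is correct but redundant: under the paper's definition trivial monodromy is coaxial, so the obstruction already covers it. Also, the cited Bartolucci--De Marchis--Malchiodi result is a variational existence theorem rather than a degree formula, though what you conclude from it is the same.
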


\begin{proof}
All angle parameters are integral. A coaxial metric would have $J=\varnothing$ in Proposition~\ref{prop:coaxial-obstruction}, so $S_\varepsilon=0$ and $B_g=2-2g+d$ would be even. Since $d$ is odd, this is impossible. Thus every metric is noncoaxial, and finiteness follows from Proposition~\ref{prop:metric-classes}.

To prove nonemptiness, we apply existing results on the existence of curvature-one metrics with the curve and every point of $R$ fixed. Put $a=2-2g+d$, a positive odd integer. If $a=1$, Troyanov's Theorem~C \cite{Tro91} applies: its area quantity is $2\pi a=2\pi$, which is strictly less than both $4\pi$ and twice the smallest cone angle $2\pi(m_i+1)\ge4\pi$. If $a\ge3$, the supercritical existence theorem of Bartolucci--De Marchis--Malchiodi applies \cite[Theorem~1.1]{BDM+11}; see also its original author formulation in \cite[Theorems~0.5 and~0.7]{DM10}. In its notation the singularity orders are $m_i>0$, the prescribed curvature is $1$, and the parameter is $\rho=4\pi a>8\pi$. Its exceptional set is
\[
 \left\{8\pi k+8\pi\sum_i n_i(1+m_i)>0:
 k\in\Z_{\ge0},\ n_i\in\{0,1\}\right\}.
\]
Every member is an integral multiple of $8\pi$, whereas $a$ is odd. The nonresonance requirement of that existence theorem is satisfied. Together, these two cases establish nonemptiness.
\end{proof}

Li--Song--Xu proved the conclusion for almost every effective divisor of the indicated odd degree \cite[Corollary~2.3]{LSX21} and discussed whether this restriction could be removed in their Section~6. Our Corollary~\ref{cor:odd-divisors} establishes finiteness for every such divisor. On elliptic curves, this conclusion already follows from Wang's algebraic treatment of the multisource Liouville equation \cite[Theorems~0.1 and~3.5, Corollary~4.2]{Wang21}. In that case, the algebraic length $\tfrac12\prod_i(m_i+1)$ bounds the number of distinct metrics, although special fibers may have multiplicities. Since $m_i+1\le2^{m_i}$, this also gives the bound $2^{d-1}$ for all elliptic curves and all marked configurations of the same odd total degree $d$. On the sphere an odd ramification divisor gives the empty set, since an integral-angle developing map is rational and its total ramification is even.

We distinguish the scalar lift on the punctured surface from a lift on the closed Riemann surface. For integral angles the local projective monodromy is trivial. The scalar lift from Section~\ref{subsec:scalar} has peripheral monodromy $(-1)^{m_i}I$, since its local exponents are $(1\pm(m_i+1))/2$. If $A_j,B_j$ are lifts of the handle monodromies, the surface relation becomes
\begin{equation}\label{eq:odd-lift}
 \prod_{j=1}^g[A_j,B_j]=(-1)^dI.
\end{equation}
Changing signs of the handle lifts leaves this product unchanged. Thus the projective representation extends over $X$ but has a closed-curve $\SU(2)$ lift exactly when $d$ is even. The bundle interpretation in \cite{LSX21} gives the same parity criterion. On an elliptic curve with odd $d$, the lifts anticommute. Their traces vanish and their squares are $-I$, so their projective images generate the Klein four group of half-turns about perpendicular axes. Such monodromy is noncoaxial even though the projective group is commutative.

We now apply the finiteness of metrics to the ramification divisor map. Fix holomorphic line bundles $L_1,L_2$ on $X$ and put $k=\deg L_2-\deg L_1>0$ and
\[
 B_{12}=K_X\otimes L_1^{-1}\otimes L_2.
\]
A rank-two holomorphic bundle $E$ is stable if each of its line subbundles has degree less than $\deg E/2$. We work with projectivized extension classes, so the equivalence includes scalar changes in the identifications of the two endpoint line bundles.

\begin{definition}[The ramification divisor map]\label{def:ramification-map}
Let
\[
 U_{12}=\PP H^1(X,L_1\otimes L_2^{-1})^s
\]
be the open subset of classes of extensions $0\to L_1\to E\to L_2\to0$ with stable middle bundle. Its Hermitian--Einstein projective connection defines a noncoaxial metric, and the derivative of the corresponding section of the projective bundle $\PP(E)$ of lines in $E$ is a nonzero section $\theta_E$ of $B_{12}$. The real-analytic map
\begin{equation}\label{eq:ramification-map}
 \mathscr R_{L_1,L_2}:U_{12}\longrightarrow\PP H^0(X,B_{12}),
 \quad [E]\longmapsto[\theta_E]
\end{equation}
is the ramification divisor map of \cite[Theorems~2.3--2.4]{LSX21}.
\end{definition}

The projective class $[\theta_E]$ determines the zero divisor of its representative. A noncoaxial metric with integral angles comes from a stable extension by \cite[Theorem~2.1]{LSX21}. Tensoring the entire extension by a line bundle leaves the construction unchanged. On the fixed space $U_{12}$, the metric determines the projective extension class uniquely by \cite[Lemma~3.1]{FSX26}.

\begin{proposition}\label{prop:ramification-fibers}
Every fiber of \eqref{eq:ramification-map} is finite. For a nonzero section $\theta\in H^0(X,B_{12})$ with divisor $R$, the fiber is in bijection with the noncoaxial metrics in $\Sph(X,R)$. On each nonempty connected component of $U_{12}$, the differential of $\mathscr R_{L_1,L_2}$ has full real rank on an open dense subset. Its critical set has measure zero, and its image contains a nonempty open subset of $\PP H^0(X,B_{12})$.
\end{proposition}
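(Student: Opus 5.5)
The plan is to combine the Li--Song--Xu correspondence with the injectivity result of Feng--Song--Xu and with Proposition~\ref{prop:metric-classes}. For the first assertion, fix a nonzero $\theta\in H^0(X,B_{12})$ with divisor $R$. A point $[E]$ in the fiber over $[\theta]$ carries a Hermitian--Einstein projective connection, which gives a noncoaxial metric whose developing map has ramification divisor $\divisor(\theta_E)=R$. So we obtain a map from the fiber to the noncoaxial part of $\Sph(X,R)$, and by \cite[Lemma~3.1]{FSX26} it is injective.

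For surjectivity onto the noncoaxial metrics, I would use \cite[Theorem~2.1]{LSX21}. A noncoaxial integral-angle metric with ramification $R$ comes from some stable extension $0\to L_1'\to E\to L_2'\to0$. I then need to show that, after tensoring by a line bundle, one may take $L_1'=L_1$ and $L_2'=L_2$. The constraint $B_{12}'\simeq\mathcal O(R)\simeq B_{12}$ only fixes $L_2'\otimes L_1'^{-1}$, so the bundles themselves must be recovered from the developing map. The intended route is that $L_1'$ is the line subbundle defined by the developing section, so it is determined up to the tensoring ambiguity that leaves the construction unchanged. Since the metric set of $\Sph(X,R)$ is finite by Proposition~\ref{prop:metric-classes}, finiteness of the fiber already follows from the injection alone. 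If matching the endpoint bundles proves delicate, I would therefore state the bijection only onto the subset of metrics whose line bundles are $L_1,L_2$. Pinning down exactly this subset is the step I expect to be the main obstacle.

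For the rank statement, I would work on a connected component $U'$ of $U_{12}$. Its real dimension is $2(h^1(L_1L_2^{-1})-1)$, and by Riemann--Roch and Serre duality this equals $2(h^0(B_{12})-1)$, since $h^1(L_1L_2^{-1})=h^0(B_{12})$. Hence source and target have the same real dimension. The map $\mathscr R_{L_1,L_2}$ is real analytic with finite fibers. If its differential had rank below full everywhere on $U'$, then the maximal rank $\kappa$ would be strictly smaller than the dimension. On the open set where the rank equals $\kappa$, the constant rank theorem gives fibers that are positive-dimensional submanifolds, contradicting finiteness. Therefore full rank holds somewhere on $U'$. The full-rank locus is the complement of the zero set of a real-analytic function, namely the Jacobian determinant in local charts. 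Because $U'$ is connected and this function does not vanish identically, its zero set is nowhere dense and of measure zero. This gives the open dense set and the measure-zero critical set. Finally, the inverse function theorem at a full-rank point shows that the image contains an open subset of $\PP H^0(X,B_{12})$.
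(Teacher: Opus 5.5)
Your finiteness argument and your rank argument are essentially the paper's. For the rank part, this covers the equal dimensions via Serre duality, the use of the maximal rank together with the constant-rank theorem against finite fibers, the nonvanishing real-analytic Jacobian on a connected component, and the inverse function theorem. One small correction: Proposition~\ref{prop:metric-classes} gives finiteness of the \emph{noncoaxial} metrics, not of all of $\Sph(X,R)$, since coaxial classes are infinite. Your injection lands in the noncoaxial part, so the conclusion still holds.

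The genuine gap is the bijection. The statement claims a bijection with \emph{all} noncoaxial metrics in $\Sph(X,R)$. Your fallback of restricting to metrics whose line bundles are $L_1,L_2$ does not prove this. The obstacle you identify is not real: you have it backwards that knowing only $L_2'\otimes L_1'^{-1}$ is insufficient. Tensoring by a line bundle $N$ replaces $(L_1',L_2')$ by $(L_1'\otimes N,L_2'\otimes N)$. So $L_2'\otimes L_1'^{-1}$ is exactly the invariant of the tensoring ambiguity, and it is all you need.

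Concretely, suppose the metric is represented by $0\to L\to E\to M\to0$. Its ramification section is a section of $K_X\otimes L^{-1}\otimes M$ with divisor $R$, so
\[
 K_X\otimes L^{-1}\otimes M\simeq\mathcal O_X(R)\simeq B_{12}.
\]
Tensor the extension by $N=L_1\otimes L^{-1}$. The subline becomes $L_1$, and the quotient becomes
\[
 M\otimes N\simeq L_1\otimes K_X^{-1}\otimes B_{12}\simeq L_2.
\]
Tensoring preserves stability and the projective connection on $\PP(E)$, hence the metric. After fixing identifications of the endpoint bundles, you get a point of $U_{12}$ whose ramification section has divisor $R$. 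On the compact connected curve it therefore differs from $\theta$ by a nonzero constant, so the point lies in the fiber over $[\theta]$.

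This gives surjectivity, and injectivity from \cite[Lemma~3.1]{FSX26} then yields the claimed bijection. The paper adds that no extra identifications arise in $U_{12}$. An equivalence preserving the subline forces $L_1\otimes N\simeq L_1$, hence $N\simeq\mathcal O_X$, and the remaining endpoint rescalings only scale the extension class.
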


\begin{proof}
Every extension in the fiber over $[\theta]$ gives a noncoaxial metric with ramification $R$. By the injectivity for fixed line bundles cited above, distinct points of this fiber correspond to distinct metrics. This set is finite by Proposition~\ref{prop:metric-classes}.

To prove the claimed bijection, we start with any noncoaxial metric with ramification $R$. By the stable-extension correspondence it is represented by $0\to L\to E\to M\to0$. Its ramification section gives
\[
 K_X\otimes L^{-1}\otimes M\simeq\mathcal O_X(R)\simeq B_{12}.
\]
We tensor the extension by $N=L_1\otimes L^{-1}$. The subline becomes $L_1$ and the quotient becomes
\[
 M\otimes N\simeq L_1\otimes K_X^{-1}\otimes B_{12}\simeq L_2.
\]
Tensoring preserves stability and the associated projective metric. After choosing identifications of the endpoint line bundles, we obtain a point of $U_{12}$. Its ramification section has divisor $R$ and hence differs from $\theta$ by a nonzero constant on the compact connected curve. This proves surjectivity onto the metric set, while injectivity for fixed line bundles gives the claimed bijection. Moreover, tensoring does not introduce any additional equivalence within $U_{12}$: an equivalence preserving its subline would require $L_1\otimes N\simeq L_1$, hence $N\simeq\mathcal O_X$, and the remaining endpoint changes only scale the extension class.

By Serre duality,
\[
 h^1(X,L_1\otimes L_2^{-1})=h^0(X,B_{12})=k+g-1,
\]
where the second equality follows from Riemann--Roch and $\deg(L_1\otimes L_2^{-1})<0$. Thus the source and target in \eqref{eq:ramification-map} have the same real dimension $e=2(k+g-2)$ whenever the source is nonempty. On each component of the source, let $r$ denote the maximal rank of the differential. If $r<e$, a point where the rank equals $r$ has a neighborhood on which the rank is constant and equal to $r$: a nonvanishing $r$-minor gives the lower bound and maximality gives the upper bound. The constant-rank theorem would then produce local fibers of positive dimension $e-r$, contradicting finiteness. We conclude that the maximal rank is $e$.

The analytic section $\bigwedge^e d\mathscr R_{L_1,L_2}$ is not identically zero on the component. Analytic continuation shows that its zero set has empty interior, and the standard zero-set theorem for real-analytic functions makes it measure zero \cite{KP02}. Consequently, the locus of full rank is open and dense. By the inverse function theorem, the image contains an open subset. If $e=0$, full rank holds everywhere and the same assertion is immediate.
\end{proof}

The stable locus is nonempty for $g\ge2$ and $k>0$, and for $g=1$ precisely when $k$ is odd and positive \cite[Theorem~2.2]{LSX21}. Stable rank-two bundles on the sphere do not occur. The conclusion about full rank addresses the question concerning rank and open images discussed in \cite[Section~6]{LSX21}.

When $g\ge1$ and $k$ is odd, every target divisor has degree $2g-2+k>2g-2$. Corollary~\ref{cor:odd-divisors} and the fiber identification imply that \eqref{eq:ramification-map} is surjective with nonempty finite fibers. Existence and surjectivity were already established in the theory of stable extensions \cite{LSX21}. For $k=1$, every nonzero extension is stable \cite[Theorem~2.2]{LSX21}. The source is then the compact projective space $\PP^{g-1}$, so the real-analytic map is proper as well as finite-to-one. For other values of $k$, the source is the open stable locus.

We can also formulate this correspondence intrinsically. An integral-angle metric determines a flat unitary $\PP^1$-bundle $P\to X$ and a holomorphic section $s$ not locally constant in flat charts. The derivative of this section has zero divisor $R$. Conversely, pulling back the round fiber metric along such a section gives the metric on $X$. Isomorphisms of pairs are flat unitary bundle isomorphisms over the identity of $X$ carrying one section to the other. The correspondence between these pairs and metrics is bijective; hence, for fixed $R$, only finitely many pairs have noncoaxial monodromy.

If $V_s=s^*T_{P/X}$ is the vertical tangent line along the section, its derivative is a section of $K_X\otimes V_s$ and yields $K_X\otimes V_s\simeq\mathcal O_X(R)$. Identifying $\PSU(2)$ with $\mathrm{SO}(3)$, the section splits the associated oriented real rank-three bundle into its trivial real line and the underlying oriented real plane of $V_s$. Thus its second Stiefel--Whitney class has evaluation
\[
 \langle w_2,[X]\rangle=\deg V_s\pmod2=\deg R\pmod2.
\]
This gives the bundle interpretation of \eqref{eq:odd-lift}. A Hermitian--Einstein rank-two lift can have scalar curvature even when its projective connection is flat; odd ramification is compatible with such a projectively flat bundle.

 
\subsection{Meromorphic functions with prescribed ramification}\label{subsec:meromorphic}

We conclude with the case of trivial projective monodromy and an independent classical algebraic proof. Fix a complete effective ramification divisor $R$ on the compact curve $X$. Two nonconstant meromorphic maps $f_1,f_2:X\to\PP^1$ are equivalent when $f_2=M\circ f_1$ for a target M\"obius transformation $M$. A point of local degree $e_i$ contributes the coefficient $(1-e_i^2)/4$ to the $z^{-2}$ term of $\mathcal S(f)/2$, and the Schwarzian is holomorphic at every other point. It follows from Theorem~\ref{thm:unitary} and the equality of Schwarzians that there are only finitely many such equivalence classes.

We also give an independent proof using the Wronskian. Riemann--Hurwitz fixes the degree $d=\deg f$ by
\begin{equation}\label{eq:riemann-hurwitz}
 \deg R=2g-2+2d.
\end{equation}
The line bundle $L=f^*\mathcal O_{\PP^1}(1)$ satisfies
\begin{equation}\label{eq:ramification-root}
 L^{\otimes2}\simeq\mathcal O_X(R)\otimes K_X^{-1}.
\end{equation}
There are at most $2^{2g}$ such line bundles because two square roots differ by an element of $\Pic^0(X)[2]$, the group of degree-zero line bundles whose square is trivial \cite{For81}. For a fixed $L$, maps modulo target equivalence correspond to basepoint-free two-dimensional subspaces of $H^0(X,L)$.

For independent sections $s,t$ define their Wronskian by $W(s,t)=s\,dt-t\,ds$ in a local frame. The frame-derivative terms cancel, so $W(s,t)$ is a section of $K_X\otimes L^{\otimes2}$. It is nonzero: otherwise the meromorphic ratio $s/t$ would have zero derivative and be constant. The same conclusion holds for pencils with basepoints. Thus the linear Wronskian construction defines a morphism on the entire Grassmannian
\[
 \mathcal W_L:\Gr(2,H^0(X,L))\longrightarrow
 \PP H^0(X,K_X\otimes L^{\otimes2}).
\]
Its pullback of $\mathcal O(1)$ is the ample Pl\"ucker line bundle. A positive-dimensional fiber would contain a projective curve on which this line bundle has both positive degree, by ampleness, and zero degree, as the pullback from a point. Hence every fiber is zero dimensional and finite. Since two sections with the same divisor have constant ratio, the fixed divisor $R$ determines at most one projective section of $K_X\otimes L^{\otimes2}$. Combining the finitely many roots in \eqref{eq:ramification-root} with the finitely many pencils in each Wronskian fiber proves the assertion. If the degree in \eqref{eq:riemann-hurwitz} is not a positive integer, or no suitable pencil exists, the set is empty.

\begin{remark}[Rational functions]\label{rem:rational-functions}
For $X=\PP^1$, this is the familiar finite Wronski map for rational functions with a complete prescribed critical divisor \cite[Section~1]{EG01}\cite[Section~2.2]{Sch02}. Its degree is the Catalan number $\frac1d\binom{2d-2}{d-1}$, which bounds the number of distinct classes and gives the count for a general fiber. These sharper counts are classical. Degree-one maps give one class with no critical points. A map of degree at least two has at least two critical points; if exactly two occur, each has order $d-1$, and after sending them to $0,\infty$ the class is represented by $z^d$. Thus the cases of a sphere with few marked points agree with the projective-connection argument and the metric classification above.
\end{remark}

\bigskip
\noindent\textbf{AI disclosure.}
The author used ChatGPT 6-Astra to explore proof strategies, refine mathematical arguments, and assist with manuscript revisions. The author takes full responsibility for the mathematical content and the final text.

\end{document}